\documentclass[11pt,reqno]{amsart}

\usepackage{amssymb}
\usepackage[margin=1in]{geometry}
\usepackage[hidelinks,bookmarksnumbered]{hyperref}
\usepackage{expl3}

\title{Even-degree Hermitian Ikeda lifts via Fourier--Jacobi descent}
\author[N. Takeda]{Nobuki TAKEDA}
\address{Department of Mathematics, Graduate School of Science, Kyoto University, Kyoto 606-8502, Japan}
\email{takeda.nobuki.z04@kyoto-u.jp}
\subjclass[2020]{Primary 11F55; Secondary 11F70, 11F66}
\keywords{Hermitian Ikeda lifts, Fourier--Jacobi descent, unitary groups, Arthur packets, theta correspondence}

\makeatletter
\let\ams@tocwrite\@tocwrite
\def\@tocwrite#1#2{%
  \ifx\@secnumber\@empty
  \else
    \ams@tocwrite{#1}{#2}%
  \fi
}
\makeatother

\theoremstyle{definition}
\newtheorem{dfn}{Definition}[section]
\newtheorem{rem}[dfn]{Remark}

\theoremstyle{plain}
\newtheorem{thm}[dfn]{Theorem}
\newtheorem{prop}[dfn]{Proposition}
\newtheorem{lem}[dfn]{Lemma}
\newtheorem{cor}[dfn]{Corollary}

\DeclareMathOperator{\tr}{Tr}
\DeclareMathOperator{\Hom}{Hom}
\newcommand{\Std}{\mathrm{Std}}

\ExplSyntaxOn

\cs_new:Npn \makealphabet #1 #2 #3
  {
    \clist_map_inline:nn {#3}
      { \exp_args:Nc \newcommand {#1 ##1} {#2{##1}} }
  }

\makealphabet{bb}{\mathbb}{A,C,Q,R,Z}
\makealphabet{bf}{\mathbf}{a,h,i}
\makealphabet{cal}{\mathcal}{A,D,F,G,I,J,L,M,O,S,T,V,W}
\makealphabet{frak}{\mathfrak}{H,O,S}

\clist_map_inline:nn
  {
    GU,U,GL,Her,Res,SL,Ind,FJ,hol,Lift,
    Asai,sph,Gal,Sym,vol,disc,BC,diag,
    st,Wh,id,irr,adm,ord,pr,spl,AI,std
  }
  {
    \exp_args:Nc \newcommand {#1} {\mathrm{#1}}
  }

\ExplSyntaxOff

\newcommand{\one}{\mathbf{1}}

\renewcommand{\sc}{\mathrm{sc}}
\renewcommand{\dh}{\mathrm{dh}}
\renewcommand{\Im}{\mathrm{Im}}
\renewcommand{\Re}{\mathrm{Re}}

\numberwithin{equation}{section}
\allowdisplaybreaks
\begin{document}
\begin{abstract}
  Let \(E/F\) be a CM extension and let \(m\geq2\) be an even integer.
  Starting from the odd-degree Hermitian Ikeda lift of degree \(m+1\) constructed by Yamana,
  we construct a Hermitian Ikeda lift of degree \(m\) from the theta components of its first Fourier--Jacobi coefficient.
  We determine its global Arthur parameter and its local constituents at every finite place.
  Using the global multiplicity formula for unitary groups,
  we obtain an explicit multiplicity-free decomposition of the resulting representation of \(\U_{m,m}(\bbA_{F,\bfh})\)
  and show that its isomorphism class is independent of the Fourier--Jacobi index.
  When \(F=\bbQ\),
  we compare Fourier coefficients and identify the construction with Ikeda's even-degree Hermitian lift.
\end{abstract}

\maketitle

\tableofcontents

\section{Introduction}

For positive integers \(k\) and \(n\) with \(k\equiv n\pmod 2\),
Ikeda constructed a lift from normalized Hecke eigenforms in \(S_{2k}(\SL_2(\bbZ))\) to Siegel cusp forms of degree \(2n\) and weight \(k+n\),
proving the conjecture of Duke and Imamo\u{g}lu \cite{DukeImamoglu1996Converse,Ikeda2001Siegel}.
He subsequently established a Hermitian analogue over an imaginary quadratic field in both even and odd degrees,
giving explicit formulas for its Fourier coefficients as well as an adelic formulation \cite{Ikeda2008Lifting}.

These lifts have been studied extensively from an arithmetic point of view.
Their periods and related formulas for special values have been investigated in \cite{KatsuradaKawamura2015Period,Katsurada2017HermitianPeriod,Higashitani2026Periods}.
They also play an important role in congruence problems,
in particular those related to Harder's conjecture;
see \cite{Dummigan2017Congruences,AtobeChidaIbukiyamaKatsuradaYamauchi2023Harder}.

The adelic theory was later extended in two directions.
Ikeda and Yamana constructed Hilbert--Siegel lifts over totally real fields \cite{IkedaYamana2020HilbertSiegel},
while Yamana developed a Hermitian lifting over a general CM extension \(E/F\) \cite{Yamana2020HilbertHermitian}.
Yamana's construction produces cuspidal automorphic forms on the quasi-split unitary similitude groups \(\GU_{r,r}\) in odd degree.
As observed in \cite[Introduction, after~(1.3)]{Yamana2020HilbertHermitian},
the corresponding even-degree forms on \(\U_{m,m}\) are expected to arise from the first Fourier--Jacobi coefficient of the odd-degree lift.
The purpose of this paper is to carry out this construction and to determine the automorphic representations generated by the resulting forms.

Let \(m\geq2\) be even.
Starting from the Hermitian Ikeda lift of degree \(m+1\) constructed by Yamana,
we take its first Fourier--Jacobi coefficient and then its theta components.
This produces holomorphic automorphic forms on \(\U_{m,m}\) and gives the natural candidate for the even-degree lift.

At a nonsplit finite place,
the restriction of the local representation of the odd-degree lift from the unitary similitude group to the unitary group may be reducible.
One must therefore determine the Fourier--Jacobi descent of each odd-degree constituent.
We carry this out at every finite place,
including the ramified nonsupercuspidal and dihedral supercuspidal cases.

We prove that the resulting automorphic forms are nonzero and cuspidal,
identify their global Arthur packet using Mok's endoscopic classification,
and determine all finite local constituents.
Combining the local analysis with the global multiplicity formula gives an explicit multiplicity-free decomposition of \(\Pi_{m,S}(h)\).
In particular,
although the construction depends a priori on a positive Fourier--Jacobi index \(S\),
the isomorphism class of the resulting representation of \(G_m(\bbA_\bfh)\) is independent of \(S\).

When \(F=\bbQ\),
the construction recovers the even-degree Hermitian lift in~\cite{Ikeda2008Lifting}.
In this specialization,
we determine the local constituent contributing at full level,
including at primes dividing the discriminant,
compare the Fourier coefficients with the formula in~\cite{Ikeda2008Lifting},
and recover the exceptional vanishing phenomenon in \cite[Corollary~15.21]{Ikeda2008Lifting} from the resulting global compatibility condition.
In particular,
our construction gives a representation-theoretic realization of the Arthur parameter anticipated in \cite[\S~18]{Ikeda2008Lifting}.

We now state the main result.
Let \(h\) be a normalized primitive Hilbert Hecke eigenform of weight \(\kappa\),
and let \(\pi_h\) be the associated automorphic representation of \(\GL_2(\bbA_F)\).
We use the auxiliary characters and weights fixed in Subsection~\ref{sec:automorphic-forms},
and the notation for Arthur parameters and standard \(L\)-functions introduced in Section~\ref{sec:arthur-packets}.
If \(h\) has CM by \(E\) in the sense of Subsection~\ref{subsec:arthur-parameters-lifts},
write \(\pi_h\simeq\AI_{E/F}(\theta)\),
where \(\theta\) is a Hecke character of \(E\),
unique up to conjugation.
For \(S\in F_{>0}\),
Section~\ref{sec:construction} constructs a representation \(\Pi_{m,S}(h)\) of \(G_m(\bbA_\bfh)\) and a \(G_m(\bbA_\bfh)\)-equivariant map \(I_{m,S}^{\kappa}(h;\,\cdot)\) into \(\calS_m^{\ell_m,\eta_m}\).

\begin{thm}
  \label{thm:introduction-main}
  Let \(m\geq2\) be even,
  and suppose that \(\eta_{m+1}\in\bbZ^{\bfa}\).
  Then,
  for every \(S\in F_{>0}\),
  the following assertions hold.
  \begin{enumerate}
    \item The representation \(\Pi_{m,S}(h)\) is nonzero,
          and
          \[
            I_{m,S}^{\kappa}(h;\,\cdot):\Pi_{m,S}(h)\hookrightarrow\calS_m^{\ell_m,\eta_m}
          \]
          is a nonzero \(G_m(\bbA_\bfh)\)-equivariant embedding.

    \item Every irreducible automorphic constituent \(\sigma\) occurring in the image of \(I_{m,S}^{\kappa}(h;\,\cdot)\) belongs to the global Arthur packet attached to
          \[
            \begin{cases}
              (((\vartheta_m)^c)^{-1}\otimes\BC_{E/F}(\pi_h))[m]                           & \text{if }\pi_h\not\simeq\pi_h\otimes\chi_{E/F}, \\
              (((\vartheta_m)^c)^{-1}\theta)[m]\boxplus(((\vartheta_m)^c)^{-1}\theta^c)[m] & \text{if }\pi_h\simeq\AI_{E/F}(\theta),
            \end{cases}
          \]
          and
          \[
            L(s,\sigma,\Std)=\prod_{j=0}^{m-1}L\left(s+\frac{m-1}{2}-j,((\vartheta_m)^c)^{-1}\otimes\BC_{E/F}(\pi_h)\right).
          \]

    \item Every such automorphic representation occurs with multiplicity one,
          and the isomorphism class of \(\Pi_{m,S}(h)\) is independent of \(S\).
  \end{enumerate}
\end{thm}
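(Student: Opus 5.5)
The proof has three stages: a global nonvanishing and cuspidality argument, an identification of the Arthur parameter through unramified computations and Mok's classification, and a local–global multiplicity argument.

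\emph{Stage 1: nonvanishing and cuspidality.} Let \(F\) be Yamana's odd-degree lift on \(\GU_{m+1,m+1}\) (equivalently on \(\U_{m+1,m+1}\) after restriction). Its first Fourier--Jacobi coefficient of index \(S\) is a form on the Jacobi group. It decomposes against the Weil representation \(\omega_S\) of the Heisenberg part, which gives theta components \(I_{m,S}^{\kappa}(h;\phi)\) on \(G_m\).
\begin{itemize}
\item Nonvanishing: I would express the Fourier coefficients of the theta components through the Fourier coefficients of \(F\) at matrices of the form \(\begin{pmatrix} T & *\\ * & S\end{pmatrix}\). Yamana's explicit product formula (a Siegel series times the \(h\)-coefficient) shows that some such coefficient is nonzero.
\item Equivariance: this is formal from the construction in Section~\ref{sec:construction}.
\item Cuspidality: the constant terms of a Fourier--Jacobi coefficient along the maximal parabolics of \(G_m\) are Fourier--Jacobi coefficients of constant terms of \(F\) along the parabolics of \(G_{m+1}\). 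These vanish because \(F\) is cuspidal. Holomorphy and the weight \((\ell_m,\eta_m)\) follow from the archimedean Weil representation. This uses the hypothesis \(\eta_{m+1}\in\bbZ^{\bfa}\).
\end{itemize}

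\emph{Stage 2: the global parameter.} At almost all places, where everything is unramified, the local component of \(F\) is the unramified constituent with Satake parameter determined by \(\pi_{h,v}\) and \(\vartheta_{m+1}\). I would use the unramified Fourier--Jacobi descent computation: the spherical vector maps to a nonzero spherical vector, and the Satake parameters shift by the character \(((\vartheta_m)^c)^{-1}\) coming from the splitting of the Weil representation. Strong multiplicity one for \(\GL_{2m}\) then forces the standard base change of each constituent \(\sigma\) to have the stated \(L\)-function. By Mok's endoscopic classification, \(\sigma\) lies in the packet of the unique parameter with these Satake data. This parameter is the stable one \((\cdots\otimes\BC(\pi_h))[m]\) when \(\BC(\pi_h)\) is cuspidal, and the endoscopic sum \(\theta[m]\boxplus\theta^c[m]\) in the CM case. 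This gives (2).

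\emph{Stage 3: local constituents and multiplicity.} At each finite place, I would restrict the local constituent of Yamana's lift from \(\GU\) to \(\U\); at nonsplit places this restriction may be reducible. I would then compute the Fourier--Jacobi descent of each summand. To do so, I would realize the summands as theta lifts or degenerate principal series constituents. In the ramified nonsupercuspidal cases I would use Mackey theory on the Jacobi restriction. In the dihedral supercuspidal cases I would use theta dichotomy and local Gan--Gross--Prasad uniqueness, which gives multiplicity at most one for the Fourier--Jacobi models.

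The outcome is a description of \(\Pi_{m,S}(h)_v\) as a direct sum of distinct members of the local \(A\)-packet, labelled by characters of the component group. Arthur's multiplicity formula (Mok), applied with the sign character \(\epsilon_\psi\), then shows two things:
\begin{itemize}
\item exactly the global products satisfying the character condition occur, each with multiplicity one;
\item the resulting set of local labels does not depend on \(S\). A change of \(S\) only twists \(\omega_S\) by the quadratic character attached to \(\det S\), which permutes the local summands compatibly with the global parity condition.
\end{itemize}
This gives (3).

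The main obstacle is Stage 3 at nonsplit ramified places. There one must match each \(\U\)-summand of the \(\GU\)-representation with its descent and with its Arthur label, with enough precision for the global sign condition to be checked. I would first handle the dihedral supercuspidal case through explicit theta lifts from \(\U_1\).
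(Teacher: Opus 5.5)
\textbf{Gap in Stage~3: the multiplicity formula cannot tell you which blocks survive.} Stages~1 and~2 essentially match the paper: Fourier coefficients of the theta components are integrals of \(\calW_{S\oplus T}\), the spherical vector descends to a nonzero spherical vector, and then strong multiplicity one and Mok's classification apply. In Stage~3 you let Arthur's multiplicity formula decide which global products occur. That formula describes \(L^2_{\disc}\), not the image of \(I_{m,S}^{\kappa}(h;\,\cdot)\). It tells you that every constituent of the image lies in \(\Pi_\psi(\epsilon_\psi)\) and occurs at most once. It cannot tell you that a given admissible product \(\calD_{m,\infty}^{\hol}\otimes\rho_m(\delta^S)\) is actually produced by theta components of the index-\(S\) Fourier--Jacobi coefficient.

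This matters because \(\widetilde V_{m,S}(h)=\bigoplus_{\delta\in\frakS}\rho_{m+1}(\delta)\otimes\overline{\omega}_{S,\bfh}\), and by the local descent results each block contributes either nothing or \(\rho_m(\delta^S)\). So the isomorphism class of \(\Pi_{m,S}(h)\) is governed exactly by which blocks survive, and independence of \(S\) needs this set on the nose; an upper bound is not enough. In the non-CM case the global component group is trivial, so the formula imposes no condition at all, and survival of every block must come from somewhere else. Conversely, in the CM case with \(\Sigma_1=\varnothing\), some blocks genuinely die: every \(\calW_{S\oplus T}\) with \(T\in\Her_m(F)_{>0}\) vanishes on them, and this is the source of the exceptional vanishing in Theorem~\ref{thm:Q-full-level}. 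Even your upper bound would require matching the \(\rho_{m,v}^{\pm}\), and the archimedean holomorphic component, with Mok's Whittaker-normalized labels, which you do not address.

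\textbf{What is missing: a direct survival criterion.} The paper shows that the block indexed by \(\delta\) has nonzero image if and only if some \(T\in\Her_m(F)_{>0}\) satisfies \(\epsilon_{m+1,v}(S\oplus T)=\delta_v\) for all \(v\in\Sigma_2\). The ``only if'' direction comes from the Fourier expansion in Lemma~\ref{lem:archimedean-FJ-factor}, together with the fact that the constituent indexed by \(\delta_v\) supports degenerate Whittaker functionals only for \(B\) with \(\epsilon_{m+1,v}(B)=\delta_v\) (Proposition~\ref{prop:odd-nonsupercuspidal-structure}(3) and Lemma~\ref{lem:local-dihedral-theta-whittaker}). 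The ``if'' direction comes from taking \(\phi\) in the block with \(\calW_{S\oplus T}(\phi)\neq0\) and \(\Phi=1_X\) for small \(X\).

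Existence of such \(T\) is a local--global problem for norm classes, solved in Lemma~\ref{lem:global-sign-condition}. It is always solvable when \(\Sigma_1\neq\varnothing\): a place of \(\Sigma_1\) absorbs the product-formula constraint, since there the odd-degree constituent supports every nondegenerate \(B\). When \(\Sigma_1=\varnothing\) it is solvable exactly when \(\prod_{v\in\Sigma_2}\delta_v=(-1)^{dm(m+1)/2}\). The resulting explicit set \(\frakS_0\) is stable under \(\delta\mapsto(\delta_v\chi_v(S'/S))_v\), by the product formula and total positivity of \(S'/S\), and this gives independence of \(S\). Mok's formula is then needed only for multiplicity one.

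\textbf{Dihedral case.} Local Gan--Gross--Prasad uniqueness bounds the multiplicity, but it determines neither nonvanishing nor which of the two packet members is reached. Also, the relevant constituents are not theta lifts from \(\U_1\). They are the lifts \(\Theta_{n,v}^{\pm}\) of \(\gamma_v^\natural\circ\det\) from the \(n\)-dimensional Hermitian spaces \(V_{n,v}^{\pm}\). The paper computes \(\FJ_{S_v}(\Theta_{m+1,v}^{\delta})\simeq\Theta_{m,v}^{\delta\chi_v(S_v)}\) directly in the mixed model (Theorem~\ref{thm:local-dihedral-FJ}).
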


More precisely,
at every finite place where the relevant local packet has two members,
Fourier--Jacobi descent sends the constituent indexed by \(\delta_v\) to that indexed by \(\delta_v\chi_v(S_v)\):
via the local intertwining map constructed by Yamana at points where the principal series is reducible,
and via an isomorphism of the full Fourier--Jacobi module at places \(v\) where \(\pi_{h,v}\) is dihedral supercuspidal.
Together with the global multiplicity formula established by Mok,
this gives the independence of \(S\) and,
over \(\bbQ\),
the exceptional vanishing phenomenon.

The paper is organized as follows.
Section~\ref{sec:preliminaries} fixes the notation for unitary groups and automorphic forms,
introduces the auxiliary characters and weights used in the construction,
and recalls Fourier--Jacobi coefficients and their theta decomposition.
Section~\ref{sec:construction} recalls Yamana's odd-degree lift,
constructs the even-degree lift by Fourier--Jacobi descent,
and proves its nonvanishing.
Section~\ref{sec:arthur-packets} recalls the endoscopic classification,
defines the Arthur parameters of the lifts,
computes their unramified local parameters,
and identifies the global packets containing these lifts.
Section~\ref{sec:local-fj-descent} determines the local constituents and their Fourier--Jacobi descent at every finite place.
Section~\ref{sec:finite-part} combines the local results with the global multiplicity formula to obtain the global decomposition of \(\Pi_{m,S}(h)\) and prove its independence of the Fourier--Jacobi index.
Finally,
Section~\ref{sec:Q-comparison} specializes to \(F=\bbQ\) and compares the construction with Ikeda's even-degree Hermitian lift.

\subsection*{Acknowledgements}
The author is grateful to T. Ikeda for his guidance and support.
This work was supported by JSPS KAKENHI Grant Number JP26KJ1378.
\subsection*{Notation}

Throughout the paper,
\(F\) is a totally real field,
\(E/F\) is a CM extension,
and \(c\) is the nontrivial automorphism of \(E/F\).
Write \(\bfa\) and \(\bfh\) for the archimedean and finite places of \(F\).
Write \(\bbA_F\) and \(\bbA_E\) for the adele rings,
and \(\bbA_{F,\bfh}\) and \(\bbA_{E,\bfh}\) for their finite parts.

For a local or adelic vector space \(X\),
write \(\calS(X)\) for its Schwartz--Bruhat space or the corresponding restricted tensor product.
Let \(\chi_{E/F}\) be the quadratic Hecke character associated with \(E/F\).
The symbol \(\one\) denotes the trivial character of the group under consideration.
For a set \(X\),
the symbol \(1_X\) denotes its characteristic function.
We identify the signs \(\pm\) with \(\pm1\).

Write \(F_{>0}\) for the set of totally positive elements of \(F\).
For a character \(\lambda\) of \(E^\times\),
\(E_v^\times\),
or the corresponding idele group,
write \(\lambda^c(x)=\lambda(x^c)\) and \(\lambda^{-c}=(\lambda^c)^{-1}\).
Write \(N_{E/F}\) and \(\tr_{E/F}\) for the norm and trace from \(E\) to \(F\).

For a place \(v\) of \(F\),
let \(F_v\) be the completion of \(F\) at \(v\),
and put \(E_v=E\otimes_F F_v\) and \(\chi_v=\chi_{E_v/F_v}\).
Write \(N_{E_v/F_v}\) and \(\tr_{E_v/F_v}\) for the corresponding local norm and trace.
We also put \(F_\infty=\prod_{v\in\bfa}F_v\) and \(E_\infty=\prod_{v\in\bfa}E_v\).
For \(v\in\bfh\),
write \(\calO_{F_v}\) for the ring of integers of \(F_v\),
and \(\calO_{E_v}\) for its integral closure in \(E_v\).
We use the standard normalized absolute values on local fields.
For \(x=(x_v)_{v\in\bfa}\in F_\infty\),
put \(|x|_\infty=\prod_{v\in\bfa}|x_v|_v\).
For square matrices \(A\) and \(B\),
write \(A\oplus B=\diag(A,B)\) for their block diagonal sum.

For \(x=(x_v)_{v\in\bfa}\in\bbC^{\bfa}\),
put
\[
  e_\infty(x)
  =
  \prod_{v\in\bfa}\exp(2\pi\sqrt{-1}\,x_v).
\]

For an \(F\)-algebra \(R\) and an integer \(a\geq1\),
put
\[
  \Her_a(R)
  =
  \{S\in M_a(E\otimes_F R)\mid S^*=S\},
\]
where \(S^*={}^tS^c\) denotes the conjugate transpose of \(S\).
For \(A\in\GL_a(E\otimes_F R)\) and \(B\in\Her_a(R)\),
put \(B[A]=A^*BA\).
For a place \(v\) of \(F\),
write \(S_v\) for the image of \(S\in\Her_a(F)\) in \(\Her_a(F_v)\),
and put
\[
  \Her_a(F)_{>0}
  =
  \{S\in\Her_a(F)\mid S_v>0\text{ for every }v\in\bfa\}.
\]

\section{Preliminaries}
\label{sec:preliminaries}

\subsection{Unitary groups and parabolic subgroups}

For \(r\geq1\),
put
\[
  J_r=
  \begin{pmatrix}
    0    & 1_r \\
    -1_r & 0
  \end{pmatrix}.
\]
For every \(F\)-algebra \(R\),
define the unitary similitude group \(\widetilde G_r=\GU_{r,r}\) by
\[
  \widetilde G_r(R)
  =
  \left\{
  g\in\GL_{2r}(E\otimes_F R)
  \ \middle|\
  g^*J_rg=\nu_r(g)J_r
  \text{ for some }\nu_r(g)\in R^\times
  \right\}.
\]
The scalar \(\nu_r(g)\) defines the similitude character \(\nu_r:\widetilde G_r\rightarrow\mathbb G_m\).
We define the unitary group \(G_r=\U_{r,r}\) by \(G_r=\ker(\nu_r)\),
so that
\[
  G_r(R)
  =
  \left\{
  g\in\GL_{2r}(E\otimes_F R)
  \ \middle|\
  g^*J_rg=J_r
  \right\}.
\]
We write simply \(\nu\) for \(\nu_r\) when the degree is clear.

For \(A\in\GL_r(E\otimes_F R)\),
\(Z\in\Her_r(R)\),
and \(\xi\in R^\times\),
put
\[
  \begin{aligned}
    m_r(A)   & =\begin{pmatrix}A&0\\0&(A^*)^{-1}\end{pmatrix}, \\
    n_r(Z)   & =\begin{pmatrix}1_r&Z\\0&1_r\end{pmatrix},      \\
    d_r(\xi) & =\begin{pmatrix}1_r&0\\0&\xi1_r\end{pmatrix}.
  \end{aligned}
\]

Write \(P_r\) for the Siegel parabolic subgroup of \(\widetilde G_r\),
whose \(R\)-points are
\[ P_r(R)
  =
  \left\{
  \begin{pmatrix}
    A & B             \\
    0 & \xi(A^*)^{-1}
  \end{pmatrix}
  \,\middle|\,
  A\in\GL_r(E\otimes_F R),\ \xi\in R^\times,\ A^{-1}B\in\Her_r(R)
  \right\}. \]
Its unipotent radical is \(N_r(R)=\{n_r(Z)\mid Z\in\Her_r(R)\}\).

For \(r\geq2\),
let \(Q_{r,1}=L_{r,1}U_{r,1}\) be the maximal parabolic subgroup stabilizing the first isotropic line in \(E^{2r}\).
With respect to
\[
  E^{2r}=E\oplus E^{r-1}\oplus E\oplus E^{r-1},
\]
its Levi subgroup is
\[
  L_{r,1}\simeq \Res_{E/F}\GL_1\times\widetilde G_{r-1}.
\]
If \(g=\begin{pmatrix}\alpha&\beta\\ \gamma&\delta\end{pmatrix}\in\widetilde G_{r-1}(R)\),
write
\[
  l_{r,1}(a,g)
  =
  \begin{pmatrix}
    a & 0      & 0                & 0      \\
    0 & \alpha & 0                & \beta  \\
    0 & 0      & \nu(g)(a^*)^{-1} & 0      \\
    0 & \gamma & 0                & \delta
  \end{pmatrix}.
\]
The unipotent radical consists of
\[
  u_{r,1}(X,Y,t)
  =
  \begin{pmatrix}
    1 & X       & t+XY^* & Y       \\
    0 & 1_{r-1} & Y^*    & 0       \\
    0 & 0       & 1      & 0       \\
    0 & 0       & -X^*   & 1_{r-1}
  \end{pmatrix},
\]
where \(X,Y\in M_{1,r-1}(E\otimes_F R)\) and \(t\in R\).
Its center is given on every \(F\)-algebra \(R\) by
\[
  Z_{r,1}(R)
  =
  \{z_{r,1}(t)=u_{r,1}(0,0,t):t\in R\}.
\]
We use the standard embedding \(\iota_{r,1}:G_{r-1}\hookrightarrow G_r\),
given by \(g\mapsto l_{r,1}(1,g)\).

\subsection{Hermitian modular and automorphic forms}
\label{sec:automorphic-forms}

We fix the classical and adelic realizations of Hermitian automorphic forms used below.

\subsubsection{Classical Hermitian modular forms}

For \(v\in\bfa\),
let
\[
  \frakH_{n,v}
  =
  \left\{
  Z\in M_n(\bbC)
  \ \middle|\
  \frac{Z-Z^*}{2\sqrt{-1}}>0
  \right\},
\]
and put
\[
  \frakH_n^{\bfa}
  =
  \prod_{v\in\bfa}\frakH_{n,v}.
\]
We write
\[
  \bfi_n
  =
  (\sqrt{-1}\,1_n)_{v\in\bfa}
\]
for the standard base point.

Let \(\GU^+(n,n)=\{g\in\GU(n,n)(\bbR):\nu(g)>0\}\),
and put \(\widetilde G_{n,\infty}^+=\prod_{v\in\bfa}\GU^+(n,n)\).
If \(g=\left(\begin{matrix} A_v & B_v \\ C_v & D_v
\end{matrix}\right)_{v\in\bfa}\),
then \(g\) acts on \(\frakH_n^{\bfa}\) by
\[
  g\langle Z\rangle
  =
  \left(
  (A_vZ_v+B_v)(C_vZ_v+D_v)^{-1}
  \right)_{v\in\bfa}.
\]

For \(\ell=(\ell_v)_{v\in\bfa}\in\bbZ^{\bfa}\) and \(g\in\widetilde G_{n,\infty}^+\),
define the automorphy factor
\[
  j_\ell(g,Z)
  =
  \prod_{v\in\bfa}
  \det(C_vZ_v+D_v)^{\ell_v}.
\]
For \(\eta=(\eta_v)_{v\in\bfa}\in\bbZ^{\bfa}\),
define the character \(\varepsilon^\eta:E_\infty^\times\rightarrow\bbC^\times\) by
\[
  \varepsilon^\eta(z)
  =
  \prod_{v\in\bfa}
  \left(\frac{z_v}{|z_v|}\right)^{\eta_v}.
\]
For a function \(F:\frakH_n^{\bfa}\rightarrow\bbC\) and \(g\in\widetilde G_{n,\infty}^+\),
define the slash action by
\[
  (F\vert_{\ell,\eta}g)(Z)
  =
  \varepsilon^\eta(\det g)\,
  j_\ell(g,Z)^{-1}
  F(g\langle Z\rangle).
\]

Let \(\widetilde G_n^+(F)\) be the subgroup of \(\widetilde G_n(F)\) consisting of elements with totally positive similitude factor.

\begin{dfn}
  Let \(\Gamma\subset\widetilde G_n^+(F)\) be an arithmetic subgroup.
  A classical Hermitian modular form of weight \((\ell,\eta)\) for \(\Gamma\) is a holomorphic function \(F:\frakH_n^{\bfa}\rightarrow\bbC\) satisfying
  \[
    F\vert_{\ell,\eta}\gamma=F
  \]
  for all \(\gamma\in\Gamma\).
  We denote the space of such forms by \(M_{\ell,\eta}(\Gamma)\).
  A form is called cuspidal if its Fourier expansion at every cusp is supported on positive-definite Hermitian matrices.
  The corresponding subspace is denoted by \(S_{\ell,\eta}(\Gamma)\).
\end{dfn}

The same notation will be used for arithmetic subgroups of \(G_n(F)\),
with the action obtained by restriction from \(\widetilde G_n^+(F)\).

\subsubsection{Adelic Hermitian automorphic forms}

Write \(\widetilde G_n^+(\bbA)\) for the subgroup of \(\widetilde G_n(\bbA)\) with positive archimedean similitude factors.
Then \(\widetilde G_n^+(F)=\widetilde G_n(F)\cap\widetilde G_n^+(\bbA)\),
and
\[
  \widetilde G_n^+(F)\backslash\widetilde G_n^+(\bbA)
  \simeq
  \widetilde G_n(F)\backslash\widetilde G_n(\bbA).
\]
We use this identification without further comment.
Fix open compact subgroups \(\widetilde K_\bfh\subset\widetilde G_n(\bbA_\bfh)\) and \(K_\bfh\subset G_n(\bbA_\bfh)\).
For the rest of this subsection,
use the notation
\[
  (\calG(\bbA),\calG(F),\calG_\infty,K_{\calG,\bfh})
  =
  \begin{cases}
    (\widetilde G_n^+(\bbA),\widetilde G_n^+(F),\widetilde G_{n,\infty}^+,\widetilde K_\bfh)
     & (\text{in the similitude case}), \\
    (G_n(\bbA),G_n(F),G_n(F_\infty),K_\bfh)
     & (\text{in the unitary case}).
  \end{cases}
\]
Put \(K_{\calG,\infty}=\{k\in\calG_\infty\mid k\langle\bfi_n\rangle=\bfi_n\}\).

\begin{dfn}
  A Hermitian automorphic form on \(\calG(\bbA)\) of weight \((\ell,\eta)\) and level \(K_{\calG,\bfh}\) is a smooth function \(f:\calG(F)\backslash\calG(\bbA)\rightarrow\bbC\) satisfying
  \[
    f(gk_\bfh k_\infty)
    =\varepsilon^\eta(\det k_\infty)j_\ell(k_\infty,\bfi_n)^{-1}f(g)
  \]
  for \(k_\bfh\in K_{\calG,\bfh}\) and \(k_\infty\in K_{\calG,\infty}\),
  together with the following holomorphy condition.
  For \(x\in\calG(\bbA_\bfh)\),
  choose \(g_\infty\in\calG_\infty\) such that \(Z=g_\infty\langle\bfi_n\rangle\),
  and define the corresponding classical component by
  \[
    f_x(Z)
    =\varepsilon^\eta(\det g_\infty)^{-1}
    j_\ell(g_\infty,\bfi_n)f(xg_\infty).
  \]
  This is independent of the choice of \(g_\infty\).
  Put \(\Gamma_x=\{\gamma\in\calG(F)\mid\gamma_\bfh\in xK_{\calG,\bfh}x^{-1}\}\).
  We require \(f_x\in M_{\ell,\eta}(\Gamma_x)\) for every \(x\in\calG(\bbA_\bfh)\).
  We denote the resulting space by \(\calA_{\calG}^{\ell,\eta}(K_{\calG,\bfh})\),
  which means \(\widetilde{\calA}_n^{\ell,\eta}(\widetilde K_\bfh)\) in the similitude case and \(\calA_n^{\ell,\eta}(K_\bfh)\) in the unitary case.
\end{dfn}

\begin{dfn}
  Let \(f\in\calA_{\calG}^{\ell,\eta}(K_{\calG,\bfh})\).
  For every proper \(F\)-parabolic subgroup \(P\) of the corresponding algebraic group,
  write \(N_P\) for its unipotent radical.
  The form \(f\) is cuspidal if
  \[
    \int_{N_P(F)\backslash N_P(\bbA)}f(ug)\,du=0
  \]
  for every such \(P\) and every \(g\) in the corresponding adelic group.
  We denote the resulting subspace by \(\calS_{\calG}^{\ell,\eta}(K_{\calG,\bfh})\),
  which means \(\widetilde{\calS}_n^{\ell,\eta}(\widetilde K_\bfh)\) in the similitude case and \(\calS_n^{\ell,\eta}(K_\bfh)\) in the unitary case.
\end{dfn}

Choose representatives \(x_1,\ldots,x_h\in\calG(\bbA_\bfh)\) for the decomposition
\[
  \calG(\bbA)
  =\coprod_{i=1}^h
  \calG(F)x_iK_{\calG,\bfh}\calG_\infty,
\]
and put
\[
  \Gamma_{x_i}
  =
  \{\gamma\in\calG(F)\mid\gamma_\bfh\in x_iK_{\calG,\bfh}x_i^{-1}\}.
\]
The map \(f\mapsto(f_{x_i})_{i=1}^h\) induces the classical--adelic isomorphisms
\[
  \begin{aligned}
    \calA_{\calG}^{\ell,\eta}(K_{\calG,\bfh})
     & \xrightarrow{\sim}
    \bigoplus_{i=1}^h M_{\ell,\eta}(\Gamma_{x_i}), \\
    \calS_{\calG}^{\ell,\eta}(K_{\calG,\bfh})
     & \xrightarrow{\sim}
    \bigoplus_{i=1}^h S_{\ell,\eta}(\Gamma_{x_i}).
  \end{aligned}
\]
The inverse sends \((F_i)_{i=1}^h\) to the unique adelic form satisfying
\[
  f(\gamma x_i k_\bfh g_\infty)
  =
  (F_i\vert_{\ell,\eta}g_\infty)(\bfi_n)
\]
for \(\gamma\in\calG(F)\),
\(k_\bfh\in K_{\calG,\bfh}\),
and \(g_\infty\in\calG_\infty\).

Put
\[
  \begin{aligned}
    \widetilde{\calS}_n^{\ell,\eta}
     & =\bigcup_{\widetilde K_\bfh}\widetilde{\calS}_n^{\ell,\eta}(\widetilde K_\bfh), \\
    \calS_n^{\ell,\eta}
     & =\bigcup_{K_\bfh}\calS_n^{\ell,\eta}(K_\bfh).
  \end{aligned}
\]
These spaces carry natural right actions of \(\widetilde G_n(\bbA_\bfh)\) and \(G_n(\bbA_\bfh)\),
respectively.
Restriction from \(\widetilde G_n^+(\bbA)\) to \(G_n(\bbA)\) induces
\[
  \widetilde{\calS}_n^{\ell,\eta}(\widetilde K_\bfh)
  \rightarrow
  \calS_n^{\ell,\eta}\left(\widetilde K_\bfh\cap G_n(\bbA_\bfh)\right).
\]

For \(f\in\calS_{\calG}^{\ell,\eta}(K_{\calG,\bfh})\) and \(x\in\calG(\bbA_\bfh)\),
the classical component \(f_x\) has the Fourier expansion
\[
  f_x(Z)
  =\sum_{B\in\Her_n(F)_{>0}}
  a(f_x,B)e_\infty(\tr(BZ)).
\]
This defines the Fourier coefficients \(a(f_x,B)\) used below.

We now fix the global data used in the construction.
Let \(h\) be a normalized primitive Hilbert Hecke eigenform of weight \(\kappa=(\kappa_v)_{v\in\bfa}\in\bbZ^{\bfa}\),
let \(\pi_h=\bigotimes_v'\pi_{h,v}\) be the automorphic representation of \(\GL_2(\bbA)\) generated by \(h\),
and write \(\omega_h=\prod_v\omega_{h,v}\) for its central character.
We regard an integer as the corresponding parallel weight vector whenever it is added to an element of \(\bbZ^{\bfa}\).
Choose a unitary Hecke character \(\chi_-:E^\times\backslash\bbA_E^\times\rightarrow\bbC^\times\) whose restriction to \(\bbA^\times\) is \(\chi_{E/F}\).
Choose also a unitary Hecke character \(\vartheta^+:E^\times\backslash\bbA_E^\times\rightarrow\bbC^\times\) whose restriction to \(\bbA^\times\) is \(\omega_h\),
and put \(\vartheta^-=\vartheta^+\chi_-^{-1}\).
For every place \(v\),
write \(\chi_{-,v}\) and \(\vartheta_v^\pm\) for the local components of \(\chi_-\) and \(\vartheta^\pm\),
respectively.

For a Hecke character \(\lambda\) of \(E\),
write \(\ell_\infty(\lambda)=(k_v)_{v\mid\infty}\) if
\[
  \lambda_\infty((z_v)_{v\mid\infty})=\prod_{v\mid\infty}\left(\frac{z_v}{|z_v|}\right)^{k_v}.
\]
For \(n\geq1\),
put
\begin{align*}
  \vartheta_n & =\begin{cases}\vartheta^+ & (n\text{ odd}),\\ \vartheta^- & (n\text{ even}),\end{cases} \\
  \ell_n      & =\kappa+n-1,                                                                            \\
  \eta_n      & =\frac{\ell_n+\ell_\infty(\vartheta_n)}{2}.
\end{align*}
For every place \(v\),
write \(\vartheta_{n,v}\) for the localization of \(\vartheta_n\).

\subsection{Fourier--Jacobi coefficients and theta decomposition}

For the adelic theta decomposition underlying Fourier--Jacobi coefficients,
see \cite{Ikeda1994Jacobi}.
The corresponding theta decomposition for Fourier--Jacobi expansions of Hermitian modular forms is described in \cite[\S~6]{Ikeda2008Lifting}.
We follow the normalization of the Weil representation and theta kernel in \cite[\S~2]{Yamana2020HilbertHermitian};
the theta components used below are those of \cite[Proposition~10.4]{Yamana2020HilbertHermitian}.

Let \(\psi_{\bbQ}\) be the standard additive character of \(\bbQ\backslash\bbA_{\bbQ}\) whose archimedean component is \(x\mapsto\exp(2\pi\sqrt{-1}\,x)\),
and put \(\psi=\psi_{\bbQ}\circ\tr_{F/\bbQ}\).
Write \(\psi_v\) for its local component.

We only use the first Fourier--Jacobi coefficient.
Fix \(r\geq2\) and \(S\in F_{>0}\),
and retain the splitting character \(\chi_-\) fixed above.

The unipotent radical \(U_{r,1}\) of \(Q_{r,1}\) is a Heisenberg group over \(F\),
with center
\[
  Z_{r,1}
  =
  \{z_{r,1}(t)=u_{r,1}(0,0,t):t\in\mathbb G_a\}.
\]
The index \(S\) determines the central character
\[
  \psi_S(z_{r,1}(t))=\psi(St)
\]
of \(Z_{r,1}(F)\backslash Z_{r,1}(\bbA)\).
For every place \(v\),
the Stone--von Neumann theorem gives a unique irreducible representation \(\rho_{S_v}\) of \(U_{r,1}(F_v)\),
up to isomorphism,
on which
\[
  \rho_{S_v}(z_{r,1}(t))
  =
  \psi_v(S_vt)\,\mathrm{id}.
\]
We write
\[
  \rho_S={\bigotimes_v}'\rho_{S_v}
\]
for the corresponding adelic Heisenberg representation.
Conjugation through \(\iota_{r,1}:G_{r-1}\hookrightarrow G_r\) preserves \(U_{r,1}\) and fixes its center pointwise.
Using the splitting character \(\chi_-\),
the representations \(\rho_{S_v}\) extend to the local Weil representations \(\omega_{S_v}^{\chi_{-,v}}\) of \(U_{r,1}(F_v)\rtimes G_{r-1}(F_v)\),
with the normalization of \cite[\S~2, especially~(2.5)--(2.6)]{Yamana2020HilbertHermitian}.
Put
\[
  \omega_S^{\chi_-}
  =
  {\bigotimes_v}'\omega_{S_v}^{\chi_{-,v}},
\]
and
\[
  \omega_{S,\bfh}^{\chi_-}
  =
  {\bigotimes_{v\in\bfh}}'\omega_{S_v}^{\chi_{-,v}}.
\]
Thus \(\omega_S^{\chi_-}|_{U_{r,1}(\bbA)}=\rho_S\).
We use the Schr\"odinger realization of \(\rho_S\) on \(L^2(M_{1,r-1}(\bbA_E))\),
and realize \(\omega_S^{\chi_-}\) on the Schwartz--Bruhat space \(\calS(M_{1,r-1}(\bbA_E))\).
We abbreviate \(\omega_S^{\chi_-}\) to \(\omega_S\) when the splitting character is fixed.

We fix the Haar measures used in the Fourier--Jacobi construction as follows.
For every place \(v\),
let \(dt_v\) be the self-dual Haar measure on \(F_v\) with respect to \((s,t)\mapsto\psi_v(st)\).
For \(a\geq1\),
let \(d\mu_{S,v}^{(a)}\) be the self-dual Haar measure on \(M_{1,a}(E_v)\) with respect to \((x,y)\mapsto\psi_v\!\left(\tr_{E_v/F_v}(S_vxy^*)\right)\).
Put
\[
  dt=\prod_vdt_v,
\]
and use the induced quotient measure on \(F\backslash\bbA\).

Via the coordinates \(u_{r,1}(X,Y,t)\),
give \(U_{r,1}(F_v)\) the Haar measure
\[
  du_v
  =
  d\mu_{S,v}^{(r-1)}(X)\,
  d\mu_{S,v}^{(r-1)}(Y)\,
  dt_v.
\]
Put \(du=\prod_vdu_v\),
and use the induced quotient measure on \(U_{r,1}(F)\backslash U_{r,1}(\bbA)\).
For
\[
  Y_{r,1}(F_v)
  =
  \{u_{r,1}(0,Y,0):Y\in M_{1,r-1}(E_v)\},
\]
we use \(d\mu_{S,v}^{(r-1)}(Y)\) and the corresponding quotient measure on \(Y_{r,1}(F_v)\backslash U_{r,1}(F_v)\).

For \(v\in\bfa\) and \(x\in M_{1,r-1}(E_v)\),
put
\[
  \Phi_{S,v}^{\circ}(x)
  =
  \exp(-2\pi S_vxx^*).
\]
For \(\Phi\in\calS(M_{1,r-1}(\bbA_{E,\bfh}))\),
put
\[
  \Phi_S
  =
  \Phi\otimes
  \bigotimes_{v\in\bfa}\Phi_{S,v}^{\circ}.
\]
Following the normalization of the theta kernel in~\cite[\S~2]{Yamana2020HilbertHermitian},
define
\[
  \Theta_S(u,g;\Phi_S)
  =
  \sum_{x\in M_{1,r-1}(E)}
  (\omega_S(u,g)\Phi_S)(x).
\]

\begin{dfn}\label{def:first-FJ-coefficient}
  Let \(f\in\calS_r^{\ell,\eta}(K)\).
  Its first Fourier--Jacobi coefficient of index \(S\) is
  \begin{equation}\label{eq:FJ-coefficient}
    \FJ_S(f)(u,g)
    =
    \int_{F\backslash\bbA}
    f(z_{r,1}(t)u\iota_{r,1}(g))\,
    \overline{\psi(St)}\,dt
  \end{equation}
  for \(u\in U_{r,1}(\bbA)\) and \(g\in G_{r-1}(\bbA)\).
  For \(\Phi\in\calS(M_{1,r-1}(\bbA_{E,\bfh}))\),
  define its theta component,
  in the adelic normalization of~\cite[Proposition~10.4]{Yamana2020HilbertHermitian},
  by
  \[
    \calT_{S,\Phi}(f)(g)
    =
    \int_{U_{r,1}(F)\backslash U_{r,1}(\bbA)}
    f(u\iota_{r,1}(g))\,
    \overline{\Theta_S(u,g;\Phi_S)}\,du.
  \]
\end{dfn}

\begin{rem}
  We use ``first Fourier--Jacobi coefficient'' for the Jacobi form \(\FJ_S(f)\) and ``theta component'' for \(\calT_{S,\Phi}(f)\).
  Yamana calls the theta component a Fourier--Jacobi coefficient.
\end{rem}

\begin{prop}
  \label{prop:fourier-jacobi-level-weight}
  Let \(f\in\calS_r^{\ell,\eta}(K)\).
  Suppose that \(\eta-(\ell_\infty(\chi_-)+1)/2\in\bbZ^{\bfa}\).
  Then
  \[
    \calT_{S,\Phi}(f)
    \in
    \calS_{r-1}^{\,\ell-1,\,
      \eta-(\ell_\infty(\chi_-)+1)/2},
  \]
  where \(\ell-1=(\ell_v-1)_{v\in\bfa}\).
  Moreover,
  for every \(k\in G_{r-1}(\bbA_\bfh)\) satisfying \(\iota_{r,1}(k)\in K\) and \(\omega_{S,\bfh}(k)\Phi=\Phi\),
  we have
  \[
    \calT_{S,\Phi}(f)(gk)=\calT_{S,\Phi}(f)(g)
  \]
  for all \(g\in G_{r-1}(\bbA)\).
\end{prop}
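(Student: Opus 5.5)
The proof has four parts: left invariance and right level invariance from the Weil representation, the archimedean weight computed at the base point, holomorphy read off classically, and cuspidality inherited from \(f\).

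\textbf{Left invariance.} For \(\gamma\in G_{r-1}(F)\), the element \(\iota_{r,1}(\gamma)\in G_r(F)\) normalizes \(U_{r,1}(F)\). The theta kernel satisfies
\[
\Theta_S(\gamma u\gamma^{-1},\gamma g;\Phi_S)=\Theta_S(u,g;\Phi_S),
\]
because \(\omega_S\) is a representation of \(U_{r,1}(\bbA)\rtimes G_{r-1}(\bbA)\) and the theta distribution \(\Phi\mapsto\sum_{x}\Phi(x)\) is invariant under \(U_{r,1}(F)\rtimes G_{r-1}(F)\) (Poisson summation, with the splitting \(\chi_-\) trivial on \(F\)-points). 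Combined with the left \(G_r(F)\)-invariance of \(f\), the substitution \(u\mapsto\iota_{r,1}(\gamma)u\iota_{r,1}(\gamma)^{-1}\), which preserves the quotient measure \(du\), gives
\[
\calT_{S,\Phi}(f)(\gamma g)=\calT_{S,\Phi}(f)(g).
\]

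\textbf{Level invariance.} Take \(k\in G_{r-1}(\bbA_\bfh)\) with \(\iota_{r,1}(k)\in K\) and \(\omega_{S,\bfh}(k)\Phi=\Phi\). Write
\[
u\,\iota_{r,1}(gk)=\iota_{r,1}(k)^{-1}\cdot(\iota_{r,1}(k)u\iota_{r,1}(k)^{-1})\cdot\iota_{r,1}(gk),
\]
so that \(f(u\iota_{r,1}(gk))=f(u\iota_{r,1}(g))\) after moving \(k\) to the right, using \(\iota_{r,1}(k)\in K\). In the theta kernel, use
\[
\omega_S(u,gk)\Phi_S=\omega_S(u,g)\,\omega_S(1,k)\Phi_S=\omega_S(u,g)\Phi_S.
\]
Here I would check carefully that the group law in \(U_{r,1}\rtimes G_{r-1}\) matches the order of the arguments in \(\Theta_S(u,g)\).

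\textbf{Archimedean weight.} At \(v\in\bfa\), the Gaussian \(\Phi^\circ_{S,v}\) is an eigenvector of the Weil representation restricted to the stabilizer \(K_{r-1,v}\) of \(\bfi_{r-1}\). It transforms by \(\det(k)^{-1/2}\)-type factors twisted by \(\chi_{-,v}\); concretely, it generates the one-dimensional \(K_{r-1,v}\)-type
\[
\varepsilon^{-(\ell_\infty(\chi_-)+1)/2}(\det k)\,j_1(k,\bfi)^{-1}.
\]
This is the lowest \(K\)-type of the Weil representation in the normalization of Yamana (2.5)–(2.6). Taking complex conjugates in the integrand, the automorphy factor of \(f\) restricted to \(\iota_{r,1}(K_{r-1,v})\) gives the weight
\[
(\ell-1,\ \eta-(\ell_\infty(\chi_-)+1)/2).
\]
Two observations are needed here: the factor \(j_\ell(\iota(k),\bfi_r)\) equals \(j_\ell(k,\bfi_{r-1})\) times a unit-modulus factor in the first coordinate, and that first coordinate is absorbed because \(\iota_{r,1}(k)\) acts trivially on it. The integrality hypothesis ensures that the resulting \(\eta\) lies in \(\bbZ^{\bfa}\).

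\textbf{Holomorphy.} Pass to classical components. By Ikeda's theta decomposition (Ikeda 2008, \S6), the Fourier–Jacobi coefficient of \(f_x\) at the corner \(\tau\) equals
\[
\sum_\Phi \phi_\Phi(Z')\,\theta_\Phi(Z',w),
\]
with holomorphic \(\phi_\Phi\), and \(\calT_{S,\Phi}(f)_x\) is one of these \(\phi_\Phi\). Equivalently, the Fourier expansion
\[
\calT_{S,\Phi}(f)_x(Z')=\sum_{B}a(B)\,e_\infty(\tr(BZ')),
\]
obtained by unfolding the integral over \(N_{r-1}\), has coefficients that are finite linear combinations of the \(a(f_x,\cdot)\) at matrices \(\begin{pmatrix}S&*\\ *&B+\ast\end{pmatrix}>0\). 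This follows from Yamana, Proposition 10.4. Positivity of those matrices forces
\[
B-S^{-1}\alpha^*\alpha>0,
\]
hence \(B>0\), which also shows that the \(N_{r-1}\)-constant term vanishes.

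\textbf{Cuspidality and the main obstacle.} For general proper parabolics of \(G_{r-1}\), I expect the main obstacle to be showing vanishing along the other maximal parabolics, not just the Siegel one. I would handle it by a standard unfolding: integrating \(\calT_{S,\Phi}(f)\) over \(N_P(F)\backslash N_P(\bbA)\) and unfolding the theta series along the associated parabolic of \(G_r\) expresses the result through a constant term of \(f\) along a proper parabolic of \(G_r\). That constant term is zero by the cuspidality of \(f\). Alternatively, it suffices to show the holomorphic form has positive-definite Fourier support at every cusp, which follows from the coefficient relation above applied at every \(x\).
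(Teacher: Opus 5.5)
Your proposal follows essentially the same route as the paper. Holomorphy and the archimedean weight are deferred to Yamana's Proposition~10.4. Cuspidality comes from positive-definite Fourier support at every cusp; your ``alternative'' is exactly the paper's argument, so the unfolding along other parabolics is not needed. Right invariance comes from right \(K\)-invariance of \(f\) together with \(\omega_{S,\bfh}(k)\Phi=\Phi\). Your direct check of left \(G_{r-1}(F)\)-invariance is a correct addition that the paper leaves inside the citation.

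Two written steps need correcting, though neither affects the approach.

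In the level step, the identity \(u\,\iota_{r,1}(gk)=\iota_{r,1}(k)^{-1}\bigl(\iota_{r,1}(k)u\,\iota_{r,1}(k)^{-1}\bigr)\iota_{r,1}(gk)\) is false, since the right side equals \(u\,\iota_{r,1}(k^{-1}gk)\). Just write \(u\,\iota_{r,1}(gk)=u\,\iota_{r,1}(g)\iota_{r,1}(k)\) and use \(\iota_{r,1}(k)\in K\), as the paper does.

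In the weight step, the sign of the \(K\)-type is reversed. Suppose \(\omega_S(k)\Phi_S=\varepsilon^{a}(\det k)\,j_1(k,\bfi_{r-1})^{-1}\Phi_S\) for \(k\) in the archimedean stabilizer of \(\bfi_{r-1}\). Since \(|j_1(k,\bfi_{r-1})|=1\) there, \(\overline{\Theta_S}\) contributes \(\varepsilon^{-a}(\det k)\,j_1(k,\bfi_{r-1})\). Combining this with the factor \(\varepsilon^{\eta}(\det k)\,j_\ell(k,\bfi_{r-1})^{-1}\) from \(f\) gives weight \((\ell-1,\eta-a)\). The stated conclusion therefore needs \(a=+(\ell_\infty(\chi_-)+1)/2\). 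The \(K\)-type you wrote has \(a=-(\ell_\infty(\chi_-)+1)/2\) and would give \(\eta+(\ell_\infty(\chi_-)+1)/2\) instead.
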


\begin{proof}
  The archimedean Fourier--Jacobi calculation in~\cite[Proposition~10.4]{Yamana2020HilbertHermitian} with \(a=1\) shows that \(\calT_{S,\Phi}(f)\) is holomorphic and has the stated weight.
  Since \(f\) is cuspidal,
  the Fourier expansion of \(\calT_{S,\Phi}(f)\) at each cusp is supported on positive-definite indices.
  Hence \(\calT_{S,\Phi}(f)\) is also cuspidal.
  Let \(k\in G_{r-1}(\bbA_\bfh)\) satisfy \(\iota_{r,1}(k)\in K\) and \(\omega_{S,\bfh}(k)\Phi=\Phi\).
  The right \(K\)-invariance of \(f\) gives
  \[
    f(u\iota_{r,1}(gk))
    =f(u\iota_{r,1}(g)\iota_{r,1}(k))
    =f(u\iota_{r,1}(g)).
  \]
  Since \(\omega_{S,\bfh}(k)\Phi=\Phi\),
  the definition of the theta function gives
  \[
    \Theta_S(u,gk;\Phi_S)
    =\Theta_S(u,g;\omega_S(k)\Phi_S)
    =\Theta_S(u,g;\Phi_S).
  \]
  Substituting these identities into the defining integral for \(\calT_{S,\Phi}(f)\) proves the asserted right invariance.
\end{proof}

\section{Construction of the even-degree Hermitian Ikeda lift}
\label{sec:construction}

\subsection{The odd-degree lift constructed by Yamana}

We recall the odd-degree Hermitian Ikeda lift constructed in~\cite[Theorem~1.1 and \S\S~3--10]{Yamana2020HilbertHermitian}.

Retain the form \(h\),
the representation \(\pi_h\),
and the auxiliary characters fixed in Subsection~\ref{sec:automorphic-forms}.
Write \(\pi_{h,\bfh}=\bigotimes_{v\in\bfh}'\pi_{h,v}\) for the finite part of \(\pi_h\).

For the remainder of this section,
fix an odd integer \(r\) and assume that \(\eta_r\in\bbZ^{\bfa}\).
When a Hecke character is evaluated on finite ideles,
we use the same symbol for its finite part.
Write \(\pi_{h,E,\bfh}\) for the finite part of the base change of \(\pi_h\) to \(\GL_2(\bbA_E)\).

Write \(\widetilde P_r\) for the standard parabolic subgroup of \(\widetilde G_r\) with Levi subgroup \((\Res_{E/F}\GL_2)^{(r-1)/2}\times\widetilde G_1\),
and denote its modulus character on \(\widetilde P_r(\bbA_\bfh)\) by \(\delta_{\widetilde P_r}\).

Following Yamana~\cite[Introduction and \S~4]{Yamana2020HilbertHermitian},
let \(\Pi_r(h)\) denote,
in our notation,
the unique irreducible subrepresentation of
\[
  \Ind_{\widetilde P_r(\bbA_\bfh)}^{\widetilde G_r(\bbA_\bfh)}
  \left(\delta_{\widetilde P_r}^{-1/4}
  \otimes
  \{
  (((\vartheta^+)^c)^{-1}\otimes\pi_{h,E,\bfh})^{\boxtimes(r-1)/2}
  \boxtimes
  ((\vartheta^+)^{-1}\boxtimes\pi_{h,\bfh})
  \}\right).
\]

We use the terminology and normalization in~\cite[Definitions~3.1--3.2]{Yamana2020HilbertHermitian}.
For each finite place \(v\) and nondegenerate \(B_v\in\Her_r(F_v)\),
let
\[
  \calW_{B_v,v}:
  \Pi_{r,v}(h)\longrightarrow\bbC
\]
denote the local Shalika functional attached to \(B_v\) constructed in \cite[\S\S~4 and~7]{Yamana2020HilbertHermitian}.
For \(B\in\Her_r(F)_{>0}\),
with localization \(B_v\) at \(v\),
put
\[
  \calW_B
  =
  \bigotimes_{v\in\bfh}\calW_{B_v,v}:
  \Pi_r(h)\longrightarrow\bbC.
\]
Then
\[
  \calW_B
  (\Pi_r(h)(n_r(z)d_r(\xi)m_r(A))\phi)
  =
  \varepsilon^{-\ell_\infty(\vartheta^+)}(\det A)\,
  \psi(\tr(Bz))\,
  \calW_{\xi^{-1}B[A]}(\phi)
\]
for \(z\in\Her_r(\bbA_\bfh)\),
\(\xi\in F_{>0}\),
\(A\in\GL_r(E)\),
and \(\phi\in\Pi_r(h)\).

For a nonzero vector \(\phi\in \Pi_r(h)\),
set
\[
  \widetilde K_r(\phi)
  =\{k\in\widetilde G_r(\bbA_\bfh)\mid \Pi_r(h)(k)\phi=\phi\},
\]
which is an open compact subgroup of \(\widetilde G_r(\bbA_\bfh)\).
Put
\[
  K_r(\phi)
  =\widetilde K_r(\phi)\cap G_r(\bbA_\bfh).
\]

For \(\widetilde g_\bfh\in\widetilde G_r(\bbA_\bfh)\) and \(Z\in\frakH_r^{\bfa}\),
define
\[
  J_r^\kappa(h;\phi)_{\widetilde g_\bfh}(Z)
  =
  \sum_{B\in\Her_r(F)_{>0}}
  |\det B|_\infty^{\ell_r/2}\,
  \calW_B(\Pi_r(h)(\widetilde g_\bfh)\phi)\,
  e_\infty(\tr(BZ)).
\]

Write \(I_r^\kappa(h;\phi)\) for the restriction of \(J_r^\kappa(h;\phi)\) to \(G_r(\bbA)\).

The following is the lifting theorem in~\cite[Theorem~1.1]{Yamana2020HilbertHermitian},
rewritten in our notation.
Yamana denotes our \(\Pi_r(h)\) by \(\Pi_f\),
and denotes our \(\calW_B\) by \(\gimel_B^{\widehat\chi_f}\) in~\cite[(1.2)]{Yamana2020HilbertHermitian},
where \(\widehat\chi_f\) corresponds to \(\vartheta^+_\bfh\) in our notation.

\begin{thm}
  \label{thm:odd-degree-lift}
  For odd \(r\),
  the series \(J_r^\kappa(h;\phi)_{\widetilde g_\bfh}\) converges for every \(\widetilde g_\bfh\),
  and defines an element of \(\widetilde{\calS}_r^{\ell_r,\eta_r}(\widetilde K_r(\phi))\).
  The map \(J_r^\kappa(h;\,\cdot):\Pi_r(h)\hookrightarrow\widetilde{\calS}_r^{\ell_r,\eta_r}\) is \(\widetilde G_r(\bbA_\bfh)\)-equivariant,
  while \(I_r^\kappa(h;\,\cdot):\Pi_r(h)\rightarrow\calS_r^{\ell_r,\eta_r}\) is \(G_r(\bbA_\bfh)\)-equivariant.
  For \(\phi\in \Pi_r(h)\),
  \(g_\bfh\in G_r(\bbA_\bfh)\),
  and \(B\in\Her_r(F)_{>0}\),
  \[
    a(I_r^\kappa(h;\phi)_{g_\bfh},B)
    =
    |\det B|_\infty^{\ell_r/2}\,
    \calW_B(\Pi_r(h)(g_\bfh)\phi).
  \]
\end{thm}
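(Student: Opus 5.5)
This statement is Yamana's lifting theorem \cite[Theorem~1.1]{Yamana2020HilbertHermitian} rewritten in our normalizations. The plan is therefore to reduce each assertion to the corresponding statement there, via a dictionary between the two sets of notation, rather than to reprove the lifting. The dictionary is the one indicated above: \(\Pi_r(h)\leftrightarrow\Pi_f\), \(\calW_B\leftrightarrow\gimel_B^{\widehat\chi_f}\), and \(\vartheta^+_\bfh\leftrightarrow\widehat\chi_f\). I will check that under this dictionary the induced representation defining \(\Pi_r(h)\), including the shift \(\delta_{\widetilde P_r}^{-1/4}\) and the twist by \(((\vartheta^+)^c)^{-1}\), agrees with Yamana's. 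I will also check that his weight matches our \((\ell_r,\eta_r)\) with \(\ell_r=\kappa+r-1\) and \(\eta_r=(\ell_r+\ell_\infty(\vartheta^+))/2\). The hypothesis \(\eta_r\in\bbZ^{\bfa}\) is exactly what is needed for \(\varepsilon^{\eta_r}\) to be a genuine character.

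The first step is convergence and modularity of \(J_r^\kappa(h;\phi)_{\widetilde g_\bfh}\). Yamana proves that the classical series with coefficients \(|\det B|_\infty^{\ell_r/2}\calW_B(\cdot)\) converges and is a Hermitian cusp form of weight \((\ell_r,\eta_r)\). I will take this from his Theorem~1.1.

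Next I will assemble the classical components into an adelic form. The displayed transformation law of \(\calW_B\) under \(n_r(z)d_r(\xi)m_r(A)\) shows that the family \(\widetilde g_\bfh\mapsto J_r^\kappa(h;\phi)_{\widetilde g_\bfh}\) satisfies the compatibilities required by the classical--adelic isomorphism. Replacing \(\widetilde g_\bfh\) by \(\gamma\widetilde g_\bfh\) with \(\gamma\in\widetilde P_r^+(F)\) corresponds to the slash action by \(\gamma_\infty\); here the factor \(\varepsilon^{-\ell_\infty(\vartheta^+)}(\det A)\) is absorbed by \(\varepsilon^{\eta_r}\) and \(j_{\ell_r}\). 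Right translation by \(\widetilde K_r(\phi)\) leaves everything fixed.

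Invariance under the full group \(\widetilde G_r^+(F)\), beyond the parabolic, is the substantive part of Yamana's theorem. It comes from realizing the lift as a residue or Fourier--Jacobi-type construction, and I will cite it rather than reprove it. This gives an element of \(\widetilde{\calS}_r^{\ell_r,\eta_r}(\widetilde K_r(\phi))\).

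Equivariance is then formal. By construction, \(J_r^\kappa(h;\Pi_r(h)(y)\phi)_{\widetilde g_\bfh}=J_r^\kappa(h;\phi)_{\widetilde g_\bfh y}\), which is right translation by \(y\).

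Injectivity follows from two facts. First, \(\Pi_r(h)\) is irreducible. Second, the map is nonzero, because some \(\calW_B\) is nonzero on \(\Pi_r(h)\): the local Shalika functionals are nonzero and a global \(B\) with prescribed local classes exists.

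The statements for \(I_r^\kappa\) follow by restricting to \(G_r(\bbA)\), using the restriction map \(\widetilde{\calS}_r\to\calS_r\) recorded earlier. The Fourier coefficient formula is read off directly by taking \(\widetilde g_\bfh=g_\bfh\in G_r(\bbA_\bfh)\) and comparing with the definition of \(a(f_x,B)\).

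The main obstacle is bookkeeping in the normalization dictionary. One must check that Yamana's archimedean character conventions and his choice of \(\widehat\chi_f\) produce exactly the factor \(\varepsilon^{-\ell_\infty(\vartheta^+)}(\det A)\) and the weight \(\eta_r\), and not their conjugates. I would first verify this on the diagonal elements \(m_r(A)\) with \(A\) scalar, by comparing the central characters on both sides.
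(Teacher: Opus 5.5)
Your route is the paper's: the paper gives no independent argument and simply quotes \cite[Theorem~1.1]{Yamana2020HilbertHermitian} through the dictionary you describe (\(\Pi_r(h)\leftrightarrow\Pi_f\), \(\calW_B\leftrightarrow\gimel_B^{\widehat\chi_f}\), \(\widehat\chi_f\leftrightarrow\vartheta^+_\bfh\)), so your equivariance, injectivity and restriction remarks are consistent extras. One caution if you actually run the \(\widetilde P_r^+(F)\)-compatibility check: the \(n_r(z)\) and \(m_r(A)\) cases work as you say, since the archimedean factors combine to \(\varepsilon^{\ell_r-2\eta_r}(\det A)=\varepsilon^{-\ell_\infty(\vartheta^+)}(\det A)\), but with the slash operator as printed, which carries no power of \(\nu\), one gets \(J_r^\kappa(h;\phi)_{\widetilde g_\bfh}\vert_{\ell_r,\eta_r}d_r(\xi)^{-1}=\prod_{v\in\bfa}\xi_v^{r\ell_{r,v}/2}\,J_r^\kappa(h;\phi)_{d_r(\xi)\widetilde g_\bfh}\), so on the similitude group the normalization needs an extra factor such as \(\nu(g)^{r\ell_r/2}\) in the slash action (this factor is invisible on \(G_r\), where \(\nu=1\), and does not affect the \(I_r^\kappa\)-assertions).
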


\begin{rem}
  If \(\phi=\bigotimes_v'\phi_v\in \Pi_r(h)\),
  \(\pi_{h,v}\) and \(\vartheta_v^+\) are unramified,
  and \(\phi_v\) is spherical,
  then \(K_r(\phi)_v=G_r(\calO_{F_v})\).
\end{rem}

Unlike the map \(J_r^\kappa(h;\,\cdot)\),
\(I_r^\kappa(h;\,\cdot)\) need not be injective.

\subsection{Fourier--Jacobi construction}

The discussion in~\cite[Introduction, after~(1.3)]{Yamana2020HilbertHermitian} suggests that the even-degree forms should arise from the first Fourier--Jacobi coefficient of the degree \(m+1\) lift.
Over \(\bbQ\), the corresponding Fourier--Jacobi expansion and theta decomposition are already present in Ikeda's construction; see \cite[\S~6]{Ikeda2008Lifting}.
The even-degree case over a general CM extension is not treated by Yamana,
owing to the more intricate local representation theory in even degree and the possible reducibility of the restriction of \(\Pi_{m+1}(h)\) to the unitary group.
In this section,
we carry out the proposed construction by extracting the theta components.

Fix an even integer \(m\geq2\),
and put \(r=m+1\).
Recall that \(\vartheta^-|_{\bbA^\times}=\omega_h\chi_{E/F}\).
For \(\phi\in \Pi_{m+1}(h)\),
Theorem~\ref{thm:odd-degree-lift} gives
\[
  I_{m+1}^{\kappa}(h;\phi)
  \in
  \calS_{m+1}^{\ell_{m+1},\eta_{m+1}}(K_{m+1}(\phi)),
\]

Since \(m\) is even,
we have \(\ell_m=\kappa+m-1\) and \(\eta_m=(\ell_m+\ell_\infty(\vartheta^-))/2\).
For every archimedean place \(v\),
the restriction of \(\chi_{-,v}\) to \(F_v^\times=\bbR^\times\) is the sign character,
so every component of \(\ell_\infty(\chi_-)\) is odd.
Hence
\[
  \eta_{m+1}-\eta_m=\frac{1+\ell_\infty(\chi_-)}{2}\in\bbZ^{\bfa},
\]
and therefore the standing assumption \(\eta_{m+1}\in\bbZ^{\bfa}\) already implies \(\eta_m\in\bbZ^{\bfa}\).

Fix \(S\in F_{>0}=\Her_1(F)_{>0}\).
Let \(\overline{\calS}(M_{1,m}(\bbA_{E,\bfh}))\) be the conjugate complex vector space of \(\calS(M_{1,m}(\bbA_{E,\bfh}))\),
and let \(\overline{\omega}_{S,\bfh}\) be the conjugate Weil representation on it.
For notational simplicity,
we use \(\Phi\) both for a Schwartz function and for the corresponding vector in this conjugate space.
Set \(\widetilde V_{m,S}(h)=\Pi_{m+1}(h)\otimes\overline{\omega}_{S,\bfh}\).
For \(\phi\in \Pi_{m+1}(h)\) and \(\Phi\in\calS(M_{1,m}(\bbA_{E,\bfh}))\),
define
\begin{align*}
  \widetilde I_{m,S}^{\kappa}(h;\phi\otimes\Phi)(g)
   & = \calT_{S,\Phi}\!\left(I_{m+1}^{\kappa}(h;\phi)\right)(g) \\
   & = \int_{U_{m+1,1}(F)\backslash U_{m+1,1}(\bbA)}
  I_{m+1}^{\kappa}(h;\phi)(u\,\iota_{m+1,1}(g))\,
  \overline{\Theta_S(u,g;\Phi_S)}\,du
\end{align*}
for \(g\in G_m(\bbA)\).
The integral is linear in \(\phi\) and conjugate-linear in \(\Phi\),
hence linear on the conjugate Weil representation.
It therefore defines a map
\[
  \widetilde I_{m,S}^{\kappa}(h;\,\cdot):\widetilde V_{m,S}(h)\longrightarrow\calS_m^{\ell_m,\eta_m}.
\]

For \(g_\bfh\in G_m(\bbA_\bfh)\),
define
\[
  g_\bfh\cdot(\phi\otimes\Phi)
  =\Pi_{m+1}(h)(\iota_{m+1,1}(g_\bfh))\phi
  \otimes\overline{\omega}_{S,\bfh}(g_\bfh)\Phi.
\]

\subsection{Fourier coefficients and nonvanishing}

For \(a=m\),
abbreviate
\[
  d\mu_{S,v}=d\mu_{S,v}^{(m)},
  \ \text{and}\
  d\mu_{S,\bfh}=\prod_{v\in\bfh}d\mu_{S,v}.
\]
For a measurable subset \(X\subset M_{1,m}(\bbA_{E,\bfh})\),
write
\[
  \vol_{S,\bfh}(X):=\int_X d\mu_{S,\bfh}.
\]

\begin{lem}
  \label{lem:archimedean-FJ-factor}
  For \(w\in\widetilde V_{m,S}(h)\) and \(g_\bfh\in G_m(\bbA_\bfh)\),
  \begin{equation}
    \label{eq:even-Fourier-expansion-J}
    \widetilde I_{m,S}^{\kappa}(h;w)_{g_\bfh}(Z)
    =
    c_{m,S,\infty}
    \sum_{T\in\Her_m(F)_{>0}}
    |\det T|_\infty^{(\ell_m+1)/2}\,
    \calJ_T^S(g_\bfh\cdot w)
    e_\infty(\tr(TZ)),
  \end{equation}
  where \(c_{m,S,\infty}\in\bbC^\times\) is a nonzero constant depending only on \(m\),
  \(S\),
  and the fixed archimedean normalizations.
  For \(T\in\Her_m(F)_{>0}\),
  the functional \(\calJ_T^S\) is given on pure tensors by
  \begin{equation}
    \label{eq:global-finite-FJ-functional}
    \calJ_T^S(\phi\otimes\Phi)
    =
    \int_{M_{1,m}(\bbA_{E,\bfh})}
    \overline{\Phi(x)}
    \calW_{S\oplus T}
    (\Pi_{m+1}(h)(u_{m+1,1}(x,0,0))\phi) d\mu_{S,\bfh}(x).
  \end{equation}
  The integral is absolutely convergent and defines a linear functional on \(\widetilde V_{m,S}(h)\).
\end{lem}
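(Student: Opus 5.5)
We plan to compute the classical Fourier expansion of \(\widetilde I_{m,S}^{\kappa}(h;\phi\otimes\Phi)\) directly from the Fourier expansion of \(I_{m+1}^{\kappa}(h;\phi)\) given by Theorem~\ref{thm:odd-degree-lift}, and then to unfold the theta integral. By \(G_m(\bbA_\bfh)\)-equivariance of the construction (the map \(w\mapsto g_\bfh\cdot w\) intertwines right translation by \(g_\bfh\)), it suffices to treat \(g_\bfh=1\), and by linearity a pure tensor \(w=\phi\otimes\Phi\). Write \(f=I_{m+1}^{\kappa}(h;\phi)\).

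The first step is to decompose \(U_{m+1,1}(F)\backslash U_{m+1,1}(\bbA)\) through the coordinates \(u_{m+1,1}(X,Y,t)\). We integrate first over the center \(t\in F\backslash\bbA\), which replaces \(f\) by its Fourier--Jacobi coefficient \(\FJ_S(f)\), because \(\Theta_S\) transforms under \(z_{m+1,1}(t)\) by \(\psi(St)\). Next we unfold the theta series \(\Theta_S(u,g;\Phi_S)=\sum_{\xi\in M_{1,m}(E)}(\omega_S(u,g)\Phi_S)(\xi)\) against the \(X(F)\)-periodicity of \(\FJ_S(f)\). In the Schr\"odinger model, \(u_{m+1,1}(0,Y,0)\) acts by a character and \(u_{m+1,1}(x,0,0)\) acts by translation. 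The sum over \(\xi\) therefore combines with the quotient \(X(F)\backslash X(\bbA)\) into an integral over \(x\in M_{1,m}(\bbA_E)\). What remains is an integral over \(Y(F)\backslash Y(\bbA)\) of \(f(u_{m+1,1}(x,Y,0)\iota(g))\), and this picks out the Fourier coefficients of \(f\) whose \((1,\ast)\)-off-diagonal block vanishes. Concretely, we insert the expansion \(f(n(z)\cdots)=\sum_B a(f,B)\psi(\tr(Bz))\) and observe that \(u_{m+1,1}(x,0,0)\iota(n_m(Z))\) lies in the Siegel parabolic, acting on \(B\) via \(m_{m+1}\) of a unipotent matrix. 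The \(Y\)-integration then forces \(B\) to have the shape \((S\oplus T)[\,\cdot\,]\). Hence only indices \(B=S\oplus T\) with \(T\in\Her_m(F)_{>0}\) survive, after a change of variables absorbing the rational part of \(x\) into the conjugation \(B\mapsto B[A]\) using the transformation law of \(\calW_B\) stated before Theorem~\ref{thm:odd-degree-lift}.

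The second step separates archimedean and finite places. At \(v\in\bfa\), with \(\Phi_{S,v}^\circ\) Gaussian, the integral over \(x_v\) of \(e_\infty(\tr((S\oplus T)Z'))\), where \(Z'\) is the block matrix with upper-left entry \(\bfi\), against \(\overline{\omega_{S,v}\Phi^\circ_{S,v}}\) is a Gaussian integral. It yields \(e_\infty(\tr(TZ))\) times a constant depending only on \(m,S\). This is exactly the computation of \cite[Proposition~10.4]{Yamana2020HilbertHermitian} with \(a=1\), which Proposition~\ref{prop:fourier-jacobi-level-weight} already invokes. The power \(|\det(S\oplus T)|_\infty^{\ell_{m+1}/2}=|S|_\infty^{\ell_{m+1}/2}|\det T|_\infty^{(\ell_m+1)/2}\) accounts for the exponent, since \(\ell_{m+1}=\ell_m+1\), and \(|S|_\infty^{\ell_{m+1}/2}\) is absorbed into \(c_{m,S,\infty}\). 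At finite places, the remaining integral over \(x\in M_{1,m}(\bbA_{E,\bfh})\) of \(\overline{\Phi(x)}\calW_{S\oplus T}(\Pi_{m+1}(h)(u_{m+1,1}(x,0,0))\phi)\) against \(d\mu_{S,\bfh}\) is precisely \eqref{eq:global-finite-FJ-functional}. The measure normalizations fixed before Definition~\ref{def:first-FJ-coefficient} make the quotient-volume factors equal to \(1\).

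The main obstacle is justifying the exchange of the Fourier sum, the theta sum and the integrals, together with absolute convergence of \(\calJ_T^S\). For the finite integral, \(\Phi\) has compact support, and \(x\mapsto\calW_{S\oplus T}(\Pi(u(x,0,0))\phi)\) is locally constant. It is bounded on the support because \(\phi\) is fixed by an open compact subgroup, so the integral reduces to a finite sum. For the global interchange, we use that \(f\) is a cusp form, hence rapidly decreasing on Siegel domains, and that the Fourier series of Theorem~\ref{thm:odd-degree-lift} converges absolutely and locally uniformly. Together with the Gaussian decay of \(\Phi^\circ_{S,v}\), this gives absolute convergence of the combined sum and integral, and Fubini then justifies every step.
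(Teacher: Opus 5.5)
Your proposal is correct and is essentially the paper's argument. The paper's proof simply quotes \cite[Proposition~10.4]{Yamana2020HilbertHermitian} (with $i=1$) for the unfolding you carry out by hand, and then evaluates the archimedean Gaussian factor explicitly. After your unfolding, the $t$- and $Y$-integrals already force the index to be exactly $S\oplus T$, so your extra ``change of variables absorbing the rational part of $x$'' is not needed.

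You should add the explicit evaluation $\int_{\bbC^m}e^{-4\pi S_vxx^*}\,d\mu_{S,v}(x)=2^{-m}$, or at least note that the integrand is positive. The lemma asserts $c_{m,S,\infty}\neq0$, but your sketch only says that the constant depends on $m$ and $S$.
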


\begin{proof}
  By linearity,
  it suffices to take \(w=\phi\otimes\Phi\).
  Specializing~\cite[Proposition~10.4]{Yamana2020HilbertHermitian} to \(i=1\),
  there is a constant \(c_{m,S,\infty}^{\circ}\in\bbC^\times\),
  independent of \(T\),
  \(\phi\),
  \(\Phi\),
  and \(g_\bfh\),
  such that
  \begin{equation}
    \label{eq:archimedean-FJ-coefficient}
    a(\widetilde I_{m,S}^{\kappa}(h;\phi\otimes\Phi)_{g_\bfh},T)
    =
    c_{m,S,\infty}^{\circ}
    |\det T|_\infty^{1/2}
    \calJ_T^S(g_\bfh\cdot(\phi\otimes\Phi))
    \prod_{v\in\bfa}\calA_v(S_v,T_v).
  \end{equation}
  Here
  \[
    \calA_v(S_v,T_v)
    =
    S_v^{\ell_{m+1,v}/2}
    |\det T_v|^{\ell_{m,v}/2}
    e^{-2\pi S_v}
    \int_{\bbC^m}
    e^{-4\pi S_vxx^*}\,d\mu_{S,v}(x).
  \]
  After identifying \(E_v\simeq\bbC\),
  \(d\mu_{S,v}\) is the self-dual measure for
  \[
    (x,y)\longmapsto
    \exp(2\pi\sqrt{-1}\tr_{\bbC/\bbR}(S_vxy^*) ).
  \]
  If \(dx_v\) denotes the standard Lebesgue measure on \(\bbC^m\),
  then
  \[
    d\mu_{S,v}(x)=(2S_v)^m\,dx_v,
  \]
  and hence
  \[
    \int_{\bbC^m}
    e^{-4\pi S_vxx^*}\,d\mu_{S,v}(x)
    =
    (2S_v)^m(4S_v)^{-m}
    =
    2^{-m}.
  \]
  Therefore
  \[
    \calA_v(S_v,T_v)
    =
    2^{-m}
    S_v^{\ell_{m+1,v}/2}
    e^{-2\pi S_v}
    |\det T_v|^{\ell_{m,v}/2}.
  \]
  Since \(\ell_{m+1,v}=\ell_{m,v}+1\),
  substituting this into \eqref{eq:archimedean-FJ-coefficient} and taking the product over \(v\in\bfa\) shows that there is a constant \(c_{m,S,\infty}\in\bbC^\times\),
  independent of \(T\),
  \(w\),
  and \(g_\bfh\),
  such that
  \[
    a(\widetilde I_{m,S}^{\kappa}(h;w)_{g_\bfh},T)
    =
    c_{m,S,\infty}
    |\det T|_\infty^{(\ell_m+1)/2}
    \calJ_T^S(g_\bfh\cdot w).
  \]
  This is equivalent to \eqref{eq:even-Fourier-expansion-J}.
\end{proof}

Every form in the image of \(\widetilde I_{m,S}^{\kappa}(h;\,\cdot)\) is holomorphic and cuspidal,
so its classical components have only positive-definite Fourier indices.
Since \(c_{m,S,\infty}\neq0\),
Lemma~\ref{lem:archimedean-FJ-factor} gives
\[
  \ker(\widetilde I_{m,S}^{\kappa}(h;\,\cdot))
  =
  \left\{w\in\widetilde V_{m,S}(h)\ \middle|\
  \calJ_T^S(g_\bfh\cdot w)=0
  \text{ for all }g_\bfh\in G_m(\bbA_\bfh),\
  T\in\Her_m(F)_{>0}\right\}.
\]
In particular,
\(\ker(\widetilde I_{m,S}^{\kappa}(h;\,\cdot))\) is \(G_m(\bbA_\bfh)\)-stable.

Define
\[
  \Pi_{m,S}(h)
  =
  \widetilde V_{m,S}(h)/
  \ker(\widetilde I_{m,S}^{\kappa}(h;\,\cdot)).
\]
The \(G_m(\bbA_\bfh)\)-action and the functionals \(\calJ_T^S\) descend to \(\Pi_{m,S}(h)\);
we use the same notation for the induced action and functionals.
The map \(\widetilde I_{m,S}^{\kappa}(h;\,\cdot)\) induces a map
\[
  I_{m,S}^{\kappa}(h;\,\cdot):
  \Pi_{m,S}(h)\longrightarrow
  \calS_m^{\ell_m,\eta_m}.
\]
For \(\phi_m\in \Pi_{m,S}(h)\),
we call \(I_{m,S}^{\kappa}(h;\phi_m)\) the even-degree Hermitian Ikeda lift associated with \(h\).

\begin{prop}
  \label{prop:even-nonvanishing}
  For every \(S\in F_{>0}\),
  the representation \(\Pi_{m,S}(h)\) is nonzero and
  \[
    I_{m,S}^{\kappa}(h;\,\cdot):
    \Pi_{m,S}(h)\hookrightarrow\calS_m^{\ell_m,\eta_m}
  \]
  is a nonzero \(G_m(\bbA_\bfh)\)-equivariant embedding.
  For \(\phi_m\in \Pi_{m,S}(h)\) and \(g_\bfh\in G_m(\bbA_\bfh)\),
  \[
    I_{m,S}^{\kappa}(h;\phi_m)_{g_\bfh}(Z)
    =
    c_{m,S,\infty}
    \sum_{T\in\Her_m(F)_{>0}}
    |\det T|_\infty^{(\ell_m+1)/2}\,
    \calJ_T^S(g_\bfh\cdot\phi_m)
    e_\infty(\tr(TZ)).
  \]
\end{prop}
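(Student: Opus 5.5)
Most of the statement is formal consequence of the construction. The substantive point is nonvanishing.

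\emph{Equivariance and injectivity.} First I will check that \(\widetilde I_{m,S}^{\kappa}(h;\,\cdot)\) is \(G_m(\bbA_\bfh)\)-equivariant for the action \(g_\bfh\cdot(\phi\otimes\Phi)\). Right translation by \(g_\bfh\) commutes with \(\calT_{S,\Phi}\) in the following sense:
\[
  \calT_{S,\Phi}(f)(gg_\bfh)=\calT_{S,\omega_{S,\bfh}(g_\bfh)\Phi}\bigl(f(\,\cdot\,\iota_{m+1,1}(g_\bfh))\bigr)(g).
\]
To see this, substitute into the defining integral and use \(\Theta_S(u,gg_\bfh;\Phi_S)=\Theta_S(u,g;\omega_S(g_\bfh)\Phi_S)\), exactly as in the proof of Proposition~\ref{prop:fourier-jacobi-level-weight}. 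Then the \(G_r(\bbA_\bfh)\)-equivariance of \(I_{m+1}^\kappa(h;\,\cdot)\) from Theorem~\ref{thm:odd-degree-lift} finishes the check. Since \(\Pi_{m,S}(h)\) is defined as the quotient by the kernel, which was already shown to be \(G_m(\bbA_\bfh)\)-stable, the induced map is injective and equivariant. The displayed Fourier expansion is then just Lemma~\ref{lem:archimedean-FJ-factor} applied to a representative of \(\phi_m\).

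\emph{Nonvanishing.} It remains to show \(\Pi_{m,S}(h)\neq0\). By the description of the kernel, this means exhibiting \(\phi\), \(\Phi\) and \(T\in\Her_m(F)_{>0}\) with \(\calJ_T^S(\phi\otimes\Phi)\neq0\). The plan is to exploit the freedom in \(\Phi\). Consider the function
\[
  x\longmapsto F_\phi(x)=\calW_{S\oplus T}\bigl(\Pi_{m+1}(h)(u_{m+1,1}(x,0,0))\phi\bigr)
\]
on \(M_{1,m}(\bbA_{E,\bfh})\). It is locally constant, since \(\phi\) is smooth and \(x\mapsto u_{m+1,1}(x,0,0)\) is continuous. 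If \(F_\phi(x_0)\neq0\) at some point \(x_0\), take \(\Phi\) to be the characteristic function of a small compact open neighbourhood of \(x_0\) on which \(F_\phi\) is constant. Then
\[
  \calJ_T^S(\phi\otimes\Phi)=\vol_{S,\bfh}(\text{nbhd})\,F_\phi(x_0)\neq0.
\]
So it suffices to find \(\phi\) and \(T\) with \(\calW_{S\oplus T}(\phi)\neq0\), which is \(F_\phi(0)\). This follows from the nonvanishing of the odd-degree lift. Indeed, \(I_{m+1}^\kappa(h;\phi)\neq0\) for some \(\phi\): \(J_{m+1}^\kappa\) is injective, and its restriction to \(G_{m+1}(\bbA)\) is nonzero because \(\widetilde G^+\) is generated by \(G\) and the \(d_r(\xi)\), \(\xi\in F_{>0}\), together with the Fourier-coefficient transformation law of \(\calW_B\). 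Hence \(\calW_B(\phi')\neq0\) for some \(B\in\Her_{m+1}(F)_{>0}\) and some translate \(\phi'\).

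\emph{Main obstacle.} The remaining step, which I expect to be the hardest, is to move such a \(B\) into the shape \(S\oplus T\) with the \emph{prescribed} \(S\). The transformation law \(\calW_B(\Pi(m_r(A))\phi)=\varepsilon(\cdots)\calW_{B[A]}(\phi)\) for \(A\in\GL_{m+1}(E)\) lets us replace \(B\) by any \(B[A]\). The action of \(d_r(\xi)\) lets us scale by \(\xi\in F_{>0}\). So the question is whether some totally positive Hermitian form in the similitude class of \(B\) represents \(S\). After scaling, \(S\) is represented once \(m+1\geq2\), since binary totally positive Hermitian forms over \(E\) represent every totally positive element of \(F\) by Hasse--Landherr local-global arguments. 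Splitting off that vector gives \(B[A]=S\oplus T\) with \(T\in\Her_m(F)_{>0}\). One must also track the archimedean factor \(\varepsilon\) and \(|\det|^{\ell_r/2}\), which are nonzero, so the nonvanishing survives. If representing \(S\) exactly proves delicate, a fallback is to use the freedom in \(\xi\) and \(\Pi(d_r(\xi))\) together with the local freedom in \(\phi'\) at finitely many places.
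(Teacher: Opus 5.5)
Your proof is correct, and the equivariance, injectivity and Fourier-expansion parts match the paper. The nonvanishing step takes a different route.

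The paper never has to move an index into the shape \(S\oplus T\). It fixes an arbitrary \(T\in\Her_m(F)_{>0}\) and cites Yamana's Propositions~4.5(2) and~6.3 to get \(\calW_{S\oplus T}\neq0\) on \(\Pi_{m+1}(h)\) directly. It then takes \(\phi\) with \(\calW_{S\oplus T}(\phi)\neq0\) and \(\Phi=1_X\) for a small neighbourhood \(X\) of \(0\), which is your locally-constant argument at \(x_0=0\).

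You instead get the nonvanishing of some \(\calW_B\) from the injectivity of \(J_{m+1}^\kappa(h;\,\cdot)\) in Theorem~\ref{thm:odd-degree-lift}. You then transport \(B\) to the shape \(S\oplus T\) using the transformation law under \(d_{m+1}(\xi)m_{m+1}(A)\) with \(\xi\in F_{>0}\) and \(A\in\GL_{m+1}(E)\). This uses only global statements already recorded in the paper and avoids the local Shalika nonvanishing results. The cost is that you obtain \(\calJ_T^S\not\equiv0\) for some \(T\) rather than for every \(T\); that is enough for this proposition.

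Two parts of your write-up can be simplified.

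First, the detour through \(I_{m+1}^\kappa(h;\phi)\neq0\) is unnecessary, and the generation claim you use for it is false at the finite adeles. The subgroup generated by \(G_{m+1}(\bbA_\bfh)\) and the \(d_{m+1}(\xi)\) with \(\xi\in F_{>0}\) has similitude factors only in \(F_{>0}\), whereas \(\nu(\widetilde G_{m+1}(\bbA_\bfh))=\bbA_{F,\bfh}^\times\). You do not need it. If \(J_{m+1}^\kappa(h;\phi)\neq0\), then some Fourier coefficient \(\calW_B(\Pi_{m+1}(h)(\widetilde g_\bfh)\phi)\) is nonzero. Since \(\calJ_T^S\) involves only \(\calW_{S\oplus T}\) on \(\Pi_{m+1}(h)\) itself, that is all you need.

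Second, your ``main obstacle'' is elementary. Put \(\xi=B[e_1]/S\in F_{>0}\), and let \(A\in\GL_{m+1}(E)\) have first column \(e_1\) and remaining columns spanning the \(B\)-orthogonal complement of \(e_1\). Then \(\xi^{-1}B[A]=S\oplus T\) with \(T\) totally positive definite. No Hasse--Landherr or Hasse--Minkowski argument is needed.
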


\begin{proof}
  Equivariance,
  injectivity,
  and the Fourier expansion follow from the definition of \(\Pi_{m,S}(h)\) and Lemma~\ref{lem:archimedean-FJ-factor}.
  It remains to prove nonvanishing.
  Fix \(T\in\Her_m(F)_{>0}\).
  By \cite[Propositions~4.5(2) and~6.3]{Yamana2020HilbertHermitian},
  the functional \(\calW_{S\oplus T}\) is nonzero.
  Hence we may choose \(\phi\in\Pi_{m+1}(h)\) such that
  \[
    \calW_{S\oplus T}(\phi)\neq0.
  \]
  By smoothness,
  choose a compact open neighborhood \(X\subset M_{1,m}(\bbA_{E,\bfh})\) of \(0\) such that \(\phi\) is fixed by \(u_{m+1,1}(x,0,0)\) for every \(x\in X\).
  Taking \(\Phi=1_X\),
  we obtain directly from \eqref{eq:global-finite-FJ-functional}
  \[
    \calJ_T^S(\phi\otimes\Phi)
    =
    \vol_{S,\bfh}(X)\,
    \calW_{S\oplus T}(\phi)
    \neq0.
  \]
  Since \(c_{m,S,\infty}\neq0\),
  \eqref{eq:even-Fourier-expansion-J} shows that the image of \(\widetilde I_{m,S}^{\kappa}(h;\,\cdot)\) is nonzero.
\end{proof}

\section{Arthur parameters and global packets}
\label{sec:arthur-packets}

We recall the endoscopic classification,
define the global Arthur parameters attached to the lifts,
and identify the packets containing them from their unramified local parameters.

\subsection{Endoscopic classification}

For \(N\geq1\),
put \(G_N=\U_{N,N}\),
so that \(\widehat G_N=\GL_{2N}(\bbC)\).
Write \(W_F\) and \(W_E\) for the global Weil groups of \(F\) and \(E\).
At an unramified finite place \(v\),
put \(K_{N,v}=G_N(\calO_{F_v})\).
We fix the following normalization of the standard \(L\)-embedding:
\[
  \xi_{\std}:{}^L{G_N}\rightarrow
  {}^L{(\Res_{E/F}\GL_{2N})}
  =(\GL_{2N}(\bbC)\times\GL_{2N}(\bbC))\rtimes W_F.
\]
Let \(J_N\) be the matrix used above to define \(G_N\),
and choose \(w_c\in W_F\setminus W_E\) mapping to \(c\in\Gal(E/F)\).
The embedding is determined by
\[
  \begin{aligned}
    \xi_{\std}(g\rtimes1)
     & =(g,{}^tg^{-1})\rtimes1
     &                               & (g\in\GL_{2N}(\bbC)), \\
    \xi_{\std}(1_{2N}\rtimes\gamma)
     & =(1_{2N},1_{2N})\rtimes\gamma
     &                               & (\gamma\in W_E),      \\
    \xi_{\std}(1_{2N}\rtimes w_c)
     & =(J_N,J_N^{-1})\rtimes w_c.
  \end{aligned}
\]
Here \(W_F\) acts on the dual groups through \(\Gal(E/F)\);
on \(\GL_{2N}(\bbC)\times\GL_{2N}(\bbC)\),
the nontrivial Galois element interchanges the two factors.

We write an elliptic global Arthur parameter for \(G_N\) in the form
\[
  \psi=\mathop{\boxplus}\limits_{i=1}^t\mu_i[d_i].
\]
Here \(\sum_{i=1}^t m_id_i=2N\),
the \(\mu_i\) are pairwise distinct conjugate self-dual unitary cuspidal representations of \(\GL_{m_i}(\bbA_E)\),
and \([d_i]\) denotes the \(d_i\)-dimensional irreducible representation of the Arthur \(\SL_2(\bbC)\).
For a conjugate self-dual cuspidal representation \(\mu\) of \(\GL_m(\bbA_E)\),
write \(L(s,\mu,\Asai^\pm)\) for its two Asai \(L\)-functions.
We write \(\delta(\mu)\in\{\pm1\}\) for the sign characterized by the condition that \(L\left(s,\mu,\Asai^{(-1)^{m-1}\delta(\mu)}\right)\) has a pole at \(s=1\).

By~\cite[Remark~2.4.6, Definition~2.4.7, and Theorem~2.5.4(a)]{Mok2015Endoscopic},
the formal sum above factors through the standard embedding \(\xi_{\std}\) precisely when
\begin{equation}\label{eq:mok-sign}
  \delta(\mu_i)=(-1)^{m_i+d_i+1}
\end{equation}
for every \(1\leq i\leq t\).
When this holds,
we write \(\psi\in\Psi_2(G_N,\xi_{\std})\).
For each place \(v\),
write \(\psi_v\) for the localization and \(\Pi_{\psi_v}\) for the corresponding local packet.
The global packet is the restricted product
\[
  \Pi_\psi
  =
  \left\{
  \pi={\bigotimes_v}'\pi_v
  \ \middle|\
  \pi_v\in\Pi_{\psi_v}
  \right\}.
\]
Following Mok,
we use the notation \(\calS_\psi\),
\(\epsilon_\psi\),
and \(\langle\,\cdot\,,\pi\rangle_\psi\) for the global component group,
its canonical sign character,
and the global pairing;
see~\cite[\S\S~2.4--2.5]{Mok2015Endoscopic}.
We write \(L^2_{\disc}(G_N(F)\backslash G_N(\bbA))\) for the discrete part of the automorphic \(L^2\)-spectrum.
For an automorphic representation \(\pi\) of \(G_N(\bbA)\),
\(L(s,\pi,\Std)\) denotes the standard \(L\)-function attached to \(\xi_{\std}\).

\begin{prop}
  \label{prop:mok-global-multiplicity}
  For \(\psi\in\Psi_2(G_N,\xi_{\std})\),
  put
  \[
    \Pi_\psi(\epsilon_\psi)
    =
    \{\pi\in\Pi_\psi:\langle\,\cdot\,,\pi\rangle_\psi=\epsilon_\psi\}.
  \]
  Then
  \[
    L^2_{\disc}(G_N(F)\backslash G_N(\bbA))
    \simeq
    \bigoplus_{\psi\in\Psi_2(G_N,\xi_{\std})}
    \ \bigoplus_{\pi\in\Pi_\psi(\epsilon_\psi)}\pi.
  \]
\end{prop}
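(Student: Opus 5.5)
This proposition is Mok's global multiplicity formula for the quasi-split unitary group \(G_N=\U_{N,N}\) attached to \(E/F\). The plan is therefore a reduction to \cite[Theorem~2.5.2 and Theorem~2.5.4]{Mok2015Endoscopic}, not a new argument. The work is in checking that the normalizations fixed above agree with Mok's.

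First, I will identify the data. Our \(G_N\) is defined by the skew-Hermitian form \(J_N\). Multiplying by \(\sqrt{-1}\)-type scalars in \(E\) turns it into a Hermitian form of Witt index \(N\), so \(G_N\) is Mok's quasi-split \(\U(2N)\) (even rank \(2N\)). Its \(L\)-group is \(\GL_{2N}(\bbC)\rtimes W_F\), with \(w_c\) acting by \(g\mapsto J_N{}^tg^{-1}J_N^{-1}\), up to the usual pinning-preserving normalization.

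The embedding \(\xi_{\std}\) written above is the base change embedding \(\xi_{\chi_\kappa}\) of Mok with \(\kappa=+1\), i.e. with trivial \(\chi\). The sign is \(+1\) because \(2N\) is even and the restriction of \(\xi_{\std}\) to \(W_E\) is trivial on the \(W_E\)-part. Mok's theorem describes the discrete spectrum through parameters \(\psi\) relative to \(\xi_{\chi_\kappa}\) for either choice of \(\kappa\). The resulting set of parameters does not depend on that choice, up to twisting each \(\mu_i\) by a character.

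The second step is to match the parameter set \(\Psi_2(G_N,\xi_{\std})\) defined via the sign condition \eqref{eq:mok-sign} with Mok's \(\widetilde\Psi_2(\U(2N),\xi_{\std})\). By \cite[Theorem~2.5.4(a)]{Mok2015Endoscopic}, \(\mu[d]\) factors through \(\xi_{\std}\) exactly when the parity of \(\mu\) is \((-1)^{2N-1+d-1}\) twisted by the dimension shift. Rewriting parity via the poles of Asai \(L\)-functions and our definition of \(\delta(\mu)\) gives \eqref{eq:mok-sign}. This is bookkeeping of signs.

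The third step is to invoke Mok's main global theorem, \cite[Theorem~2.5.2]{Mok2015Endoscopic}. It says
\[
  L^2_{\disc}(G_N(F)\backslash G_N(\bbA))
  \simeq
  \bigoplus_{\psi}\bigoplus_{\pi\in\Pi_\psi,\ \langle\cdot,\pi\rangle=\epsilon_\psi} m_\psi\,\pi.
\]
Here the local packets are normalized by the Whittaker datum attached to \(\psi\) and the fixed pinning. Two points need checking. The multiplicity \(m_\psi\) equals \(1\) because \(G_N\) is a unitary group, so there is no outer-automorphism ambiguity, unlike even orthogonal groups. The pairing \(\langle\cdot,\pi\rangle_\psi=\prod_v\langle\cdot,\pi_v\rangle_{\psi_v}\) is independent of the choice of global Whittaker datum, since changing it alters each local character by a character whose global product is trivial.

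I expect the only real obstacle to be the compatibility of normalizations: the \(L\)-embedding, the Whittaker datum used to index local packets, and the sign convention in \(\delta(\mu)\). I would settle these by comparing with Mok's explicit formulas for \(\xi_{\chi_\kappa}\) in \cite[\S~2.1]{Mok2015Endoscopic}. Since the statement is used only as a citation, the proof will consist of this translation.
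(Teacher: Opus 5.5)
Your proposal is correct and takes the paper's approach: the paper gives no separate proof and simply records this proposition as Mok's global multiplicity formula \cite[Theorem~2.5.2]{Mok2015Endoscopic}, with \(\Psi_2(G_N,\xi_{\std})\) identified through the sign condition \eqref{eq:mok-sign}; multiplicity one is already part of Mok's statement for unitary groups. Your normalization checks are the bookkeeping the paper leaves implicit: that \(\xi_{\std}\) is conjugate to Mok's standard base-change embedding with trivial character, because \(J_N\) and Mok's antidiagonal matrix are both alternating in rank \(2N\), and that parity translates into \(\delta(\mu)\).
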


\begin{lem}
  \label{lem:mok-spherical-packet}
  Let \(v\) be a finite place at which \(G_N\) is unramified,
  and let \(\psi_v\) be a local Arthur parameter.
  If \(\Pi_{\psi_v}\) contains a \(K_{N,v}\)-spherical representation \(\pi_v^{\sph}\),
  then \(\pi_v^{\sph}\) is the unique spherical member of the packet,
  and its character on \(\calS_{\psi_v}\) is trivial.
\end{lem}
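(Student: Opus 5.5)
The statement to prove is Lemma~\ref{lem:mok-spherical-packet}: a spherical member of $\Pi_{\psi_v}$ is unique, and it pairs trivially with $\calS_{\psi_v}$. I would reduce both parts to the structure of Mok's local packets at an unramified place. The approach is the standard one (as in Arthur's Theorem 1.5.1 and its unitary analogue in Mok): compare the spherical vector's character against the stable transfer of the spherical test function.

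\emph{Uniqueness.} Since $v$ is unramified for $G_N$, $E_v/F_v$ is unramified or split. If a spherical $\pi_v^{\sph}$ lies in $\Pi_{\psi_v}$, its Satake parameter is determined by $\psi_v$. Mok's packets are characterized by the twisted character identity for $\widetilde{G}=\Res_{E_v/F_v}\GL_{2N}\rtimes\theta$. So I would test that identity against the unit $1_{K_{N,v}}$ of the spherical Hecke algebra, together with its twisted transfer $1_{\GL_{2N}(\calO_{E_v})}\rtimes\theta$; this transfer is the fundamental lemma for twisted base change, as used by Mok.

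The identity reads
\[
\sum_{\pi\in\Pi_{\psi_v}}\langle s_\psi,\pi\rangle\,\tr\pi(1_{K_{N,v}})=\tr\Pi_{\psi_v}(1\rtimes\theta).
\]
The twisted trace on the right equals $1$, because the representation of $\GL_{2N}(E_v)$ attached to $\psi_v$ is unramified with normalized intertwiner fixing the spherical vector. On the left, $\tr\pi(1_{K_{N,v}})=\dim\pi^{K_{N,v}}$, which is $1$ for spherical members and $0$ otherwise. Hence
\[
\sum_{\pi\ \text{spherical}}\langle s_\psi,\pi\rangle=1.
\]
This alone only constrains signs. So I would next apply the endoscopic identity for each $s\in\calS_{\psi_v}$, with endoscopic transfer of $1_{K_{N,v}}$ again given by the fundamental lemma, yielding
\[
\sum_{\pi\ \text{spherical}}\langle s,\pi\rangle=1\quad\text{for all } s.
\]
Summing over $s$ and using orthogonality of characters of the finite abelian group $\calS_{\psi_v}$ shows that, counted with multiplicity, exactly one spherical member has the trivial character. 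Moreover, no member has a nontrivial character: the multiplicity of the character $\epsilon$ among spherical members is $|\calS|^{-1}\sum_s\epsilon(s)=\delta_{\epsilon,1}$.

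Since all spherical representations in the packet share the same Satake parameter, they are isomorphic. Packets at unramified places are multiplicity free (Mok, Theorem 2.5.1). Hence $\pi_v^{\sph}$ is unique and its character is trivial.

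\emph{Main obstacle.} The delicate step is normalization. The pairing must be taken with respect to a Whittaker datum compatible with $K_{N,v}$, i.e.\ with $\psi_v$ of conductor $\calO_{F_v}$; otherwise the spherical member could pair by a nontrivial character. I would first check that Mok's fixed global Whittaker datum is $K_{N,v}$-generic at almost all $v$, and that at all unramified $v$ it can be arranged up to the ambiguity absorbed in $\calS_{\psi_v}$. Alternatively, I would simply cite Mok's statement (Theorem 2.5.1(b) and the remark on unramified places) if it is stated there.
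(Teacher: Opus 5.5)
Your argument is correct, but it follows a different route from the paper. The paper gives no argument of its own: it quotes Mok's local theorem (Theorem~2.5.1(a) and \S~7.6) together with Shahidi's results, so it treats the statement as an established property of Mok's packets at unramified places.

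You instead derive it from the local character identities. Testing the twisted identity against $1_{\GL_{2N}(\calO_{E_v})}\rtimes\theta$, and each endoscopic identity against $1_{K_{N,v}}$, the unit-element fundamental lemmas give $\sum_{\pi\ \mathrm{sph}}\langle y,\pi\rangle=1$ for every $y\in\calS_{\psi_v}$. Orthogonality on the finite abelian group $\calS_{\psi_v}$ then forces exactly one spherical member, and its character is trivial.

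What your route buys:
\begin{itemize}
\item It is self-contained.
\item It gives existence as well as uniqueness for unramified $\psi_v$.
\item It works directly with $\Pi_{\psi_v}$ as a multiset, so no multiplicity-one input is needed.
\item Run with the whole spherical Hecke algebra instead of the unit, it would also determine the Satake parameter of that member. This is what Proposition~\ref{prop:local-satake-even} tacitly uses.
\end{itemize}

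It also exposes a hypothesis left implicit in the paper's statement. The character is trivial only for a Whittaker datum compatible with $K_{N,v}$, i.e.\ additive character of conductor $\calO_{F_v}$. For instance, in a reducible unramified unitary principal series of $\U_{1,1}$, the spherical constituent is generic only for the unramified Whittaker datum. For the other datum it therefore pairs nontrivially. This is harmless for the paper, since the lemma is applied only at good places, but your ``main obstacle'' paragraph is right.

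The price of your route is its inputs:
\begin{itemize}
\item the unit-element fundamental lemma in both the base change and the standard endoscopic settings;
\item Whittaker-normalized transfer factors that agree with the unramified ones;
\item the fact that the normalized twisted intertwiner fixes the spherical vector of $\pi_{\psi_v}$. When $\pi_{\psi_v}$ is not generic, this last point must be checked on the standard module through which the operator is normalized.
\end{itemize}

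Two small repairs.

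First, you use without justification that $\psi_v$ is unramified. You can derive it. If $\pi_{\psi_v}$ had no $\GL_{2N}(\calO_{E_v})$-fixed vector, every right-hand side above would vanish. Indeed, at least one of the general linear representations whose twisted unit trace it computes would have no fixed vector under the maximal compact subgroup. The same orthogonality count would then show that $\Pi_{\psi_v}$ has no spherical member, contradicting the hypothesis.

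Second, your closing appeal to Satake parameters and to multiplicity-freeness is superfluous. The orthogonality count already shows that the spherical members have total multiplicity one in the multiset.
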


\begin{proof}
  This follows from~\cite[Theorem~2.5.1(a) and \S~7.6]{Mok2015Endoscopic} together with~\cite{Shahidi1981Certain,Shahidi1990Plancherel}.
\end{proof}

\subsection{Arthur parameters of the lifts}
\label{subsec:arthur-parameters-lifts}

We say that \(h\) has CM by \(E\) if \(\pi_h\simeq\pi_h\otimes\chi_{E/F}\).
If \(h\) has CM by \(E\),
quadratic automorphic induction \cite[Chapter~3, Theorems~4.2(b) and~6.2]{ArthurClozel1989BaseChange} gives a Hecke character \(\theta\) of \(E\),
unique up to conjugation,
such that \(\pi_h\simeq\AI_{E/F}(\theta)\),
with \(\theta\neq\theta^c\).

For \(n\geq1\),
put
\[
  \psi_n(h)=
  \begin{cases}
    (((\vartheta_n)^c)^{-1}\otimes\BC_{E/F}(\pi_h))[n]                           & \text{if }h\text{ does not have CM by }E, \\
    (((\vartheta_n)^c)^{-1}\theta)[n]\boxplus(((\vartheta_n)^c)^{-1}\theta^c)[n] & \text{if }\pi_h\simeq\AI_{E/F}(\theta).
  \end{cases}
\]
Here \(\BC_{E/F}\) denotes quadratic base change from \(F\) to \(E\),
and \(\AI_{E/F}\) denotes automorphic induction from \(E\) to \(F\).

\begin{prop}\label{prop:global-parameter-all-cases}
  For every \(n\geq1\),
  \[
    \psi_n(h)\in\Psi_2(G_n,\xi_{\std}).
  \]
\end{prop}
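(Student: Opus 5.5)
We have to verify, in each case, the three conditions that put \(\psi_n(h)\) in \(\Psi_2(G_n,\xi_{\std})\): the summands are unitary cuspidal, they are pairwise distinct and conjugate self-dual, and the sign condition \eqref{eq:mok-sign} holds with \(d_i=n\). We also need \(\sum m_id_i=2n\); this holds in both cases, since \(2\cdot n=2n\) and \(1\cdot n+1\cdot n=2n\).

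\emph{Non-CM case.} Put \(\mu=((\vartheta_n)^c)^{-1}\otimes\BC_{E/F}(\pi_h)\), a representation of \(\GL_2(\bbA_E)\). Since \(\pi_h\not\simeq\pi_h\otimes\chi_{E/F}\), Arthur--Clozel shows that \(\BC_{E/F}(\pi_h)\) is cuspidal, so its twist \(\mu\) is cuspidal. Unitarity holds after the unitary normalization of \(\pi_h\), because \(\vartheta_n\) is unitary.

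Next, \(\mu\) is conjugate self-dual. We have \(\BC(\pi_h)^c\simeq\BC(\pi_h)\) and \(\BC(\pi_h)^\vee\simeq\BC(\pi_h)\otimes(\omega_h\circ N_{E/F})^{-1}\). Hence
\[\mu^{c\vee}\simeq\vartheta_n\otimes\BC(\pi_h)\otimes(\omega_h\circ N)^{-1}.\]
Since \(\vartheta_n|_{\bbA^\times}=\omega_h\chi_{E/F}^{\,n-1}\), we get \(\vartheta_n\vartheta_n^c=\omega_h\circ N\). Therefore \(\vartheta_n(\omega_h\circ N)^{-1}=(\vartheta_n^c)^{-1}\), and so \(\mu^{c\vee}\simeq\mu\).

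For the sign we need \(\delta(\mu)=(-1)^{2+n+1}=(-1)^{n+1}\), that is, \(L(s,\mu,\Asai^{(-1)^{n}})\) should have a pole at \(s=1\). Write \(\BC(\pi_h)\otimes\lambda^{-1}\) with \(\lambda=\vartheta_n^c\). Let \(\lambda_0\) be a character with \(\lambda_0|_{\bbA^\times}=\omega_h\), for instance \(\vartheta^+\) (take its conjugate if needed). For \(\GL_2\), \(L(s,\BC(\pi)\otimes\lambda_0^{-1},\Asai^+)\) factors as \(L(s,\mathrm{Sym}^2\text{-type}\ \pi\otimes\omega^{-1})\) times an abelian \(L\)-function involving \(\omega_\pi\lambda_0^{-1}|_{\bbA^\times}\). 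From this factorization, the twist by \(\lambda_0^{-1}\) with \(\lambda_0|_{\bbA^\times}=\omega_h\) has \(\Asai^+\) with a pole, so \(\delta=-1\) in the convention \(\Asai^{(-1)^{m-1}\delta}\) with \(m=2\). Twisting instead by \(\lambda_0\chi_-^{-1}\), whose restriction changes by \(\chi_{E/F}\), swaps \(\Asai^\pm\) and hence flips \(\delta\).

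With \(n\) odd and \(\vartheta_n=\vartheta^+\) we get \(\delta=-1\). This is consistent with \((-1)^{n+1}=1\)? This needs checking: the required value is \((-1)^{n+1}\), which equals \(+1\) for \(n\) odd. The parity bookkeeping is exactly the delicate point, so I would verify it carefully against Mok's convention, e.g. by checking the case \(n=1\), where \(\psi_1(h)\) should be the theta/base-change parameter of \(\U_{1,1}\) corresponding to Yamana's \((\vartheta^+)^{-1}\boxtimes\pi_h\). The expected outcome is that the sign alternates with \(n\), exactly as \(\vartheta_n\) alternates between \(\vartheta^+\) and \(\vartheta^+\chi_-^{-1}\).

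\emph{CM case.} Put \(\mu_1=(\vartheta_n^c)^{-1}\theta\) and \(\mu_2=(\vartheta_n^c)^{-1}\theta^c\), both characters of \(\bbA_E^\times\). They are distinct because \(\theta\neq\theta^c\). From \(\AI(\theta)\simeq\pi_h\) we get \(\theta|_{\bbA^\times}=\omega_h\chi_{E/F}\). Conjugate self-duality of \(\mu_1\) means \(\mu_1|_{\bbA^\times}\) is trivial or \(\chi_{E/F}\), and indeed
\[\mu_1|_{\bbA^\times}=\omega_h^{-1}\chi_{E/F}^{\,n-1}\cdot\omega_h\chi_{E/F}=\chi_{E/F}^{\,n}.\]
For a character (\(m=1\)), \(\delta=+1\) if and only if the restriction is trivial. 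So \(\delta(\mu_1)=(-1)^n=(-1)^{1+n+1}\), which is exactly \eqref{eq:mok-sign}, and the same holds for \(\mu_2\).

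The main obstacle is the non-CM sign computation. I would reduce it to the CM-type check via this \(m=1\) principle: identify \(\delta\) of a conjugate-self-dual \(\GL_2\) twist through its central restriction, using Mok's characterization that \(\mu\) is conjugate-orthogonal or conjugate-symplectic. The key fact is that \(\BC(\pi)\otimes\lambda^{-1}\) with \(\lambda|_{\bbA^\times}=\omega_\pi\) is conjugate-orthogonal, by Gan--Gross--Prasad's description of base change twists. This gives \(\delta=(-1)^{n+1}\) after the \(\chi_-\) shift.
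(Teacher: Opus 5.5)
Your checks of cuspidality, distinctness and conjugate self-duality, and your whole treatment of the CM case, are correct and agree with the paper. The gap is the sign condition \eqref{eq:mok-sign} in the non-CM case. This is the only substantive point, and there the fact you rely on is false.

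In your Asai factorization you dropped a quadratic twist. The element $w_c$ acts on the $\wedge^2$-part of the Asai representation of $\phi_{\pi_h}|_{W_E}$ by $-\det$, not by $\det$. So with $\lambda_0|_{\bbA^\times}=\omega_h$ one has, for partial $L$-functions,
\[ L(s,\mu_1,\Asai^+)=L(s,\pi_h,\Sym^2\otimes\omega_h^{-1})\,L(s,\chi_{E/F}),\qquad L(s,\mu_1,\Asai^-)=L(s,\pi_h,\Sym^2\otimes\omega_h^{-1}\chi_{E/F})\,\zeta_F(s). \]
The first is regular at $s=1$. The second has a pole there, because $\pi_h\not\simeq\pi_h\otimes\chi_{E/F}$ makes the twisted adjoint $L$-function holomorphic and nonzero at $s=1$.

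So $\mu_1$ is conjugate-symplectic, not conjugate-orthogonal. This is exactly \cite[Lemma~C.2]{Yamana2020HilbertHermitian}, which the paper invokes. It gives $\delta(\mu_1)=+1=(-1)^{2+1+1}$. Your correct remark that the $\chi_-^c$-twist swaps $\Asai^\pm$ then gives $\delta(\mu_n)=-1=(-1)^{n+1}$ for even $n$.

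Your claimed key fact instead yields $\delta(\mu_n)=(-1)^n$ for every $n$, which is the opposite of what is required. You noticed this mismatch for odd $n$, but then asserted the desired conclusion anyway.

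Your fallback of reading off $\delta$ from a central restriction, as for characters, cannot repair this. The determinant of a $2$-dimensional conjugate self-dual representation is conjugate-orthogonal whatever its sign. Indeed the central character of $\mu_n$ is trivial on $\bbA^\times$ for all $n$, so it carries no information. The sign must come from a genuine pole computation as above.

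The sanity check you suggested would have caught the error. For $n=1$ the parameter must factor through $\xi_{\std}$ for $\U_{1,1}$, and the $2$-dimensional parameters of $\U_{1,1}$ are conjugate-symplectic.
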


\begin{proof}
  Suppose first that \(h\) does not have CM by \(E\).
  By cyclic base change~\cite[Chapter~3, Theorem~4.2(a)]{ArthurClozel1989BaseChange},
  \(\BC_{E/F}(\pi_h)\) is cuspidal.
  Put
  \[
    \mu_n=((\vartheta_n)^c)^{-1}\otimes\BC_{E/F}(\pi_h).
  \]
  The identities \(\vartheta_n\vartheta_n^c=\omega_h\circ N_{E/F}\) and \(\pi_h^\vee\simeq\pi_h\otimes\omega_h^{-1}\) imply that \(\mu_n\) is conjugate self-dual.
  By~\cite[Lemma~C.2]{Yamana2020HilbertHermitian},
  \(L(s,\mu_1,\Asai^-)\) has a pole at \(s=1\).
  For even \(n\),
  we have \(\mu_n=\mu_1\otimes\chi_-^c\),
  and this twist interchanges the Asai signs since \(\chi_-^c|_{\bbA^\times}=\chi_{E/F}\).
  Hence \(\delta(\mu_n)=(-1)^{n+1}\),
  which is exactly~\eqref{eq:mok-sign} for \(m_i=2\) and \(d_i=n\).

  Now suppose that \(\pi_h\simeq\AI_{E/F}(\theta)\),
  with \(\theta\neq\theta^c\).
  Then \(\BC_{E/F}(\pi_h)=\theta\boxplus\theta^c\) and \(\theta\theta^c=\omega_h\circ N_{E/F}\).
  Hence the two characters \(((\vartheta_n)^c)^{-1}\theta\) and \(((\vartheta_n)^c)^{-1}\theta^c\) are conjugate self-dual.
  Their restriction to \(\bbA^\times\) is \(\chi_{E/F}\) when \(n\) is odd and trivial when \(n\) is even.
  Hence both characters have \(\delta=(-1)^n\),
  which is~\eqref{eq:mok-sign} for \(m_i=1\) and \(d_i=n\).
\end{proof}

\subsection{Unramified local parameters}
\label{subsec:common-local-setup}

Fix a finite place \(v\in\bfh\).
Write \(W_{F_v}\) and \(W_{E_v}\) for the corresponding local Weil groups,
and write \(\BC_{E_v/F_v}\) for local quadratic base change.
We recall the local realizations and functionals used in both odd and even degree.
For a quasi-character \(\zeta:E_v^\times\rightarrow\bbC^\times\),
let \(I_{n,v}(\zeta)\) be the normalized degenerate principal series of \(G_n(F_v)\) consisting of smooth functions satisfying
\[
  f(m_n(A)n_n(Z)g)
  =
  \zeta(\det A)
  |\det A|_{E_v}^{n/2}
  f(g)
\]
for \(A\in\GL_n(E_v)\),
\(Z\in\Her_n(F_v)\),
and \(g\in G_n(F_v)\).
Put \(K_{n,v}=G_n(\calO_{F_v})\).

For nondegenerate \(B\in\Her_n(F_v)\),
let
\[
  w_{B,v}^{\zeta}:I_{n,v}(\zeta)\longrightarrow\bbC
\]
be the normalized degenerate Whittaker functional attached to \(B\) in~\cite[Definition~5.1]{Yamana2020HilbertHermitian}.
Put
\[
  b_{n,v}(\zeta)
  =
  \prod_{j=1}^{n}
  L\left(
  j,
  \left.\zeta\right|_{F_v^\times}\chi_v^{\,n+j}
  \right).
\]
For \(f\in I_{n,v}(\zeta)\),
let \(f_s\in I_{n,v}(\zeta|\cdot|_{E_v}^s)\) be the standard holomorphic section through \(f\).
Then
\begin{equation}
  \label{eq:local-Jacquet-functional}
  w_{B,v}^{\zeta}(f)
  =
  |\det B|_{F_v}^{n/2}b_{n,v}(\zeta)
  \left.
  \int_{\Her_n(F_v)}
  f_s(J_n n_n(Z))
  \psi_v(\tr(BZ))\,dZ
  \right|_{s=0}.
\end{equation}
The integral is initially convergent for \(\Re(s)\gg0\),
and the normalized expression is holomorphic at \(s=0\).

We apply the preceding local constructions at unramified places to identify the global packets of the odd- and even-degree lifts.

For an irreducible admissible representation \(\tau\) of a local general linear or unitary group,
write \(\phi_\tau\) for its Langlands parameter under the local Langlands correspondence.

\begin{dfn}
  A finite place \(v\in\bfh\) is called \emph{good} if
  \begin{itemize}
    \item \(v\) is inert or split in \(E/F\) and \(\psi_v\) has conductor \(\calO_{F_v}\);
    \item \(\pi_{h,v}\) and \(\vartheta_v^+\) are unramified.
  \end{itemize}
\end{dfn}

Fix a good finite place \(v\).
For the even-degree calculation in this subsection,
we further assume that \(\chi_{-,v}\) is unramified and \(S\in\calO_{F_v}^\times\).
Under these assumptions,
we identify the spherical representation obtained from the intertwining map below with the unramified member of the packet attached to \(\psi_{m,v}(h)\).

Since \(\pi_{h,v}\) is unramified,
choose an unramified quasi-character \(\mu_v:F_v^\times\rightarrow\bbC^\times\) such that
\[
  \phi_{\pi_{h,v}}
  \simeq
  \mu_v\oplus\omega_{h,v}\mu_v^{-1}.
\]
Define \(\zeta_v^+=(\vartheta_v^+)^{-c}(\mu_v\circ N_{E_v/F_v})\) and \(\zeta_v^-=\zeta_v^+\chi_{-,v}^{-1}\).
For every degree \(r\geq1\),
put
\[
  \zeta_{r,v}=
  \begin{cases}
    \zeta_v^+ & (r\text{ odd}),  \\
    \zeta_v^- & (r\text{ even}).
  \end{cases}
\]
We use this convention throughout whenever \(\zeta_v^\pm\) are defined,
including at ramified places.
The identity
\[
  \vartheta_v^+(\vartheta_v^+)^c
  =
  \omega_{h,v}\circ N_{E_v/F_v}
\]
shows that the local parameter of \(((\vartheta_{n,v})^c)^{-1}\otimes\BC_{E_v/F_v}(\pi_{h,v})\) is \(\zeta_{n,v}\oplus(\zeta_{n,v})^{-c}\).

Write \(\psi_{n,v}(h)\) for the localization of \(\psi_n(h)\),
\(\Pi_{\psi_{n,v}(h)}\) for its local Arthur packet,
and \(\phi_{n,v}(h)\) for the associated Langlands parameter.
Thus,
on \(W_{E_v}=\prod_{w\mid v}W_{E_w}\),
we have
\begin{equation}
  \label{eq:local-psi-parameter}
  \left.\phi_{n,v}(h)\right|_{W_{E_v}}
  \simeq
  \left(
  \zeta_{n,v}
  \oplus
    (\zeta_{n,v})^{-c}
  \right)
  \otimes
  \bigoplus_{j=0}^{n-1}
  |\,\cdot\,|_{E_v}^{(n-1)/2-j}.
\end{equation}

\begin{lem}
  \label{lem:unramified-Whittaker-normalization}
  Let \(v\) be good,
  let \(\zeta:E_v^\times\rightarrow\bbC^\times\) be unramified,
  and let
  \[
    \phi^\circ
    \in
    I_{n,v}(\zeta)^{K_{n,v}}
  \]
  be the normalized spherical section with \(\phi^\circ(1)=1\).
  If \(B\in\Her_n(F_v)\) is integral and unimodular,
  then
  \[
    w_{B,v}^{\zeta}(\phi^\circ)=1.
  \]
\end{lem}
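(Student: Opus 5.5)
We compute \(w_{B,v}^{\zeta}(\phi^\circ)\) directly from \eqref{eq:local-Jacquet-functional}: the integral is the unramified Siegel series at \(B\), and the normalizing factor \(b_{n,v}(\zeta)\) is designed to cancel its local \(L\)-factors.

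First we reduce to \(B=1_n\) up to a unit. Since \(B\) is integral and unimodular, there is \(A\in\GL_n(\calO_{E_v})\) with \(B[A]\) equal to \(1_n\), or to \(\diag(1,\dots,1,\epsilon)\) with \(\epsilon\in\calO_{F_v}^\times\) when \(v\) is split or inert. In either case \(\det B\) is a unit, so \(|\det B|_{F_v}^{n/2}=1\). Substituting \(Z\mapsto AZA^*\) in the integral and using \(m_n(A)\in K_{n,v}\) together with \(|\det A|_{E_v}=1\) and \(\zeta\) unramified, the value is unchanged under \(B\mapsto B[A]\). It therefore suffices to treat a diagonal unimodular \(B\).

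Next we evaluate the integral for \(\Re(s)\gg0\). For the spherical section, \(f_s(J_nn_n(Z))=\prod_i|\alpha_i|_{E_v}^{-(s+n/2)}\zeta(\alpha_i)^{-1}\) in terms of the elementary divisors \(\alpha_i\) of the polar part of \(Z\). The integral
\[
\int_{\Her_n(F_v)}f_s(J_nn_n(Z))\psi_v(\tr(BZ))\,dZ
\]
is then the classical local Siegel series \(b_v(B,\zeta(\varpi)q^{-2s-n})\), where \(dZ\) is self-dual and \(\vol(\Her_n(\calO_{F_v}))=1\) because \(\psi_v\) has conductor \(\calO_{F_v}\). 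For unimodular \(B\), the standard formulas of Shimura (inert case) and the corresponding \(\GL_n\) computation (split case) show that the Siegel series equals its ``generic'' factor, namely the reciprocal of
\[
\prod_{j=1}^n L\bigl(j+2s,\zeta|_{F_v^\times}\chi_v^{n+j}\bigr),
\]
with no residual polynomial. I would prove this in the usual way. Decompose \(\Her_n(F_v)\) according to the elementary divisors of \(Z\) and count the local densities. Alternatively, apply Kitaoka's reduction: the Siegel series for \(B=B'\oplus u\) factors into a rank-one Gauss sum times the series for \(B'\) with shifted parameter, which gives the product by induction on \(n\).

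Finally we set \(s=0\). The factor \(b_{n,v}(\zeta)\) cancels the \(L\)-factors exactly, so \(w_{B,v}^{\zeta}(\phi^\circ)=1\).

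The main obstacle is the unimodular Siegel series computation with exactly matching normalizations. This includes the shift \(n+j\) in the quadratic character, which must be checked against the section normalization \(|\det A|^{n/2}\) and the convention \(\chi_v^{n+j}\). It also includes the self-dual measure and the split case, where \(\Her_n(F_v)\simeq M_n(F_v)\) and the series becomes the \(\GL_n\) Godement--Jacquet-type integral. I would handle this by citing Shimura's explicit formula for Siegel series of Hermitian forms and Yamana's normalization in \cite[\S~5]{Yamana2020HilbertHermitian}, and verifying the \(n=1\) case by hand to pin down the conventions.
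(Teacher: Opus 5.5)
Your proposal is correct and follows essentially the same route as the paper: the normalized integral is the unramified Hermitian Siegel series at \(B\), its polynomial part is \(1\) for unimodular \(B\), and so \(b_{n,v}(\zeta)\) cancels the remaining generic \(L\)-factors exactly. The paper does not redo the Siegel series computation. It cites \cite[\S~2]{Ikeda2008Lifting} for the triviality of the polynomial and \cite[Lemma~9.2]{Yamana2020HilbertHermitian} for its identification with \(w_{B,v}^{\zeta}(\phi^\circ)\), and it treats the split case via \cite[Appendix~B]{Yamana2020HilbertHermitian} after noting that the central twist there is trivial on the elements \(J_nn_n(Z)\).
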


\begin{proof}
  If \(E_v\) is a field,
  the Hermitian Siegel series polynomial attached to \(B\) is \(1\) by~\cite[\S~2]{Ikeda2008Lifting},
  and the assertion follows from \cite[Lemma~9.2]{Yamana2020HilbertHermitian}.
  If \(E_v=F_v\oplus F_v\),
  the same calculation applies after removing the unramified central twist in the split realization of \cite[Appendix~B]{Yamana2020HilbertHermitian}.
  This twist is trivial on the elements \(J_n n_n(Z)\) occurring in \eqref{eq:local-Jacquet-functional}.
\end{proof}

By \cite[Propositions~6.2--6.3 and Corollary~6.4]{Yamana2020HilbertHermitian},
the local representation occurring in the odd-degree lift is realized in \(I_{m+1,v}(\zeta_v^+)\),
with its Shalika functional identified with \(w_{B,v}^{\zeta_v^+}\).

To pass from degree \(m+1\) to degree \(m\),
we use the intertwining map of \cite[Lemma~10.1]{Yamana2020HilbertHermitian}.
Recall the subgroup \(Y_{m+1,1}(F_v)\) defined above,
and put
\[
  \eta_{m+1,1}
  =
  \begin{pmatrix}
    0 & 0   & -1 & 0   \\
    0 & 1_m & 0  & 0   \\
    1 & 0   & 0  & 0   \\
    0 & 0   & 0  & 1_m
  \end{pmatrix}.
\]
For \(f\in I_{m+1,v}(\zeta_v^+)\),
\(\Phi\in\calS(M_{1,m}(E_v))\),
and \(g\in G_m(F_v)\),
define
\begin{equation}
  \label{eq:local-FJ-integral}
  \beta_{S,v}^{\chi_-}(g;f\otimes\Phi)
  =
  L\left(
  m+1,
  \left.\zeta_v^+\right|_{F_v^\times}
  \right)
  \int_{Y_{m+1,1}(F_v)\backslash U_{m+1,1}(F_v)}
  f(\eta_{m+1,1}u\,\iota_{m+1,1}(g))
  \overline{
    (\omega_{S_v}(u,g)\Phi)(0)
  }\,
  du.
\end{equation}
Here the quotient measure is the one fixed above.
By \cite[Lemma~10.1]{Yamana2020HilbertHermitian},
this defines a \(G_m(F_v)\)-intertwining map
\[
  \beta_{S,v}^{\chi_-}:
  I_{m+1,v}(\zeta_v^+)
  \otimes
  \overline{\omega_{S_v}}
  \longrightarrow
  I_{m,v}(\zeta_v^-).
\]

Let \(\phi_v^\circ \in I_{m+1,v}(\zeta_v^+)^{K_{m+1,v}}\) be the normalized spherical section with \(\phi_v^\circ(1)=1\),
and put \(\Phi_v^\circ=1_{M_{1,m}(\calO_{E_v})}\).
Define
\[
  f_v^\circ(g)
  =
  \beta_{S,v}^{\chi_-}
  (g;\phi_v^\circ\otimes\Phi_v^\circ).
\]

\begin{prop}
  \label{prop:local-satake-even}
  Let \(v\) be good,
  and assume that \(\chi_{-,v}\) is unramified and \(S\in\calO_{F_v}^\times\).
  Then
  \[
    0\neq f_v^\circ
    \in
    I_{m,v}(\zeta_v^-)^{K_{m,v}}.
  \]
  The \(G_m(F_v)\)-subrepresentation of \(I_{m,v}(\zeta_v^-)\) generated by \(f_v^\circ\) has,
  up to isomorphism,
  a unique irreducible quotient \(\tau_v^\circ\),
  and
  \[
    \left.
    (\xi_{\std}\circ\phi_{\tau_v^\circ})
    \right|_{W_{E_v}}
    \simeq
    \left.
    \phi_{m,v}(h)
    \right|_{W_{E_v}}.
  \]
  Hence \(\tau_v^\circ\) is the unique spherical member of \(\Pi_{\psi_{m,v}(h)}\).
\end{prop}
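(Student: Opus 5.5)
I will prove the three assertions in order: sphericity, nonvanishing, and identification of the parameter.

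\emph{Sphericity.} By \cite[Lemma~10.1]{Yamana2020HilbertHermitian}, \(\beta_{S,v}^{\chi_-}\) is \(G_m(F_v)\)-equivariant from \(I_{m+1,v}(\zeta_v^+)\otimes\overline{\omega_{S_v}}\) to \(I_{m,v}(\zeta_v^-)\). Hence it suffices to check that \(\phi_v^\circ\otimes\Phi_v^\circ\) is fixed by \(K_{m,v}\). We have \(\iota_{m+1,1}(K_{m,v})\subset K_{m+1,v}\). Since \(v\) is good, \(\chi_{-,v}\) is unramified, and \(S\in\calO_{F_v}^\times\), the standard formulas for the Weil representation give \(\omega_{S_v}^{\chi_{-,v}}(k)\Phi_v^\circ=\Phi_v^\circ\) for \(k\in K_{m,v}\). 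One checks this on generators of \(K_{m,v}\): the elements \(m_m(A)\) and \(n_m(Z)\) with integral entries, and \(J_m\), using self-duality of the lattice \(M_{1,m}(\calO_{E_v})\) for the measure \(d\mu_{S,v}\).

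\emph{Nonvanishing.} A spherical vector in \(I_{m,v}(\zeta_v^-)\) is a multiple of the normalized spherical section, so I only need \(f_v^\circ(1)\neq0\). I will compute \(f_v^\circ(1)\) directly from \eqref{eq:local-FJ-integral}. Decompose \(Y\backslash U\) via the coordinates \((X,t)\). On \(\eta_{m+1,1}u_{m+1,1}(X,0,t)\), apply an Iwasawa decomposition in \(G_{m+1}(F_v)\); this reduces the integral to a one-variable Tate-type integral in \(t\) together with a lattice integral in \(X\). The result should be \(L(m+1,\zeta_v^+|_{F_v^\times})\) times the inverse of that same factor, i.e.\ a nonzero constant (in fact \(1\) with the self-dual measures).

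As a cross-check, I can instead pair with a Whittaker functional. For integral unimodular \(T\), apply \(w_{T,v}^{\zeta_v^-}\) to \(f_v^\circ\) and compare it, via the local analogue of \eqref{eq:global-finite-FJ-functional}, with \(w_{S\oplus T,v}^{\zeta_v^+}(\phi_v^\circ)\). The latter equals \(1\) by Lemma~\ref{lem:unramified-Whittaker-normalization}, since \(S\oplus T\) is unimodular. Either route gives \(f_v^\circ\neq0\).

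\emph{Langlands parameter.} Since \(\zeta_v^-\) is unramified, \(I_{m,v}(\zeta_v^-)\) embeds in the unramified principal series induced from the Borel subgroup. Its Satake parameter on the \(\GL\)-part is \(\zeta_v^-|\cdot|_{E_v}^{m/2}\) restricted along \(\det\), which gives the exponents \(\zeta_v^-|\cdot|^{(m-1)/2-j}\) for \(j=0,\ldots,m-1\). Thus the unique spherical subquotient \(\tau_v^\circ\) (the Langlands quotient of the cyclic module generated by \(f_v^\circ\)) has standard parameter on \(W_{E_v}\) equal to
\[
\textstyle\bigoplus_j \zeta_v^-|\cdot|^{(m-1)/2-j}\oplus\bigoplus_j(\zeta_v^-)^{-c}|\cdot|^{-(m-1)/2+j}.
\]
Here \(\xi_{\std}\) contributes the contragredient-conjugate half. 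This matches \eqref{eq:local-psi-parameter} with \(n=m\). The uniqueness of the irreducible quotient follows because a spherical cyclic module has a unique spherical irreducible quotient.

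Finally, the Arthur parameter \(\psi_{m,v}(h)\) is unramified, and its packet contains the spherical representation whose Satake parameter is \(\phi_{\psi_{m,v}}\), namely \(\tau_v^\circ\). Lemma~\ref{lem:mok-spherical-packet} then gives uniqueness.

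\emph{Main obstacle.} The main obstacle is the explicit unramified computation of \(f_v^\circ(1)\), especially in the split case. I would handle that case through the split realization of \cite[Appendix~B]{Yamana2020HilbertHermitian}, as in the proof of Lemma~\ref{lem:unramified-Whittaker-normalization}.
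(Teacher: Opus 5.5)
Your proof is correct and has the same skeleton as the paper's: sphericity of \(f_v^\circ\), nonvanishing, the Satake parameter read off from the ambient unramified principal series, and then Lemma~\ref{lem:mok-spherical-packet}. Your Whittaker ``cross-check'' is exactly the paper's nonvanishing argument, with \(T=1_m\). Because \(u_{m+1,1}(x,0,0)\in K_{m+1,v}\) for \(x\in M_{1,m}(\calO_{E_v})\), the identity of \cite[Lemma~10.1]{Yamana2020HilbertHermitian} gives \(w_{1_m,v}^{\zeta_v^-}(f_v^\circ)=C_{S,v}\vol(M_{1,m}(\calO_{E_v}))\,w_{S\oplus1_m,v}^{\zeta_v^+}(\phi_v^\circ)\). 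The last factor is \(1\) by Lemma~\ref{lem:unramified-Whittaker-normalization}.

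Your primary route, evaluating \(f_v^\circ(1)\) directly from \eqref{eq:local-FJ-integral}, is genuinely different, and it also works, more simply than you feared. Since \(u_{m+1,1}(X,0,t)=z_{m+1,1}(t)\,u_{m+1,1}(X,0,0)\) and \(\Phi_v^\circ\) confines \(X\) to the lattice, right \(K_{m+1,v}\)-invariance removes \(X\). The element \(\eta_{m+1,1}z_{m+1,1}(t)\) lies in the copy of \(\U(1,1)\) on the first coordinates. On that copy, \(\phi_v^\circ\) restricts to the normalized spherical section of \(I_{1,v}(\zeta_v^+|\cdot|_{E_v}^{m/2})\). So \(f_v^\circ(1)\) equals \(\vol(M_{1,m}(\calO_{E_v}))\,L(m+1,\zeta_v^+|_{F_v^\times})\) times a rank-one Jacquet integral. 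The case \(n=1\) of Lemma~\ref{lem:unramified-Whittaker-normalization} evaluates that integral as \(L(m+1,\zeta_v^+|_{F_v^\times})^{-1}\) up to a unit.

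The two routes buy different things. Yours needs only the definition of \(\beta_{S,v}^{\chi_-}\), and it exhibits \(f_v^\circ\) as an explicit multiple of the normalized spherical section. The paper's route reuses the Whittaker identity that it needs again in Section~\ref{sec:local-fj-descent}, and it gives the Whittaker value of \(f_v^\circ\) directly.

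One justification needs repair. The claim ``a spherical cyclic module has a unique spherical irreducible quotient'' is false in general: a direct sum of two nonisomorphic spherical representations is generated by the sum of their spherical vectors. What makes it true here is \(\dim I_{m,v}(\zeta_v^-)^{K_{m,v}}=1\). Every irreducible quotient of the module generated by \(f_v^\circ\) contains the nonzero image of \(f_v^\circ\), so it is spherical. All such quotients therefore carry the Satake parameter of that one-dimensional fixed line. This is the paper's argument, and it is the fact behind your own phrase ``the unique spherical subquotient.''
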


\begin{proof}
  By the Whittaker identity in \cite[Lemma~10.1]{Yamana2020HilbertHermitian},
  there is \(C_{S,v}\in\bbC^\times\) such that
  \begin{align*}
    w_{1_m,v}^{\zeta_v^-}(f_v^\circ)
     & =
    C_{S,v}
    \int_{M_{1,m}(E_v)}
    \overline{\Phi_v^\circ(x)}\,
    w_{S\oplus1_m,v}^{\zeta_v^+}
    \left(
    I_{m+1,v}(\zeta_v^+)
    (u_{m+1,1}(x,0,0))\phi_v^\circ
    \right)\,dx
    \\
     & =
    C_{S,v}
    \vol(M_{1,m}(\calO_{E_v}))
    w_{S\oplus1_m,v}^{\zeta_v^+}(\phi_v^\circ),
  \end{align*}
  since \(u_{m+1,1}(x,0,0)\in K_{m+1,v}\) for \(x\in M_{1,m}(\calO_{E_v})\).

  Since \(S\oplus1_m\) is integral and unimodular,
  Lemma~\ref{lem:unramified-Whittaker-normalization} gives
  \[
    w_{S\oplus1_m,v}^{\zeta_v^+}(\phi_v^\circ)=1.
  \]

  It follows that
  \[
    w_{1_m,v}^{\zeta_v^-}(f_v^\circ)\neq0,
  \]
  and hence \(f_v^\circ\neq0\).

  Since \(\phi_v^\circ\) and \(\Phi_v^\circ\) are spherical,
  \(f_v^\circ\) is \(K_{m,v}\)-fixed.
  As
  \[
    \dim I_{m,v}(\zeta_v^-)^{K_{m,v}}=1,
  \]
  every irreducible quotient of the subrepresentation generated by \(f_v^\circ\) is spherical with the same Satake parameter.
  Thus this subrepresentation has a unique irreducible quotient \(\tau_v^\circ\) up to isomorphism.

  Since \(\tau_v^\circ\) is spherical,
  \[
    \left.
    (\xi_{\std}\circ\phi_{\tau_v^\circ})
    \right|_{W_{E_v}}
    \simeq
    \left(
    \zeta_v^-\oplus(\zeta_v^-)^{-c}
    \right)
    \otimes
    \bigoplus_{j=0}^{m-1}
    |\,\cdot\,|_{E_v}^{(m-1)/2-j}.
  \]
  By~\eqref{eq:local-psi-parameter},
  this equals \(\left.\phi_{m,v}(h)\right|_{W_{E_v}}\).
  The last assertion follows from Lemma~\ref{lem:mok-spherical-packet}.
\end{proof}

\subsection{Identification of the global packets}

For odd \(r\),
let \(\calV_r(h)\) be the \(G_r(\bbA)\)-span of \(\Im(I_r^\kappa(h;\,\cdot))\) in the cuspidal spectrum.
For even \(m\),
define \(\calV_{m,S}(h)\) similarly using \(\Im(I_{m,S}^\kappa(h;\,\cdot))\).

\begin{cor}
  \label{cor:global-packet-identification}
  \begin{enumerate}
    \item If \(r\) is odd,
          every irreducible constituent \(\rho\) of \(\calV_r(h)\) belongs to \(\Pi_{\psi_r(h)}\),
          and
          \[
            L(s,\rho,\Std)
            =
            \prod_{j=0}^{r-1}
            L\left(
            s+\frac{r-1}{2}-j,
            ((\vartheta_r)^c)^{-1}\otimes\BC_{E/F}(\pi_h)
            \right).
          \]

    \item If \(m\) is even,
          every irreducible constituent \(\sigma\) of \(\calV_{m,S}(h)\) belongs to \(\Pi_{\psi_m(h)}\),
          and
          \[
            L(s,\sigma,\Std)
            =
            \prod_{j=0}^{m-1}
            L\left(
            s+\frac{m-1}{2}-j,
            ((\vartheta_m)^c)^{-1}\otimes\BC_{E/F}(\pi_h)
            \right).
          \]
  \end{enumerate}
\end{cor}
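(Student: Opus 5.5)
The plan is a strong multiplicity one argument at almost all places, combined with Mok's classification. Let \(\rho\) (odd \(r\)) or \(\sigma\) (even \(m\)) be an irreducible constituent of \(\calV_r(h)\), respectively \(\calV_{m,S}(h)\). Both spaces lie in the cuspidal spectrum, so every such constituent occurs in \(L^2_{\disc}\). By Proposition~\ref{prop:mok-global-multiplicity}, it therefore lies in \(\Pi_\psi\) for some \(\psi\in\Psi_2(G_N,\xi_{\std})\). What remains is to show \(\psi=\psi_N(h)\), which I would do by comparing Satake parameters at almost all places.

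Let \(\Sigma\) be the finite set of places that are archimedean, not good, ramified for \(\chi_-\), or (for even \(m\)) where \(S\notin\calO_{F_v}^\times\).
\begin{itemize}
\item \emph{Odd case.} For \(v\notin\Sigma\), Yamana's description identifies the \(v\)-component of \(\Pi_r(h)\) with a subquotient of \(I_{r,v}(\zeta_v^+)\) having a spherical vector. Restriction to \(G_r(F_v)\) keeps a \(K_{r,v}\)-fixed vector. Hence \(\rho_v\) is spherical, with Satake parameter that of the unique spherical constituent of \(I_{r,v}(\zeta_v^+)\), namely \((\zeta_v^+\oplus(\zeta_v^+)^{-c})\otimes\bigoplus_j|\cdot|^{(r-1)/2-j}\).
\item \emph{Even case.} Proposition~\ref{prop:local-satake-even} does the corresponding work. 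The image of \(I_{m,S}^\kappa\) is spanned by forms built from \(\phi\otimes\Phi\). These can be taken pure tensors with \(\phi_v=\phi_v^\circ\) and \(\Phi_v=\Phi_v^\circ\) at \(v\notin\Sigma\). Through the local factorization of \(\calJ_T^S\) via \(\beta_{S,v}^{\chi_-}\), the \(v\)-component of \(\sigma\) is then a spherical subquotient of the span of \(f_v^\circ\), hence isomorphic to \(\tau_v^\circ\).
\end{itemize}

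The step needing care is making this factorization precise. The Fourier coefficient functional \(\calJ_T^S\) from Lemma~\ref{lem:archimedean-FJ-factor} factors into local functionals \(\int\overline{\Phi_v}\,w_{S\oplus T,v}(\cdots)\). By Yamana's Whittaker identity (\cite[Lemma~10.1]{Yamana2020HilbertHermitian}, already used in Proposition~\ref{prop:local-satake-even}), each local functional equals a constant times \(w_{T,v}^{\zeta_v^-}\circ\beta_{S,v}^{\chi_-}\). Consequently the map \(\widetilde V_{m,S}(h)\to\calS_m^{\ell_m,\eta_m}\) factors, locally at \(v\notin\Sigma\), through \(\beta_{S,v}^{\chi_-}\). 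Its image at \(v\) is then a quotient of the \(G_m(F_v)\)-span of \(f_v^\circ\), so every irreducible constituent has local component \(\tau_v^\circ\). An alternative is a Hecke-operator argument: \(\phi_v^\circ\otimes\Phi_v^\circ\) is \(K_{m,v}\)-fixed, so the spherical Hecke algebra acts on it through a character. That character can be computed from \(f_v^\circ\), because \(\beta_{S,v}^{\chi_-}\) is injective on the spherical line, as shown in the proof of Proposition~\ref{prop:local-satake-even}.

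Finally, Proposition~\ref{prop:local-satake-even} shows that for \(v\notin\Sigma\) the Satake parameter of \(\sigma_v\), composed with \(\xi_{\std}\), agrees with \(\phi_{m,v}(h)|_{W_{E_v}}\), and likewise for \(\rho_v\). Hence the base changes of \(\psi\) and \(\psi_N(h)\) to \(\GL_{2N}(\bbA_E)\) have the same unramified components outside a finite set. The strong multiplicity one theorem for isobaric representations (Jacquet--Shalika) then forces \(\psi=\psi_N(h)\), which is a legitimate parameter by Proposition~\ref{prop:global-parameter-all-cases}. The \(L\)-function formula follows from the definition of the standard \(L\)-function through \(\xi_{\std}\) and the shape \(\mu[N]\) of the parameter. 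The local factors at the places in \(\Sigma\) are defined by the packet parameter, so the displayed product holds as an identity of Langlands \(L\)-functions. The main obstacle I expect is the factorization step at the places outside \(\Sigma\).
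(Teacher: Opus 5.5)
Your route is the paper's. Mok's multiplicity formula places each constituent in some \(\Pi_\psi\). The unramified components are then matched with those of \(\psi_r(h)\) and \(\psi_m(h)\), via Yamana in odd degree and Proposition~\ref{prop:local-satake-even} in even degree, and Jacquet--Shalika identifies \(\psi\). Your factorization of \(\calJ_T^S\) through \(\beta_{S,v}^{\chi_-}\) using Yamana's Lemma~10.1 is a sensible way to make the paper's terse ``at almost every finite place'' precise. If \(\beta_{S,v}^{\chi_-}(w_v)=0\), then every Fourier coefficient of \(\widetilde I_{m,S}^{\kappa}(h;w_v\otimes w^v)\) vanishes, so the lift factors through \(\Im\beta_{S,v}^{\chi_-}\otimes\widetilde V^v\).

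However, the exceptional set must depend on the constituent, and as written your local step is false. You fix \(\Sigma\) in advance and then assert three things for \emph{every} \(v\notin\Sigma\): that \(\rho_v\) is spherical, that \(\sigma_v\simeq\tau_v^\circ\), and that the image is spanned by pure tensors with component \(\phi_v^\circ\otimes\Phi_v^\circ\) at \(v\).

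This fails at a good place \(v\in\Sigma_{\pr,2}\), for example any good inert place when \(h\) has CM by \(E\). There \(\Pi_{r,v}(h)|_{G_{r,v}}=A_{r,v}^{+}\oplus A_{r,v}^{-}\) with only \(A^{+}\) spherical. So the existence of a \(K_{r,v}\)-fixed vector in the restriction says nothing about which summand \(\rho_v\) is. Likewise, Theorem~\ref{thm:exact-finite-decomposition} contains constituents \(\rho_m(\delta^S)\) whose component at such a \(v\) is the nonspherical \(A_{m,v}^{-}(\zeta_v^-)\).

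The fix is short:
\begin{enumerate}
\item Fix \(\sigma\) first. It is \(K_{m,v}\)-unramified outside a finite set that depends on \(\sigma\).
\item At a good \(v\) outside that set, your factorization shows that \(\sigma_v\) is an irreducible quotient of \(\Im\beta_{S,v}^{\chi_-}\subset I_{m,v}(\zeta_v^-)\).
\item Taking \(K_{m,v}\)-invariants is exact and \(\dim I_{m,v}(\zeta_v^-)^{K_{m,v}}=1\), so a spherical irreducible subquotient is unique. Hence \(\sigma_v\simeq\tau_v^\circ\).
\item The same argument with \(I_{r,v}(\zeta_v^+)\) handles \(\rho_v\).
\end{enumerate}
Agreement outside a \(\sigma\)-dependent finite set is all Jacquet--Shalika needs.

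Finally, drop the Hecke-operator alternative. The vector \(\phi_v^\circ\otimes\Phi_v^\circ\) is not a Hecke eigenvector in \(\widetilde V_{m,S}(h)\), because the \(K_{m,v}\)-fixed space of \(\Pi_{m+1,v}(h)\otimes\overline{\omega_{S_v}}\) is far from one-dimensional. The eigencharacter only appears after applying \(\beta_{S,v}^{\chi_-}\), which is your main argument again.
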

\begin{proof}
  At almost every finite place,
  the local parameters are those attached to \(\psi_r(h)\) in odd degree by \cite[Theorem~1.1 and Appendix~A]{Yamana2020HilbertHermitian},
  and to \(\psi_m(h)\) in even degree by Proposition~\ref{prop:local-satake-even}.
  Strong multiplicity one for \(\GL_n\) \cite{JacquetShalika1981EulerII},
  together with the uniqueness of the isobaric decomposition,
  identifies the corresponding global isobaric representations with those attached to \(\psi_r(h)\) and \(\psi_m(h)\).
  Proposition~\ref{prop:mok-global-multiplicity} then gives the packet assertions.
  The formulas for the standard \(L\)-functions follow from the definitions of \(\psi_r(h)\) and \(\psi_m(h)\).
\end{proof}
\section{Local constituents and Fourier--Jacobi descent}
\label{sec:local-fj-descent}

Having identified the global packets,
we now determine the local constituents of the odd-degree lift and their descent to even degree.
The local analysis has three parts:
degenerate principal series at nonsplit places \(v\) where \(\pi_{h,v}\) is not supercuspidal,
theta correspondence at places \(v\) where \(\pi_{h,v}\) is dihedral supercuspidal,
and the remaining cases,
where the relevant local packet is a singleton.
For the global data fixed above,
\(m\) is even and \(S_v\) denotes the image of \(S\) in \(F_v^\times\).

\begin{dfn}\label{def:local-FJ-module}
  For a smooth representation \(\rho_v\) of \(G_{m+1}(F_v)\),
  define its local Fourier--Jacobi module of index \(S_v\) by
  \[
    \FJ_{S_v}(\rho_v)
    =\left(\rho_v\otimes\overline{\omega_{S_v}}\right)_{U_{m+1,1}(F_v)}.
  \]
  Here \(\overline{\omega_{S_v}}\) is the complex-conjugate Weil representation,
  \(U_{m+1,1}(F_v)\) acts diagonally,
  and the subscript denotes the maximal quotient on which this diagonal action is trivial.
\end{dfn}

If \(\rho_v\) is a \(G_{m+1}(F_v)\)-subrepresentation of \(I_{m+1,v}(\zeta_{m+1,v})\),
the restriction of \(\beta_{S,v}^{\chi_-}\) in~\eqref{eq:local-FJ-integral} is invariant under the diagonal action of \(U_{m+1,1}(F_v)\).
It therefore factors through a \(G_m(F_v)\)-equivariant map
\[
  \FJ_{S_v}(\rho_v)\longrightarrow I_{m,v}(\zeta_{m,v}).
\]
Hence the images computed below are quotients of the corresponding Fourier--Jacobi modules.

\subsection{Restriction to the unitary group}

Let \(r\) be odd and write
\[
  \Pi_r(h)={\bigotimes_{v\in\bfh}}'\Pi_{r,v}(h).
\]
For a finite place \(v\),
put \(\widetilde G_{r,v}=\GU_{r,r}(F_v)\),
\(G_{r,v}=\U_{r,r}(F_v)\),
and \(H_{r,v}=\widetilde Z_{r,v}G_{r,v}\),
where \(\widetilde Z_{r,v}\) is the center of \(\widetilde G_{r,v}\).
Since
\[
  \widetilde G_{r,v}/H_{r,v}
  \simeq
  F_v^\times/N_{E_v/F_v}(E_v^\times),
\]
Clifford theory shows that \(\Pi_{r,v}(h)|_{G_{r,v}}\) is multiplicity-free and has at most two irreducible constituents.

At a good place \(v\),
let \(\phi_{r,v}^\circ\) be the spherical vector of \(\Pi_{r,v}(h)\) corresponding to the section with value \(1\) at the identity in the degenerate principal-series realization of~\cite[Proposition~6.3]{Yamana2020HilbertHermitian}.
If \(B_v\in\Her_r(F_v)\) is integral and unimodular,
then
\begin{equation}
  \label{eq:spherical-Whittaker-normalization}
  \calW_{B_v,v}(\phi_{r,v}^\circ)=1.
\end{equation}
Indeed,
the comparison factor in~\cite[Corollary~6.4]{Yamana2020HilbertHermitian} is
\[
  \omega_{h,v}(-1)^{(r-1)/2}
  \mu_v((-1)^{(r-1)/2}\det B_v)^{-1}=1,
\]
since \(\omega_{h,v}\) and \(\mu_v\) are unramified and \(\det B_v\in\calO_{F_v}^\times\).
Thus \(\calW_{B_v,v}\) is identified with \(w_{B_v,v}^{\zeta_{r,v}}\) on this realization of \(\Pi_{r,v}(h)\).
The assertion therefore follows from Lemma~\ref{lem:unramified-Whittaker-normalization}.

For nondegenerate \(B\in\Her_r(F_v)\),
put
\[
  \epsilon_{r,v}(B)
  =\chi_v\left((-1)^{r(r-1)/2}\det B\right).
\]
For a smooth representation \(\rho\) of \(G_{r,v}\),
put
\[
  \Wh_{B,v}(\rho)
  =\Hom_{N_r(F_v)}(\rho,\psi_{B,v}),
\]
and
\[
  \psi_{B,v}(n_r(X))=\psi_v(\tr(BX)).
\]

We first describe the restriction in the supercuspidal case.

\begin{lem}
  \label{lem:odd-supercuspidal-properties}
  Let \(r\) be odd and let \(v\) be a nonsplit finite place such that \(\pi_{h,v}\) is supercuspidal.
  \begin{enumerate}
    \item If \(\pi_{h,v}\not\simeq\pi_{h,v}\otimes\chi_v\),
          then \(\left.\Pi_{r,v}(h)\right|_{G_{r,v}}\) is irreducible and is the unique member of \(\Pi_{\psi_{r,v}(h)}\).
    \item If \(\pi_{h,v}\simeq\pi_{h,v}\otimes\chi_v\),
          then \(\left.\Pi_{r,v}(h)\right|_{G_{r,v}}\) is the direct sum of two nonisomorphic irreducible members of \(\Pi_{\psi_{r,v}(h)}\).
  \end{enumerate}
  In both cases,
  every irreducible constituent \(\rho\) of the restriction satisfies
  \[
    \Wh_{B,v}(\rho)\neq0
  \]
  for some nondegenerate \(B\in\Her_r(F_v)\).
  In the first case,
  this holds for every nondegenerate \(B\in\Her_r(F_v)\).
\end{lem}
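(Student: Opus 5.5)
The argument has two halves: first count the constituents of the restriction by Clifford theory, then place them in the packet by matching parameters. Throughout, \(v\) is nonsplit, \(\pi_{h,v}\) is supercuspidal, and \(r\) is odd.

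\emph{Step 1: the similitude representation.} By Yamana's local analysis in \cite[\S\S~4--7]{Yamana2020HilbertHermitian}, \(\Pi_{r,v}(h)\) is the unique irreducible subrepresentation of the parabolic induction from
\[
  (((\vartheta_v^+)^c)^{-1}\otimes\pi_{h,E,v})^{\boxtimes(r-1)/2}\boxtimes((\vartheta^+_v)^{-1}\boxtimes\pi_{h,v}).
\]
It carries the nonzero Shalika functionals \(\calW_{B_v,v}\), which satisfy the equivariance recalled before Theorem~\ref{thm:odd-degree-lift}.

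\emph{Step 2: counting constituents.} Clifford theory, as noted above, makes \(\Pi_{r,v}(h)|_{G_{r,v}}\) multiplicity-free with at most two constituents. There are two constituents exactly when \(\Pi_{r,v}(h)\simeq\Pi_{r,v}(h)\otimes(\chi_v\circ\nu)\). Twisting by \(\chi_v\circ\nu\) has the same effect as replacing \(\pi_{h,v}\) by \(\pi_{h,v}\otimes\chi_v\) in the \(\widetilde G_1\)-factor; the \(\GL_2(E_v)\)-factors are unchanged because \(\chi_v\circ N_{E_v/F_v}=1\). Since the inducing data determine \(\Pi_{r,v}(h)\) up to the Weyl-group action, and \(\widetilde G_1\simeq\GL_2\times_{\mathbb G_m}\Res\GL_1\) is rigid, the twist is trivial if and only if \(\pi_{h,v}\simeq\pi_{h,v}\otimes\chi_v\). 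Equivalently, one can compare the Jacquet modules along \(\widetilde P_r\), where \(\pi_{h,v}\) appears as a supercuspidal support. This gives the dichotomy in (1) versus (2).

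\emph{Step 3: packet membership.} This step is the main obstacle.

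In case (1), \(\BC_{E_v/F_v}(\pi_{h,v})\) is supercuspidal, so \(\phi_{r,v}(h)\) restricted to \(W_{E_v}\) is an irreducible two-dimensional representation tensored with \([r]\). The centralizer of such a parameter in \(\widehat G_r\) is \(\{\pm1\}\). Hence \(\calS_{\psi_{r,v}(h)}\) is trivial and Mok's packet \(\Pi_{\psi_{r,v}(h)}\) is a singleton. To identify the irreducible restriction \(\rho\) with that single member, I would use the global packet statement, Corollary~\ref{cor:global-packet-identification}(1). Choose a globalization in which \(v\) is the given place. Some irreducible constituent of \(\calV_r(h)\) has local component at \(v\) equal to a constituent of \(\Pi_{r,v}(h)|_{G_{r,v}}\), and that constituent lies in \(\Pi_{\psi_{r,v}(h)}\) by Proposition~\ref{prop:mok-global-multiplicity}. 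In case (1) there is only one constituent, so it is the packet member.

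In case (2), the same globalization shows at least one constituent lies in the packet. Conjugating by \(d_r(\xi)\) with \(\chi_v(\xi)=-1\) interchanges the two constituents. Conjugation by \(\widetilde G_{r,v}\) preserves Mok's packets, because the packet depends only on the parameter up to the outer automorphism induced by the similitude group. So the other constituent lies in the packet as well.

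\emph{Step 4: Whittaker nonvanishing.} Restrict \(\calW_{B,v}\) to \(N_r(F_v)\). It is a nonzero element of \(\Hom_{N_r}(\Pi_{r,v}(h),\psi_{B,v})\) for every nondegenerate \(B\) (\cite[Prop.~4.5(2), 6.3]{Yamana2020HilbertHermitian}), so it is nonzero on at least one constituent. The equivariance formula gives \(\calW_B\circ\Pi(d_r(\xi))\propto\calW_{\xi^{-1}B}\).

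\begin{itemize}
  \item In case (1) there is a single constituent, so \(\Wh_{B,v}(\rho)\neq0\) for every nondegenerate \(B\).
  \item In case (2), let \(\rho_1\) be a constituent on which \(\calW_{B,v}\neq0\). The element \(d_r(\xi)\) interchanges the constituents, so \(\calW_{\xi^{-1}B,v}\) is nonzero on \(\rho_1^{d_r(\xi)}=\rho_2\). Hence each constituent has some nondegenerate \(B\) with \(\Wh_{B,v}\neq0\).
\end{itemize}
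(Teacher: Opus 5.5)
Your argument is correct, but it places the constituents in the packet by a different route from the paper.

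\textbf{The paper's route.} The paper proves the Whittaker nonvanishing first (essentially your Step~4) and then uses it to place \emph{each} constituent \(\rho\) in the packet directly. Weak approximation gives \(B\in\Her_r(F)_{>0}\) whose localization lies in the \(\GL_r(E_v)\)-orbit of some \(B_v\) with \(\Wh_{B_v,v}(\rho)\neq0\). The factorization of \(\calW_B\) then produces a global constituent of \(\Pi_r(h)|_{G_r(\bbA_\bfh)}\) with local component \(\rho\) on which \(\calW_B\neq0\). That constituent survives in the lift, so Corollary~\ref{cor:global-packet-identification} applies to \(\rho\) itself. Irreducibility in case~(1) then follows from the singleton packet plus multiplicity-freeness. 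Clifford theory is needed only in the easy direction \(\pi_{h,v}\simeq\pi_{h,v}\otimes\chi_v\Rightarrow\Pi_{r,v}(h)\simeq\Pi_{r,v}(h)\otimes(\chi_v\circ\nu)\).

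\textbf{Your route.} You settle the dichotomy by Clifford theory in both directions. You use the global argument for only one constituent; this needs only the nonvanishing of the lift, and no globalization beyond the given \(h\). You then transport packet membership to the other constituent by conjugation by \(d_r(\xi)\). This trades weak approximation for an input about Mok's packets: their invariance under conjugation by \(\widetilde G_{r,v}\).

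\textbf{The conjugation-invariance input.} This invariance is true, but the reason you give is inaccurate. Conjugation by \(d_r(\xi)\) is not an outer automorphism. It comes from an element of \(G_{r,\mathrm{ad}}(F_v)\), so it is inner over \(\overline{F}_v\) and acts trivially on parameters. Its only effect is to change the Whittaker datum: the packet as a multiset is unchanged, and the pairing is twisted by a character of \(\calS_{\psi_{r,v}(h)}\). You should state and cite it in that form.

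\textbf{The ``only if'' half of Step~2.} This is only sketched (``rigid''). To make it precise, note that flipping a \(\GL_2(E_v)\)-block \(\tau\) twists the \(\widetilde G_1\)-component by \(\omega_\tau|_{F_v^\times}\circ\nu\). Since \(\vartheta_v^+|_{F_v^\times}=\omega_{h,v}\), this is a real power of \(|\nu|\), and comparing central characters then rules out \(\chi_v\circ\nu\). This half is dispensable, however. In case~(1), your conjugation argument would put both would-be constituents into the singleton packet, and multiplicity-freeness then already forces irreducibility, exactly as in the paper.
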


\begin{proof}
  Fix an irreducible constituent \(\rho\) of \(\left.\Pi_{r,v}(h)\right|_{G_{r,v}}\).
  The degenerate Whittaker functional in~\cite[Proposition~4.5(2)]{Yamana2020HilbertHermitian} is nonzero and satisfies the Shalika equivariance in~\cite[Definition~3.1 and Proposition~7.2]{Yamana2020HilbertHermitian}.
  Together with the action of \(\widetilde G_{r,v}\) on the constituents of the restriction,
  this gives a nondegenerate \(B_v\in\Her_r(F_v)\) such that
  \[
    \Wh_{B_v,v}(\rho)\neq0.
  \]

  We next show that \(\rho\) belongs to the asserted packet.
  By weak approximation,
  choose \(B\in\Her_r(F)_{>0}\) whose localization at \(v\) lies in the \(\GL_r(E_v)\)-orbit of \(B_v\).
  The factorization of the global Shalika functional gives an irreducible constituent of \(\left.\Pi_r(h)\right|_{G_r(\bbA_\bfh)}\) whose local component at \(v\) is \(\rho\) and which contains a vector \(\phi\) satisfying
  \[
    \calW_B(\phi)\neq0.
  \]
  Hence \(I_r^\kappa(h;\phi)\neq0\),
  so this constituent occurs in the finite part of the odd-degree lift.
  Corollary~\ref{cor:global-packet-identification} now implies \(\rho\in\Pi_{\psi_{r,v}(h)}\).

  Suppose first that \(\pi_{h,v}\not\simeq\pi_{h,v}\otimes\chi_v\).
  The parameter \(\psi_{r,v}(h)\) consists of a single simple summand attached to a supercuspidal representation of \(\GL_2(E_v)\),
  and its component group is trivial.
  Its packet therefore has a single member by~\cite[Theorem~2.5.1(b), (8.1.1), and (8.2.7)]{Mok2015Endoscopic}.
  Since the restriction is multiplicity-free,
  it is irreducible and equals this member.
  As \(r\) is odd,
  all nondegenerate Hermitian forms of rank \(r\) over \(E_v/F_v\) lie in a single similitude orbit.
  Shalika equivariance therefore gives the nonvanishing for every nondegenerate \(B\).

  Finally,
  suppose that \(\pi_{h,v}\simeq\pi_{h,v}\otimes\chi_v\).
  The identity \(\pi_{h,v}\simeq\pi_{h,v}\otimes\chi_v\) implies
  \[
    \Pi_{r,v}(h)
    \simeq\Pi_{r,v}(h)\otimes(\chi_v\circ\nu).
  \]
  Clifford theory then shows that the restriction to \(G_{r,v}\) is the direct sum of two nonisomorphic irreducible representations.
\end{proof}

\subsection{Nonsupercuspidal representations}

We use the degenerate principal series defined in Subsection~\ref{subsec:common-local-setup}.
Consider the nonsplit places \(v\) for which \(\pi_{h,v}\) is not supercuspidal.
Let \(B_2\subset\GL_2\) be the upper triangular Borel subgroup.
Choose a quasi-character \(\mu_v:F_v^\times\rightarrow\bbC^\times\) such that \(\pi_{h,v}\) is the unique generic irreducible subquotient of
\[
  \Ind_{B_2(F_v)}^{\GL_2(F_v)}\left(\mu_v\boxtimes\omega_{h,v}\mu_v^{-1}\right),
\]
where,
when \(\pi_{h,v}\) is a twist of the Steinberg representation,
the two inducing characters are ordered so that
\begin{equation}
  \label{eq:steinberg-xi}
  \mu_v^2\omega_{h,v}^{-1}=|\cdot|_{F_v}.
\end{equation}
Put
\begin{align*}
  \zeta_{m+1,v}
   & = (\vartheta_v^+)^{-c}
  (\mu_v\circ N_{E_v/F_v}),                    \\
  \zeta_{m,v}
   & = \zeta_{m+1,v}\chi_{-,v}^{-1},           \\
  \xi_v
   & = \left.\zeta_{m+1,v}\right|_{F_v^\times}
  = \mu_v^2\omega_{h,v}^{-1}.
\end{align*}

\begin{prop}
  \label{prop:odd-nonsupercuspidal-structure}
  Let \(r\) be odd and let \(v\) be a nonsplit finite place such that \(\pi_{h,v}\) is not supercuspidal.
  Retain the notation \(\zeta_{r,v}\) and \(\xi_v\) introduced above.
  \begin{enumerate}
    \item If \(\xi_v\notin\{\chi_v,|\,\cdot\,|_{F_v}\}\),
          then
          \[
            \left.\Pi_{r,v}(h)\right|_{G_{r,v}}
            \simeq I_{r,v}(\zeta_{r,v}),
          \]
          and this representation is irreducible.

    \item If \(\xi_v=|\,\cdot\,|_{F_v}\),
          then
          \[
            \left.\Pi_{r,v}(h)\right|_{G_{r,v}}
            \simeq A_{r,v}^{+}(\zeta_{r,v}),
          \]
          where \(A_{r,v}^{+}(\zeta_{r,v})\) is the unique irreducible subrepresentation of \(I_{r,v}(\zeta_{r,v})\).
          Moreover,
          for every nondegenerate \(B\in\Her_r(F_v)\),
          \[
            \dim\Wh_{B,v}\left(A_{r,v}^{+}(\zeta_{r,v})\right)=1.
          \]

    \item If \(\xi_v=\chi_v\),
          then
          \[
            \left.\Pi_{r,v}(h)\right|_{G_{r,v}}
            \simeq I_{r,v}(\zeta_{r,v})
            \simeq A_{r,v}^{+}(\zeta_{r,v})\oplus A_{r,v}^{-}(\zeta_{r,v}).
          \]
          The two summands are irreducible.
          For \(\delta\in\{\pm1\}\) and every nondegenerate \(B\in\Her_r(F_v)\),
          their signs are determined by
          \[
            \dim\Wh_{B,v}\left(A_{r,v}^{\delta}(\zeta_{r,v})\right)
            =\begin{cases}
              1 & \text{if }\epsilon_{r,v}(B)=\delta, \\
              0 & \text{otherwise}.
            \end{cases}
          \]
  \end{enumerate}
\end{prop}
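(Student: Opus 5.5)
Our plan is to combine Yamana's realization of \(\Pi_{r,v}(h)\) inside the degenerate principal series of the similitude group with the known structure of degenerate principal series of \(\U_{r,r}\). By \cite[Propositions~6.2--6.3]{Yamana2020HilbertHermitian}, \(\Pi_{r,v}(h)\) is the unique irreducible subrepresentation of a degenerate principal series of \(\widetilde G_{r,v}\) attached to \((\vartheta_v^+)^{-1}\) and \(\mu_v\). The first step is to check that restricting functions from \(\widetilde G_{r,v}\) to \(G_{r,v}\) gives an isomorphism of this induced representation onto \(I_{r,v}(\zeta_{r,v})\). The justification is that \(P_r\cdot G_{r,v}=\widetilde G_{r,v}\), since \(d_r(\xi)\in P_r\) for every \(\xi\). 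Under this restriction the inducing character becomes \(\zeta_{r,v}\), and its restriction to \(F_v^\times\) is \(\xi_v=\mu_v^2\omega_{h,v}^{-1}\); this uses \(\vartheta_v^+(\vartheta_v^+)^c=\omega_{h,v}\circ N_{E_v/F_v}\). Consequently \(\Pi_{r,v}(h)|_{G_{r,v}}\) is a \(G_{r,v}\)-subrepresentation of \(I_{r,v}(\zeta_{r,v})\) that is stable under the conjugation action of \(d_r(\xi)\).

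Next we invoke the reducibility results of Kudla--Sweet (and Lee--Zhu) for \(I_{r,v}(\zeta)\) of \(\U_{r,r}\) at the point \(s=0\). These give the following picture for odd \(r\). First, \(I_{r,v}(\zeta)\) is irreducible unless \(\zeta|_{F_v^\times}\in\{\chi_v,|\cdot|_{F_v}^{\pm1}\}\); the possibilities \(|\cdot|_{F_v}^{\pm j}\) with \(j\geq2\) are excluded by unitarity and the Ramanujan-type bounds for \(\pi_{h,v}\) (only the Steinberg twist gives \(|\cdot|_{F_v}\)), though this needs care. Second, when \(\zeta|_{F_v^\times}=\chi_v\), the representation is the direct sum of two irreducible pieces \(A^\pm\), distinguished by which similitude class of the local Hermitian space they come from via theta lifting. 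Third, when \(\zeta|_{F_v^\times}=|\cdot|_{F_v}\), it has a unique irreducible subrepresentation \(A^+\). Case~(1) then follows at once. In case~(3), the induced representation of \(\widetilde G_{r,v}\) is irreducible, because it is irreducible at generic points and Yamana's \(\Pi_{r,v}(h)\) is the whole space here. So its restriction is the full \(I_{r,v}(\zeta_{r,v})=A^+\oplus A^-\). In case~(2), the restriction is a nonzero subrepresentation, and by uniqueness it contains \(A^+\). To show it equals \(A^+\), we use Clifford theory: the restriction is a sum of \(d_r(\xi)\)-conjugates of one constituent, and \(A^+\) is \(d_r(\xi)\)-stable because it is the unique irreducible subrepresentation.

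For the Whittaker assertions we use the Rallis--Kudla description of the \(N_r\)-twisted Jacquet modules of \(I_{r,v}(\zeta)\). The Bruhat filtration shows that \(\Wh_{B,v}(I_{r,v}(\zeta))\) is one-dimensional for nondegenerate \(B\), since only the open cell contributes, and the contribution is spanned by the Jacquet integral \(w_{B,v}^{\zeta}\) of \eqref{eq:local-Jacquet-functional}. In case~(3), each \(B\) therefore pairs with exactly one summand \(A^\delta\). Identifying \(A^\delta\) with the theta lift from the \(r\)-dimensional skew-Hermitian space \(V^\delta\) of sign \(\delta\), the open orbit of \(N_r\)-characters supported on \(A^\delta\) consists of those \(B\) represented by \(V^\delta\), that is, those with \(\epsilon_{r,v}(B)=\delta\). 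This is where the normalization \((-1)^{r(r-1)/2}\) enters, and we will fix the labeling \(A^\pm\) accordingly. In case~(2), the quotient \(I/A^+\) is a Langlands quotient supported on smaller-rank \(B\), so it has no nondegenerate twisted Jacquet module; hence \(\dim\Wh_{B,v}(A^+)=1\) for all nondegenerate \(B\).

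The main obstacle is this last step, namely pinning down the sign convention linking \(\delta\) to \(\epsilon_{r,v}(B)\), together with the vanishing of nondegenerate Whittaker functionals on the quotient in case~(2). For both, we would first try to quote Kudla--Sweet's computation of the Jacquet modules of the constituents, and then check the normalization against Yamana's nonvanishing of \(\calW_{B_v,v}\) from \cite[Proposition~4.5(2)]{Yamana2020HilbertHermitian}.
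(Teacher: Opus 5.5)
Your outline is correct in substance and has the same skeleton as the paper's proof. That proof is a chain of citations:
\begin{itemize}
\item Yamana's realization of \(\Pi_{r,v}(h)|_{G_{r,v}}\) inside \(I_{r,v}(\zeta_{r,v})\) (all of it in cases (1) and (3), the unique irreducible subrepresentation in case (2));
\item the Kudla--Sweet structure theorem for odd \(r\);
\item Karel's bound \(\dim\Wh_{B,v}(I_{r,v}(\zeta))\leq1\);
\item Yamana's Proposition~5.4(2)--(4) for the Whittaker supports of the constituents.
\end{itemize}
Where the paper quotes that last proposition, you derive the supports yourself. In case (3) your argument is sound and explains where \(\epsilon_{r,v}(B)\) comes from: the twisted Jacquet module of \(\calS(V^r)\) is \(C_c^\infty(\Omega_B)\), which is nonzero only when \(V\) represents \(B\), and you combine this with \(I_{r,v}(\zeta_{r,v})=R(V^+)\oplus R(V^-)\) and multiplicity one. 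Your Clifford-theory step in case (2) is also fine, since the semisimple restriction lies in the socle \(A^+_{r,v}(\zeta_{r,v})\).

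Three points need repair.

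\emph{The quotient in case (2).} The quotient \(I_{r,v}(\zeta_{r,v})/A^+_{r,v}(\zeta_{r,v})\) is not a single Langlands quotient once \(r\geq3\). The theta lifts \(R_r(V')\) of the trivial representation from the two \((r-1)\)-dimensional Hermitian spaces (with splitting character \(\zeta_{r,v}^{-1}|\cdot|_{E_v}^{1/2}\)) are inequivalent irreducible subrepresentations of \(I_{r,v}(\zeta_{r,v}^{-1})\). Hence \(I_{r,v}(\zeta_{r,v})\simeq I_{r,v}(\zeta_{r,v}^{-1})^\vee\) has two irreducible quotients.

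Your conclusion survives, because every constituent of the quotient comes from a space of dimension \(r-1<r\) and so has no nondegenerate twisted Jacquet module. But this has to be read off from Kudla--Sweet constituent by constituent.

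A shorter route needs no composition series. By Yamana's Proposition~5.4(3) and Corollary~5.5(3), \(A^+_{r,v}(\zeta)=\Im M_{r,v}(\zeta^\vee)\). The functional equation recalled in the proof of Lemma~\ref{lem:FJ-intertwining-compatibility} gives
\(w_{B,v}^{\zeta}\circ M_{r,v}(\zeta^\vee)=c_{r,v}(\zeta^\vee)\,\zeta^\vee((\det B)^{-1})\,w_{B,v}^{\zeta^\vee}\),
with \(c_{r,v}(\zeta^\vee)=e_{r,v}(\zeta^\vee)/b_{r,v}(\zeta)\in\bbC^\times\). This holds because \(b_{r,v}(\zeta)=\prod_{j=1}^{r}L(j+1,\chi_v^{r+j})\) is finite and nonzero when \(\zeta|_{F_v^\times}=|\cdot|_{F_v}\). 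Hence \(w_{B,v}^{\zeta}\neq0\) on \(A^+\) for every nondegenerate \(B\).

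\emph{Irreducibility in case (3).} The phrase ``irreducible at generic points'' says nothing at the special point. The real reason the similitude representation is irreducible is that \(d_r(\xi)\) with \(\chi_v(\xi)=-1\) conjugates \(\psi_{B,v}\) into \(\psi_{\xi^{-1}B,v}\). For odd \(r\), \(\epsilon_{r,v}(\xi^{-1}B)=-\epsilon_{r,v}(B)\), so \(d_r(\xi)\) interchanges \(A^+\) and \(A^-\) by the Whittaker characterization. That characterization concerns only \(I_{r,v}(\zeta_{r,v})\) and can be proved first.

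\emph{The reducibility list.} For odd \(r\) the reducibility points also include \(\zeta|_{F_v^\times}=\chi_v|\cdot|_{F_v}^{\pm2},\ldots\). These, and \(|\cdot|_{F_v}^{-1}\), are excluded by the unitarity bound for \(\pi_{h,v}\) together with the ordering~\eqref{eq:steinberg-xi}. No Ramanujan-type input is needed.
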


\begin{proof}
  By \cite[Sections~5--6 and Proposition~5.4]{Yamana2020HilbertHermitian},
  the restriction is realized as the unique irreducible subrepresentation of \(I_{r,v}(\zeta_{r,v})\) when \(\xi_v=|\,\cdot\,|_{F_v}\),
  and as the whole degenerate principal series in the other two cases.
  The decompositions and irreducibility assertions follow from \cite[Proposition~5.4]{Yamana2020HilbertHermitian} and \cite[Theorem~1.2]{KudlaSweet1997Degenerate},
  since \(r\) is odd.
  In the second case,
  \cite[Proposition~5.4(3),(4)]{Yamana2020HilbertHermitian} shows that the space of degenerate Whittaker functionals attached to \(B\) is nonzero for every nondegenerate \(B\),
  and its dimension is one by~\cite{Karel1979Functional}.
  In the third case,
  the characterization of the two constituents by the nonvanishing of their spaces of degenerate Whittaker functionals is given in \cite[Proposition~5.4(2),(4)]{Yamana2020HilbertHermitian}.
  By~\cite{Karel1979Functional},
  these spaces are again one-dimensional whenever they are nonzero.
\end{proof}

We now establish compatibility with the standard intertwining operators and determine the image of each odd-degree constituent.
We retain the local data \(E_v/F_v\),
\(S_v\),
\(\psi_v\),
and \(\chi_{-,v}\) fixed above.
For a quasi-character \(\zeta_v:E_v^\times\rightarrow\bbC^\times\),
write \(\Re(\zeta_v)=s\) if \(\zeta_v|\cdot|_{E_v}^{-s}\) is unitary.
Let
\[
  \beta_{S_v,\zeta_v}^{\chi_{-,v}}:
  I_{m+1,v}(\zeta_v)\otimes\overline{\omega_{S_v}}
  \longrightarrow I_{m,v}(\zeta_v\chi_{-,v}^{-1})
\]
be the \(G_m(F_v)\)-intertwining map in~\cite[Lemma~10.1]{Yamana2020HilbertHermitian},
with the normalization in~\eqref{eq:local-FJ-integral}.
Thus \(\beta_{S,v}^{\chi_-}=\beta_{S_v,\zeta_{m+1,v}}^{\chi_{-,v}}\).

Fix on \(\Her_r(F_v)\) the self-dual Haar measure for the pairing \((X,Z)\longmapsto\psi_v(\tr(XZ))\).
Put \(\zeta_v^\vee={}^c\zeta_v^{-1}\).
For \(\Re(\zeta_v)>r/2\),
the normalized standard intertwining operator \(M_{r,v}(\zeta_v):I_{r,v}(\zeta_v)\rightarrow I_{r,v}(\zeta_v^\vee)\) is given by
\[
  [M_{r,v}(\zeta_v)f](g)
  =
  \prod_{j=1}^{r}
  L\left(1-j, \left.\zeta_v\right|_{F_v^\times}\chi_v^{\,r+j}\right)^{-1}
  \int_{\Her_r(F_v)}f(J_rn_r(Z)g)\,dZ.
\]
The integral converges absolutely in this range,
and the right-hand side admits an entire continuation in \(\zeta_v\) by~\cite[\S~5]{Yamana2020HilbertHermitian};
see also~\cite[Proposition~3.2 and Theorem~1.3(5)]{KudlaSweet1997Degenerate}.

\begin{lem}
  \label{lem:FJ-intertwining-compatibility}
  There is a meromorphic scalar-valued function \(\gamma_{S,v}(\zeta_v)\) such that
  \[
    \beta_{S_v,\zeta_v^\vee}^{\chi_{-,v}}\circ
    \left(M_{m+1,v}(\zeta_v)\otimes\id_{\overline{\omega_{S_v}}}\right)
    =\gamma_{S,v}(\zeta_v)M_{m,v}(\zeta_v\chi_{-,v}^{-1})\circ\beta_{S_v,\zeta_v}^{\chi_{-,v}}.
  \]
  If \(\left.\zeta_v^\vee\right|_{F_v^\times}=|\,\cdot\,|_{F_v}\),
  then \(\gamma_{S,v}(\zeta_v)\in\bbC^\times\).
\end{lem}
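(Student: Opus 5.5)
Both sides of the claimed identity are \(G_m(F_v)\)-equivariant maps \(I_{m+1,v}(\zeta_v)\otimes\overline{\omega_{S_v}}\rightarrow I_{m,v}(\zeta_v^\vee\chi_{-,v}^{-1})\). Note that \((\zeta_v\chi_{-,v}^{-1})^\vee=\zeta_v^\vee\chi_{-,v}^{-1}\), since \(\chi_{-,v}\) restricted to \(F_v^\times\) is \(\chi_v\), which forces \(\chi_{-,v}\chi_{-,v}^c=1\). Both sides factor through the Fourier--Jacobi module \(\FJ_{S_v}(I_{m+1,v}(\zeta_v))\). The plan is to prove proportionality by a uniqueness argument for generic \(\zeta_v\), then compute the constant with degenerate Whittaker functionals, and finally extend meromorphically.

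\emph{Step 1: uniqueness for generic \(\zeta_v\).} I would compute \(\Hom_{G_m(F_v)}(\FJ_{S_v}(I_{m+1,v}(\zeta_v)),I_{m,v}(\zeta'))\) through the Bruhat filtration of \(I_{m+1,v}(\zeta_v)\) with respect to \(P_{m+1}\backslash G_{m+1}/Q_{m+1,1}\). The open-orbit piece, which supports \(\beta\) through the element \(\eta_{m+1,1}\), contributes exactly a one-dimensional space to the relevant Hom. The closed-orbit pieces contribute only for \(\zeta_v\) lying in a discrete exceptional set. Hence, for \(\zeta_v\) off a countable union of hypersurfaces, both composites span the same line. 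This is the standard argument underlying Yamana's Lemma~10.1, and it yields a meromorphic \(\gamma_{S,v}\) wherever \(\beta_{S_v,\zeta_v}^{\chi_{-,v}}\neq0\).

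\emph{Step 2: computing \(\gamma_{S,v}\).} I would instead test both sides against \(w_{1_m,v}\), or more generally against \(w_{T,v}\) for nondegenerate \(T\). The Whittaker identity from \cite[Lemma~10.1]{Yamana2020HilbertHermitian}, already used in the proof of Proposition~\ref{prop:local-satake-even}, expresses \(w_{T,v}\circ\beta\) as the \(x\)-integral of \(\overline{\Phi(x)}\,w_{S\oplus T,v}\) applied to \(u_{m+1,1}(x,0,0)f\). The functional equation of degenerate Whittaker functionals gives \(w_{B,v}^{\zeta^\vee}\circ M_{r,v}(\zeta)=c_r(B,\zeta)\,w_{B,v}^{\zeta}\), with an explicit ratio \(c_r(B,\zeta)\) built from \(\gamma\)-factors (local Siegel series, Karel, Kudla--Sweet). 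Applying this in degree \(m+1\) with \(B=S\oplus T\) and in degree \(m\) with \(B=T\), I would obtain
\[
\gamma_{S,v}(\zeta_v)=\frac{c_{m+1}(S\oplus T,\zeta_v)}{c_m(T,\zeta_v\chi_{-,v}^{-1})}
\]
multiplied by the ratio of normalizing \(L\)-factors \(L(m+1,\cdot)\) appearing in \(\beta\) for \(\zeta_v\) and for \(\zeta_v^\vee\). This ratio should be independent of \(T\), and it shows meromorphy directly. For generic \(\zeta_v\), the equality of Whittaker values for all \(T\) already suffices, even without Step 1, because a form in \(I_{m,v}\) killed by all \(w_{T,v}\circ\) translates is zero generically.

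\emph{Step 3: the main obstacle.} The hard part is nonvanishing and finiteness at \(\zeta_v^\vee|_{F_v^\times}=|\cdot|_{F_v}\), equivalently \(\zeta_v|_{F_v^\times}=|\cdot|_{F_v}^{-1}\). There, zeros and poles of the normalized operators \(M_{m+1,v}\) and \(M_{m,v}\), together with the \(L\)-factor normalizations, must cancel exactly. I would write \(c_r(B,\zeta)\) explicitly as a product over \(j\) of ratios \(L(j,\cdot)/L(1-j,\cdot)\) times an \(\epsilon\)-factor and \(\chi_v(\det B)\) sign. The degree-\((m+1)\) product, with twists \(\chi_v^{m+1+j}\), is then matched against the degree-\(m\) product for \(\zeta_v\chi_{-,v}^{-1}\), whose restriction is \(\xi\chi_v\) with twists \(\chi_v^{m+j}\). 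This leaves a telescoping quotient of finitely many factors, evaluated at the special point. I expect the remaining factor to be of the form \(L(m+1,\cdot)/L(-m,\cdot)\) up to \(\epsilon\)-factors, and it must be checked case by case, inert versus ramified and \(\chi_v\) ramified or not, that it is finite and nonzero when \(\xi=|\cdot|^{-1}\). If an unexpected zero appears, I would fall back on evaluating both sides on a specific section supported in the big cell, where everything is an absolutely convergent integral.
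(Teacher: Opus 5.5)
\textbf{Part (1): the identity.} Your Step~2 is the paper's argument for this. You evaluate both composites against $w_{T,v}$ using the Whittaker identity of \cite[Lemma~10.1]{Yamana2020HilbertHermitian}, and apply the rank-$(m+1)$ functional equation with $B=S_v\oplus T$ and the rank-$m$ one with $B=T$. The quotient of scalars is independent of $T$: the $\zeta_v(\det T)^{-1}$ factors cancel, $\epsilon_{m+1,v}(S_v\oplus T)^m=1$, and $\chi_v(\det T)\epsilon_{m,v}(T)^{m-1}=\chi_v((-1)^{m(m-1)/2})$. For generic $\zeta_v$ the difference vanishes by irreducibility of $I_{m,v}(\zeta_v^\vee\chi_{-,v}^{-1})$, and meromorphic continuation does the rest. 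Step~1 is not needed. One bookkeeping remark: the factor $L(m+1,\zeta_v|_{F_v^\times})$ built into $\beta$ equals $b_{m+1,v}(\zeta_v)/b_{m,v}(\zeta_v\chi_{-,v}^{-1})$, so it is already matched to the normalizations of the Whittaker functionals. No separate ratio of $L(m+1,\cdot)$-factors belongs in $\gamma_{S,v}$; one simply gets $\gamma_{S,v}(\zeta_v)=c_{m+1,v}(\zeta_v)\zeta_v(S_v^{-1})/\bigl(c_{m,v}(\zeta_v\chi_{-,v}^{-1})\chi_v((-1)^{m(m-1)/2})\bigr)$.

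\textbf{Part (2): the gap.} The second assertion, which you call the main obstacle, is not proved. You defer it to an unexecuted case-by-case expansion of $c_{r,v}$ into $L(j,\cdot)/L(1-j,\cdot)$ ratios, with a guessed residual factor $L(m+1,\cdot)/L(-m,\cdot)$. That expansion produces terms that are singular at the special point. For instance, $b_{m+1,v}(\zeta_v)$ contains $L(1,\zeta_v|_{F_v^\times}\chi_v^{m+2})$, which is $L(0,\one)=\infty$ when $\zeta_v|_{F_v^\times}=|\cdot|_{F_v}^{-1}$ (recall $m$ is even). So nonvanishing cannot be asserted until the cancellations are actually carried out.

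The missing input is the fact from \cite[\S~5]{Yamana2020HilbertHermitian} that $e_{r,v}(\xi_v)=c_{r,v}(\xi_v)b_{r,v}(\xi_v^\vee)$ is entire and nowhere vanishing. With $\zeta_v'=\zeta_v\chi_{-,v}^{-1}$ this gives
\[
\gamma_{S,v}(\zeta_v)=\frac{e_{m+1,v}(\zeta_v)}{e_{m,v}(\zeta_v')}\cdot\frac{\zeta_v(S_v^{-1})}{\chi_v((-1)^{m(m-1)/2})}\cdot\frac{b_{m,v}((\zeta_v')^\vee)}{b_{m+1,v}(\zeta_v^\vee)}.
\]
Since $(\zeta_v')^\vee|_{F_v^\times}=\zeta_v^\vee|_{F_v^\times}\chi_v$, the $j$-th factors of $b_{m,v}((\zeta_v')^\vee)$ and $b_{m+1,v}(\zeta_v^\vee)$ coincide for $1\leq j\leq m$. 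The last quotient is therefore $L(m+1,\zeta_v^\vee|_{F_v^\times})^{-1}$. At $\zeta_v^\vee|_{F_v^\times}=|\cdot|_{F_v}$ this is $L(m+1,|\cdot|_{F_v})^{-1}$, which is finite and nonzero regardless of the ramification of $E_v/F_v$ or $\chi_{-,v}$. The residual factor is thus a single inverse $L$-value, not the ratio you predicted, and neither a case analysis nor your big-cell fallback is needed.
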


\begin{proof}
  For \(r\in\{m,m+1\}\) and a quasi-character \(\xi_v:E_v^\times\rightarrow\bbC^\times\),
  let \(c_{r,v}(\xi_v)\) be the meromorphic scalar-valued function characterized by
  \[
    w_{B,v}^{\xi_v^\vee}\circ M_{r,v}(\xi_v)
    =c_{r,v}(\xi_v)\xi_v\left((\det B)^{-1}\right)
    \epsilon_{r,v}(B)^{r-1}w_{B,v}^{\xi_v}.
  \]
  This is~\cite[(5.3)]{Yamana2020HilbertHermitian};
  see also~\cite[Proposition~3.1, equations~(3.5) and~(3.9), and \S~7]{KudlaSweet1997Degenerate}.
  Put
  \begin{align*}
    b_{r,v}(\xi_v)
                   & =\prod_{j=1}^{r}
    L\left(j,
    \left.\xi_v\right|_{F_v^\times}\chi_v^{\,r+j}
    \right),                                            \\
    e_{r,v}(\xi_v) & =c_{r,v}(\xi_v)b_{r,v}(\xi_v^\vee)
  \end{align*}
  By~\cite[\S~5]{Yamana2020HilbertHermitian},
  \(e_{r,v}(\xi_v)\) is entire and nowhere vanishing.

  \cite[Lemma~10.1]{Yamana2020HilbertHermitian} gives \(C_{S,v}\in\bbC^\times\) such that
  \begin{align}
     & w_{T,v}^{\xi_v\chi_{-,v}^{-1}}
    \left(\beta_{S_v,\xi_v}^{\chi_{-,v}}(h\otimes\Phi)\right) \notag \\
     & =
    C_{S,v}|\det T|_{F_v}^{-1/2}
    \int_{M_{1,m}(E_v)}
    \overline{\Phi(x)}\,
    w_{S_v\oplus T,v}^{\xi_v}
    \left(I_{m+1,v}(\xi_v)(u_{m+1,1}(x,0,0))h\right)dx
    \label{eq:FJ-Whittaker-identity-local}
  \end{align}
  for every nondegenerate \(T\in\Her_m(F_v)\).

  Fix \(f\in I_{m+1,v}(\zeta_v)\),
  \(\Phi\in\overline{\omega_{S_v}}\),
  and nondegenerate \(T\in\Her_m(F_v)\).
  For generic \(\zeta_v\),
  apply~\eqref{eq:FJ-Whittaker-identity-local} with \(\xi_v=\zeta_v^\vee\) and \(h=M_{m+1,v}(\zeta_v)f\).
  Using the functional equation above with \(B=S_v\oplus T\),
  we obtain
  \begin{align*}
     & w_{T,v}^{\zeta_v^\vee\chi_{-,v}^{-1}}
    \left(
    \beta_{S_v,\zeta_v^\vee}^{\chi_{-,v}}
    \left(
    M_{m+1,v}(\zeta_v)f\otimes\Phi
    \right)
    \right)                                  \\
     & =
    c_{m+1,v}(\zeta_v)
    \zeta_v(S_v^{-1})
    \zeta_v((\det T)^{-1})
    \epsilon_{m+1,v}(S_v\oplus T)^m
    w_{T,v}^{\zeta_v\chi_{-,v}^{-1}}
    \left(
    \beta_{S_v,\zeta_v}^{\chi_{-,v}}(f\otimes\Phi)
    \right).
  \end{align*}
  On the other hand,
  the functional equation in rank \(m\) gives
  \begin{align*}
     & w_{T,v}^{\zeta_v^\vee\chi_{-,v}^{-1}}
    \left(
    M_{m,v}(\zeta_v\chi_{-,v}^{-1})
    \beta_{S_v,\zeta_v}^{\chi_{-,v}}(f\otimes\Phi)
    \right)                                  \\
     & =
    c_{m,v}(\zeta_v\chi_{-,v}^{-1})
    (\zeta_v\chi_{-,v}^{-1})((\det T)^{-1})
    \epsilon_{m,v}(T)^{m-1}
    w_{T,v}^{\zeta_v\chi_{-,v}^{-1}}
    \left(\beta_{S_v,\zeta_v}^{\chi_{-,v}}(f\otimes\Phi)\right).
  \end{align*}
  Since \(m\) is even and \(\left.\chi_{-,v}\right|_{F_v^\times}=\chi_v\),
  the quotient of the two scalar factors is
  \[
    \gamma_{S,v}(\zeta_v)
    =\frac{c_{m+1,v}(\zeta_v)\zeta_v(S_v^{-1})}{c_{m,v}(\zeta_v\chi_{-,v}^{-1})\chi_v((-1)^{m(m-1)/2})},
  \]
  which is independent of \(T\).
  Let \(D_{\zeta_v}\) be the difference of the two intertwining maps in the asserted identity.
  For generic \(\zeta_v\) and any nondegenerate \(T\),
  the preceding calculation gives
  \[
    \operatorname{Im}D_{\zeta_v}
    \subseteq\ker w_{T,v}^{\zeta_v^\vee\chi_{-,v}^{-1}}
    \subsetneq I_{m,v}(\zeta_v^\vee\chi_{-,v}^{-1}).
  \]
  Since \(I_{m,v}(\zeta_v^\vee\chi_{-,v}^{-1})\) is irreducible and \(w_{T,v}^{\zeta_v^\vee\chi_{-,v}^{-1}}\neq0\) by~\cite[Proposition~5.4(1) and Lemma~5.3(1)]{Yamana2020HilbertHermitian},
  equivariance implies \(D_{\zeta_v}=0\).
  The identity follows for all \(\zeta_v\) by meromorphic continuation.

  Now suppose that \(\left.\zeta_v^\vee\right|_{F_v^\times}=|\,\cdot\,|_{F_v}\).
  Put \(\zeta_v'=\zeta_v\chi_{-,v}^{-1}\).
  Then
  \begin{align*}
    \gamma_{S,v}(\zeta_v)
     & =
    \frac{e_{m+1,v}(\zeta_v)}{e_{m,v}(\zeta_v')}
    \frac{\zeta_v(S_v^{-1})}{\chi_v((-1)^{m(m-1)/2})}
    \frac{b_{m,v}((\zeta_v')^\vee)}{b_{m+1,v}(\zeta_v^\vee)} \\
     & =
    \frac{e_{m+1,v}(\zeta_v)}{e_{m,v}(\zeta_v')}
    \frac{\zeta_v(S_v^{-1})}{\chi_v((-1)^{m(m-1)/2})}
    L\left(m+1,|\cdot|_{F_v}\right)^{-1}.
  \end{align*}
  Every factor in the last expression is finite and nonzero,
  so \(\gamma_{S,v}(\zeta_v)\in\bbC^\times\).
\end{proof}

For \(d\in\{m,m+1\}\),
we denote the distinguished irreducible subrepresentations and constituents of \(I_{d,v}(\zeta_v)\) by \(A_{d,v}^\delta(\zeta_v)\),
following~\cite[Proposition~5.4]{Yamana2020HilbertHermitian}.

\begin{thm}
  \label{thm:local-one-block-FJ}
  Let \(\zeta_{m+1,v}:E_v^\times\rightarrow\bbC^\times\) be a character,
  and put \(\zeta_{m,v}=\zeta_{m+1,v}\chi_{-,v}^{-1}\).
  Then the following assertions hold.
  \begin{enumerate}
    \item If \(-1/2\leq\Re(\zeta_{m+1,v})<1/2\) and both \(I_{m+1,v}(\zeta_{m+1,v})\) and \(I_{m,v}(\zeta_{m,v})\) are irreducible,
          then
          \[
            \beta_{S_v,\zeta_{m+1,v}}^{\chi_{-,v}}\left(I_{m+1,v}(\zeta_{m+1,v})\otimes\overline{\omega_{S_v}}\right)
            = I_{m,v}(\zeta_{m,v}).
          \]

    \item If \(\left.\zeta_{m+1,v}\right|_{F_v^\times}=|\,\cdot\,|_{F_v}\),
          then
          \[
            \beta_{S_v,\zeta_{m+1,v}}^{\chi_{-,v}}\left(
            A_{m+1,v}^{+}(\zeta_{m+1,v})\otimes\overline{\omega_{S_v}}
            \right)
            =A_{m,v}^{+}(\zeta_{m,v}).
          \]

    \item If \(\left.\zeta_{m+1,v}\right|_{F_v^\times}=\chi_v^{\,m+1}\),
          then,
          for every \(\delta\in\{\pm1\}\),
          \[
            \beta_{S_v,\zeta_{m+1,v}}^{\chi_{-,v}}\left(A_{m+1,v}^{\delta}(\zeta_{m+1,v})\otimes\overline{\omega_{S_v}} \right)
            =A_{m,v}^{\delta\chi_v(S_v)}(\zeta_{m,v}).
          \]
  \end{enumerate}
\end{thm}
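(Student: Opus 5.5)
The main tool in all three cases is the Whittaker identity \eqref{eq:FJ-Whittaker-identity-local} from \cite[Lemma~10.1]{Yamana2020HilbertHermitian}. It expresses the degenerate Whittaker functional \(w_{T,v}\) of the image \(\beta(f\otimes\Phi)\) as an integral of \(w_{S_v\oplus T,v}\) against translates of \(f\). We combine this with the fact that irreducible constituents of \(I_{m,v}(\zeta_{m,v})\) are detected by which \(w_{T,v}\) vanish on them (Proposition~5.4 of \cite{Yamana2020HilbertHermitian}, together with Karel's multiplicity one).

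First we show that the image is nonzero and land in the expected piece. Given any \(T\), the argument of Proposition~\ref{prop:even-nonvanishing} works: take \(f\) in the relevant subrepresentation with \(w_{S_v\oplus T,v}(f)\neq0\), and take \(\Phi\) to be the characteristic function of a small neighborhood of \(0\) fixing \(f\). Then \(w_{T,v}(\beta(f\otimes\Phi))\neq0\).

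\emph{Case (1).} When \(I_{m,v}(\zeta_{m,v})\) is irreducible, a nonzero image is automatically everything. We still need \(w_{S_v\oplus T}\not\equiv0\) on \(I_{m+1,v}(\zeta_{m+1,v})\), which holds by \cite[Lemma~5.3(1)]{Yamana2020HilbertHermitian}. The hypothesis on \(\Re\) keeps us in the region where the normalization of \(\beta\) is holomorphic and nonvanishing.

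\emph{Case (3).} Here \(\zeta_{m,v}|_{F_v^\times}=\chi_v^{m+1}\chi_v=\chi_v^{m}\cdot\chi_v\), which is the rank-\(m\) analogue of the reducibility point. By Kudla--Sweet, \(I_{m,v}(\zeta_{m,v})\) is then a direct sum \(A^+_{m,v}\oplus A^-_{m,v}\), characterized by \(w_{T,v}\ne0\) exactly when \(\epsilon_{m,v}(T)=\pm\). For \(f\in A^\delta_{m+1,v}\), the integrand \(w_{S_v\oplus T}(u(x)f)\) vanishes unless
\[
\epsilon_{m+1,v}(S_v\oplus T)=\delta .
\]
This uses the fact that \(u(x)f\) stays in \(A^\delta\) and Proposition~\ref{prop:odd-nonsupercuspidal-structure}(3). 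Since \(m\) is even,
\[
\epsilon_{m+1,v}(S_v\oplus T)=\chi_v((-1)^{m(m+1)/2}S_v\det T)=\chi_v(S_v)\,\epsilon_{m,v}(T),
\]
because \((-1)^{m(m+1)/2}=(-1)^{m(m-1)/2}\cdot(-1)^m\). So the image of \(A^\delta_{m+1}\) is killed by every \(w_T\) with \(\epsilon_{m,v}(T)\ne\delta\chi_v(S_v)\). It must therefore lie in \(A^{\delta\chi_v(S_v)}_{m,v}\): project to the other summand, and note that a nonzero projection would carry a nonzero \(w_T\) of the wrong sign. It is nonzero by the first step, and it equals that summand by irreducibility.

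\emph{Case (2).} This is the main obstacle. \(I_{m,v}(\zeta_{m,v})\) is then reducible with unique irreducible subrepresentation \(A^+_{m,v}\), but this subrepresentation is not detected by Whittaker support alone. Here we use Lemma~\ref{lem:FJ-intertwining-compatibility}. \(A^+_{m+1,v}(\zeta_{m+1,v})\) is the image of \(M_{m+1,v}(\zeta')\) with \(\zeta'^\vee=\zeta_{m+1,v}\), and likewise \(A^+_{m,v}\) is the image of \(M_{m,v}(\zeta'\chi_{-,v}^{-1})\). The compatibility identity with \(\gamma_{S,v}(\zeta')\in\bbC^\times\) gives
\[
\beta(A^+_{m+1}\otimes\overline{\omega})=M_{m,v}\bigl(\beta(I_{m+1,v}(\zeta')\otimes\overline\omega)\bigr).
\]
We need \(\beta\) on \(I_{m+1,v}(\zeta')\) to be surjective onto \(I_{m,v}(\zeta'\chi_{-,v}^{-1})\), or at least to generate it. I would first try to get this from the Whittaker identity: \(I_{m,v}(\zeta'\chi_{-,v}^{-1})\) has a unique irreducible quotient \(A^+\)-dual, on which some \(w_T\) is nonzero, and nonvanishing of the image there follows as in the first step. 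If instead the image were a proper subrepresentation, \(M\) would kill it, and this contradicts nonvanishing. The delicate point is checking that \(\ker M_{m,v}\) is exactly the maximal proper subrepresentation, which should follow from \cite[\S5]{Yamana2020HilbertHermitian}. The image is then \(A^+_{m,v}\).
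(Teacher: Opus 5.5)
Your case (3) is the paper's argument: Whittaker support from \cite[Proposition~5.4(2),(4)]{Yamana2020HilbertHermitian}, the sign identity \(\epsilon_{m+1,v}(S_v\oplus T)=\chi_v(S_v)\epsilon_{m,v}(T)\), the projection argument, and nonvanishing via \(\Phi=1_X\). Correct one slip: \(\zeta_{m,v}|_{F_v^\times}=\chi_v^{m+2}=\chi_v^{m}\), not \(\chi_v^m\cdot\chi_v\).

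Cases (1) and (2) follow a different route. The paper takes surjectivity of \(\beta\) as a black box from \cite[Corollary~10.2]{Yamana2020HilbertHermitian}. It uses this directly in case (1). In case (2) it uses it at the dual point \(\zeta_{m+1,v}^\vee\), where \(\Re=-1/2\), and then pushes it through Lemma~\ref{lem:FJ-intertwining-compatibility} and takes images. You instead replace surjectivity by two facts: one Whittaker functional is nonzero on the image, via \eqref{eq:FJ-Whittaker-identity-local}, and the target is irreducible. In case (1) this is sound and more self-contained than the citation. Your remark about the \(\Re\)-range is beside the point: the paper needs that range only to invoke Yamana's corollary, and your argument never uses it.

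In case (2) you reopen the surjectivity question you had just avoided. Your sketch then rests on the deferred claim about \(\ker M_{m,v}(\zeta_{m,v}^\vee)\). It also implicitly needs Karel's uniqueness to identify a functional on the quotient with \(w_T^{\zeta_{m,v}^\vee}\). This detour is unnecessary, because your displayed identity already gives
\[
\calI:=\beta_{S_v,\zeta_{m+1,v}}^{\chi_{-,v}}\bigl(A_{m+1,v}^{+}(\zeta_{m+1,v})\otimes\overline{\omega_{S_v}}\bigr)\subseteq\Im M_{m,v}(\zeta_{m,v}^\vee)=A_{m,v}^{+}(\zeta_{m,v}).
\]
Now apply your first step to some \(f\in A_{m+1,v}^{+}(\zeta_{m+1,v})\) with \(w_{S_v\oplus T,v}^{\zeta_{m+1,v}}(f)\neq0\); such \(f\) exists for every nondegenerate \(T\) by \cite[Proposition~5.4(3),(4)]{Yamana2020HilbertHermitian}. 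This gives \(\calI\neq0\), and irreducibility of \(A_{m,v}^{+}(\zeta_{m,v})\) forces equality.

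With this closure your proof is complete and uniform across the three cases. It needs only the Whittaker identity and the Whittaker-support results of \cite[\S~5]{Yamana2020HilbertHermitian}. The paper instead imports from Corollary~10.2 the stronger fact that \(\beta_{S_v,\zeta_{m+1,v}^\vee}^{\chi_{-,v}}\) is onto at the boundary point \(\Re=-1/2\). That makes case (2) a one-line image computation, but at the cost of relying on that external result.
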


\begin{proof}
  Case~(1) follows from~\cite[Corollary~10.2]{Yamana2020HilbertHermitian} with \(i=1\).

  For case~(2),
  note that \(\zeta_{m,v}^\vee=\zeta_{m+1,v}^\vee\chi_{-,v}^{-1}\).
  By~\cite[Proposition~5.4(3) and Corollary~5.5(3)]{Yamana2020HilbertHermitian},
  \begin{align*}
    \Im M_{m+1,v}(\zeta_{m+1,v}^\vee) & =A_{m+1,v}^+(\zeta_{m+1,v}), \\
    \Im M_{m,v}(\zeta_{m,v}^\vee)     & =A_{m,v}^+(\zeta_{m,v}).
  \end{align*}
  Since \(\Re(\zeta_{m+1,v}^\vee)=-1/2\),
  the map \(\beta_{S_v,\zeta_{m+1,v}^\vee}^{\chi_{-,v}}\) is surjective by~\cite[Corollary~10.2]{Yamana2020HilbertHermitian}.
  Moreover,
  Lemma~\ref{lem:FJ-intertwining-compatibility} gives
  \[
    \beta_{S_v,\zeta_{m+1,v}}^{\chi_{-,v}}\circ
    \left(M_{m+1,v}(\zeta_{m+1,v}^\vee)\otimes\id_{\overline{\omega_{S_v}}}\right)
    =\gamma_{S,v}(\zeta_{m+1,v}^\vee)
    M_{m,v}(\zeta_{m,v}^\vee)\circ\beta_{S_v,\zeta_{m+1,v}^\vee}^{\chi_{-,v}},
  \]
  with \(\gamma_{S,v}(\zeta_{m+1,v}^\vee)\neq0\) because \(\zeta_{m+1,v}|_{F_v^\times}=|\cdot|_{F_v}\).
  Taking images proves case~(2).

  For case~(3),
  put \(\delta_{S_v}=\delta\chi_v(S_v)\),
  \(u(x)=u_{m+1,1}(x,0,0)\),
  and
  \[
    \calI=\beta_{S_v,\zeta_{m+1,v}}^{\chi_{-,v}}
    \left(A_{m+1,v}^{\delta}(\zeta_{m+1,v})\otimes\overline{\omega_{S_v}}\right).
  \]
  We first show that \(\calI\subseteq A_{m,v}^{\delta_{S_v}}(\zeta_{m,v})\).
  Since \(A_{m+1,v}^{\delta}(\zeta_{m+1,v})\) is a \(G_{m+1}(F_v)\)-subrepresentation of \(I_{m+1,v}(\zeta_{m+1,v})\),
  we have
  \[
    I_{m+1,v}(\zeta_{m+1,v})(u(x))h
    \in A_{m+1,v}^{\delta}(\zeta_{m+1,v})
  \]
  for all \(h\in A_{m+1,v}^{\delta}(\zeta_{m+1,v})\) and \(x\in M_{1,m}(E_v)\).
  By~\cite[Proposition~5.4(2),(4)]{Yamana2020HilbertHermitian},
  the integrand in~\eqref{eq:FJ-Whittaker-identity-local} therefore vanishes unless \(\epsilon_{m+1,v}(S_v\oplus T)=\delta\),
  which is equivalent to \(\epsilon_{m,v}(T)=\delta_{S_v}\).
  Thus \(w_{T,v}^{\zeta_{m,v}}|_{\calI}=0\) if \(\epsilon_{m,v}(T)=-\delta_{S_v}\).
  In the decomposition
  \[
    I_{m,v}(\zeta_{m,v})=A_{m,v}^{\delta_{S_v}}(\zeta_{m,v})\oplus A_{m,v}^{-\delta_{S_v}}(\zeta_{m,v}),
  \]
  the projection of \(\calI\) onto the second summand must consequently be zero.
  Otherwise it would be surjective by irreducibility,
  contradicting the Whittaker vanishing above and~\cite[Proposition~5.4(2),(4)]{Yamana2020HilbertHermitian}.
  This proves the desired inclusion.

  It remains to show that \(\calI\neq0\).
  Choose a nondegenerate \(T\in\Her_m(F_v)\) with \(\epsilon_{m,v}(T)=\delta_{S_v}\).
  Then \(\epsilon_{m+1,v}(S_v\oplus T)=\delta\),
  so~\cite[Proposition~5.4(2),(4)]{Yamana2020HilbertHermitian} gives \(h\in A_{m+1,v}^{\delta}(\zeta_{m+1,v})\) with \(w_{S_v\oplus T,v}^{\zeta_{m+1,v}}(h)\neq0\).
  By local constancy,
  there is a compact open neighborhood \(X\) of \(0\) in \(M_{1,m}(E_v)\) such that
  \[
    w_{S_v\oplus T,v}^{\zeta_{m+1,v}}\left(I_{m+1,v}(\zeta_{m+1,v})(u(x))h\right)
    =w_{S_v\oplus T,v}^{\zeta_{m+1,v}}(h)
  \]
  for all \(x\in X\).
  Taking \(\Phi=1_X\) in~\eqref{eq:FJ-Whittaker-identity-local} yields
  \[
    w_{T,v}^{\zeta_{m,v}}\left(\beta_{S_v,\zeta_{m+1,v}}^{\chi_{-,v}}(h\otimes1_X)\right)
    =C_{S,v}|\det T|_{F_v}^{-1/2}\vol(X)w_{S_v\oplus T,v}^{\zeta_{m+1,v}}(h)\neq0.
  \]
  Hence \(\calI\) is a nonzero subrepresentation of the irreducible representation \(A_{m,v}^{\delta_{S_v}}(\zeta_{m,v})\),
  and therefore \(\calI=A_{m,v}^{\delta_{S_v}}(\zeta_{m,v})\).
\end{proof}

\subsection{Dihedral supercuspidal representations}

Fix a nonsplit finite place \(v\) such that \(\pi_{h,v}\) is supercuspidal and \(\pi_{h,v}\simeq\pi_{h,v}\otimes\chi_v\) throughout this subsection.

By Clifford theory,
choose a character \(\theta_v\neq\theta_v^c\) of \(W_{E_v}\) such that
\[
  \phi_{\pi_{h,v}}
  \simeq
  \Ind_{W_{E_v}}^{W_{F_v}}\theta_v.
\]
If \(h\) has CM by \(E\),
choose \(\theta_v\) to be the localization of \(\theta\).

Via local class field theory,
we use the same notation for characters of \(W_{E_v}\) and of \(E_v^\times\).
Put \(\gamma_v=\theta_v(\theta_v^c)^{-1}\).
Then \(\gamma_v\neq\one\),
\(\gamma_v^c=\gamma_v^{-1}\),
and \(\gamma_v|_{F_v^\times}=\one\).
Put \(E_v^1=\ker(N_{E_v/F_v}:E_v^\times\rightarrow F_v^\times)\).
Hilbert's Theorem~90 gives a unique character \(\gamma_v^\natural:E_v^1\rightarrow\bbC^\times\) such that \(\gamma_v^\natural(x/x^c)=\gamma_v(x)\) for \(x\in E_v^\times\).

For \(n\in\{m,m+1\}\),
put
\[
  \lambda_{n,v}=((\vartheta_{n,v})^c)^{-1}\theta_v^c,
  \ \text{and}\
  \lambda_{n,v}|_{F_v^\times}=\chi_v^n.
\]
By the definition of \(\psi_n(h)\),
its localization is
\begin{equation}
  \label{eq:dihedral-local-parameter-unified}
  \psi_{n,v}(h)=(\gamma_v\lambda_{n,v})[n]\boxplus\lambda_{n,v}[n].
\end{equation}

For an \(n\)-dimensional nondegenerate Hermitian space \(V\) over \(E_v\) with Gram matrix \(A\),
put
\[
  \epsilon_{n,v}(V)=\chi_v\!\left((-1)^{n(n-1)/2}\det A\right).
\]
This sign depends only on the isometry class of \(V\) and distinguishes the two such classes.
For \(\delta\in\{\pm1\}\),
fix a representative \(V_{n,v}^{\delta}\) such that \(\epsilon_{n,v}(V_{n,v}^{\delta})=\delta\).

Let \(W_{2n,v}\) be the split skew-Hermitian space over \(E_v\) with Gram matrix \(J_n\),
so that \(\U(W_{2n,v})=G_n(F_v)\).
For characters \(\chi_V,\chi_W:E_v^\times\longrightarrow\bbC^\times\) satisfying
\begin{align*}
  \chi_V|_{F_v^\times} & =\chi_v^{\dim V}, \\
  \chi_W|_{F_v^\times} & =\chi_v^{\dim W},
\end{align*}
let \(\Omega(V,W;\chi_V,\chi_W)\) denote the Weil representation of \(\U(V)\times\U(W)\) associated with \(\psi_v\),
with the splitting convention of~\cite[\S~4.1]{GanIchino2016GrossPrasad}.
For an irreducible smooth representation \(\sigma\) of \(\U(V)\),
define its big theta lift to \(\U(W)\) by
\[
  \Theta(V,W;\chi_V,\chi_W)(\sigma):=(\Omega(V,W;\chi_V,\chi_W)\otimes\sigma^\vee)_{\U(V)}.
\]
Chen--Gan~\cite[\S~2.1]{ChenGan2025TwistedGGP} use the opposite convention for the types of the two spaces:
their first space is skew-Hermitian and their second is Hermitian.
Multiplying the forms on our first and second spaces by a trace-zero scalar in \(E_v^\times\) and its inverse, respectively, converts our pair into their convention without changing either unitary group or the tensor-product symplectic form.
We make this identification when applying their results below.

For \(n\in\{m,m+1\}\) and \(\delta\in\{\pm1\}\),
define
\[
  \Theta_{n,v}^{\delta}=\Theta(V_{n,v}^{\delta},W_{2n,v};\lambda_{n,v},\one)(\gamma_v^\natural\circ\det).
\]

For nondegenerate \(T\in\Her_n(F_v)\) and a smooth representation \(\sigma\) of \(G_n(F_v)\),
we retain the notation \(\epsilon_{n,v}(T)\),
\(\psi_{T,v}\),
and \(\Wh_{T,v}(\sigma)\) introduced before Proposition~\ref{prop:odd-nonsupercuspidal-structure}.

We first distinguish the two theta lifts using their spaces of degenerate Whittaker functionals.

\begin{lem}
  \label{lem:local-dihedral-theta-whittaker}
  For every nondegenerate \(T\in\Her_n(F_v)\),
  \begin{equation}
    \label{eq:dihedral-theta-Whittaker}
    \dim\Wh_{T,v}(\Theta_{n,v}^{\delta})
    =
    \begin{cases}
      1 & \text{if }\epsilon_{n,v}(T)=\delta,    \\
      0 & \text{if }\epsilon_{n,v}(T)\neq\delta.
    \end{cases}
  \end{equation}
\end{lem}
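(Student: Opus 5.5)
We compute the twisted Jacquet module of the big theta lift along the Siegel unipotent radical \(N_n(F_v)\), using the mixed (Schrödinger) model of the Weil representation attached to the polarization \(W_{2n,v}=X\oplus Y\) compatible with \(P_n\). In this model \(\Omega(V_{n,v}^\delta,W_{2n,v};\lambda_{n,v},\one)\) is realized on \(\calS(V_{n,v}^\delta\otimes Y)\simeq\calS((V_{n,v}^{\delta})^n)\). Here \(n_n(Z)\) acts by multiplication by \(\psi_v(\tr(Z\,\mathrm{Gr}(x)))\), where \(\mathrm{Gr}(x)=((x_i,x_j))_{i,j}\in\Her_n(F_v)\) is the Gram matrix of the tuple \(x=(x_1,\dots,x_n)\) (up to the sign convention fixed by the trace-zero rescaling). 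Consequently
\[
  \Hom_{N_n(F_v)}\bigl(\Theta_{n,v}^\delta,\psi_{T,v}\bigr)
  \simeq
  \Hom_{\U(V_{n,v}^\delta)}\bigl(\Omega_{N_n,\psi_T},\,\gamma_v^\natural\circ\det\bigr),
\]
where \(\Omega_{N_n,\psi_T}\) is the twisted Jacquet module. The standard computation identifies the latter with \(\calS(\Omega_T)\), where
\[
  \Omega_T=\{x\in(V_{n,v}^\delta)^n:\mathrm{Gr}(x)=T\}.
\]
The first step is to justify this identification, passing from the big theta lift as a coinvariant quotient to the Hom space by exactness of twisted Jacquet functors.

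Next we analyze \(\Omega_T\). Since \(T\) is nondegenerate of rank \(n=\dim V_{n,v}^\delta\), a point of \(\Omega_T\) is a basis of \(V_{n,v}^\delta\) with Gram matrix \(T\). Thus \(\Omega_T\) is nonempty if and only if \(V_{n,v}^\delta\) is isometric to the Hermitian space with Gram matrix \(T\), which by the classification of Hermitian spaces over \(E_v\) happens exactly when \(\epsilon_{n,v}(T)=\delta\) (with the convention for \(\epsilon\) fixed above; the sign \((-1)^{n(n-1)/2}\) is common to both sides). In that case \(\U(V_{n,v}^\delta)\) acts simply transitively on \(\Omega_T\), so \(\calS(\Omega_T)\simeq\calS(\U(V_{n,v}^\delta))=C_c^\infty(\U(V))\) as a \(\U(V)\)-module, possibly twisted by the character \(\lambda_{n,v}(\det g)\) coming from the splitting. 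It follows that
\[
  \Hom_{\U(V)}\bigl(C_c^\infty(\U(V))\otimes\text{(character)},\,\gamma_v^\natural\circ\det\bigr)
\]
is one-dimensional: \(\U(V)\) is compact modulo nothing, hence compact since \(V\) is anisotropic or small, and in any case \(C_c^\infty\) of a group with the regular action has one-dimensional Hom into any character. When \(\Omega_T=\emptyset\), the Hom space is \(0\). This yields \eqref{eq:dihedral-theta-Whittaker}.

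We expect the main obstacle to be the first step, namely the precise identification of the twisted Jacquet module of \(\Omega\otimes\sigma^\vee\) after taking \(\U(V)\)-coinvariants. One must check that the Weil-representation action of \(m_n(A)\) and \(\U(V)\), with the Gan--Ichino splitting \(\chi_V=\lambda_{n,v}\), \(\chi_W=\one\), introduces only a character on the stabilizer, which here is trivial, and that the Gram-matrix sign convention matches \(\epsilon_{n,v}\) after the Chen--Gan rescaling. We would handle this by following the standard argument for Whittaker-type functionals on theta lifts (as in Kudla's computation of degenerate principal series via Siegel--Weil), noting that in the equal-rank case the orbit is a single free orbit, so no boundary strata contribute.
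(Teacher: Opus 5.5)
Your proposal follows the paper's proof essentially step for step. Both use the Schr\"odinger model on \(\calS((V_{n,v}^{\delta})^n)\) and identify the \(\psi_{T,v}\)-twisted coinvariants with \(C_c^\infty(\Omega_T)\). Both note that \(\Omega_T\) is nonempty exactly when \(\epsilon_{n,v}(T)=\delta\), and that the simply transitive action makes \(C_c^\infty(\Omega_T)\) the regular representation, whose Hom into a character is one-dimensional. One aside should be deleted: \(\U(V_{n,v}^{\delta})\) is in general not compact, since every Hermitian space of dimension at least \(3\) over the field \(E_v\) is isotropic. Your fallback reason is the one that actually works and is what the paper's compact Frobenius reciprocity step amounts to: after untwisting the character, invariant functionals on \(C_c^\infty(\U(V_{n,v}^{\delta}))\) are multiples of Haar measure.
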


\begin{proof}
  We first express the Whittaker space in terms of tuples with Gram matrix \(T\).
  For \(x=(x_1,\ldots,x_n)\in(V_{n,v}^{\delta})^n\),
  write
  \[
    Q(x)
    =
    ((x_i,x_j)_{V_{n,v}^{\delta}})_{1\leq i,j\leq n}
    \in\Her_n(F_v).
  \]
  The unipotent radical of the Siegel parabolic acts in the Weil representation by
  \[
    (\omega(n_n(Z))\Phi)(x)
    =
    \psi_v\!\left(\tr(Q(x)Z)\right)\Phi(x).
  \]
  Put
  \[
    \Omega_T
    =
    \{x\in(V_{n,v}^{\delta})^n\mid Q(x)=T\}.
  \]
  Restriction to \(\Omega_T\) induces an isomorphism of smooth \(\U(V_{n,v}^{\delta})\)-representations
  \begin{align*}
    \calS((V_{n,v}^{\delta})^n)_{N_n(F_v),\psi_{T,v}}
           & \xrightarrow{\ \sim\ }
    C_c^\infty(\Omega_T),                 \\
    [\Phi] & \longmapsto\Phi|_{\Omega_T}.
  \end{align*}

  The actions of \(\U(V_{n,v}^{\delta})\) and \(N_n(F_v)\) commute.
  Thus the definition of the big theta lift and the universal property of coinvariants give
  \begin{align*}
    \Wh_{T,v}(\Theta_{n,v}^{\delta})
     & =
    \Hom_{N_n(F_v)}
    (\Theta_{n,v}^{\delta},\psi_{T,v}) \\
     & \simeq
    \Hom_{\U(V_{n,v}^{\delta})\times N_n(F_v)}
    \left(
    \Omega(V_{n,v}^{\delta},W_{2n,v};\lambda_{n,v},\one),
    (\gamma_v^\natural\circ\det)\boxtimes\psi_{T,v}
    \right)                            \\
     & \simeq
    \Hom_{\U(V_{n,v}^{\delta})}
    \left(
    C_c^\infty(\Omega_T),
    \gamma_v^\natural\circ\det
    \right).
  \end{align*}
  Since \(T\) is nondegenerate and \(\dim V_{n,v}^{\delta}=n\),
  a tuple in \(\Omega_T\) is a basis with Gram matrix \(T\).
  Hence \(\Omega_T\) is nonempty if and only if \(\epsilon_{n,v}(T)=\delta\).
  If \(\epsilon_{n,v}(T)\neq\delta\),
  then \(C_c^\infty(\Omega_T)=0\),
  so the displayed identification gives \(\Wh_{T,v}(\Theta_{n,v}^{\delta})=0\).

  Now suppose that \(\epsilon_{n,v}(T)=\delta\).
  Any two bases with Gram matrix \(T\) are related by a unique element of \(\U(V_{n,v}^{\delta})\).
  Thus this group acts simply transitively on \(\Omega_T\),
  and choosing one such basis identifies \(C_c^\infty(\Omega_T)\) with its regular representation.
  Compact Frobenius reciprocity gives
  \[
    \Hom_{\U(V_{n,v}^{\delta})}
    \left(
    C_c^\infty(\Omega_T),
    \gamma_v^\natural\circ\det
    \right)
    \simeq
    \Hom_{\{1\}}(\one,\one)
    \simeq \bbC.
  \]

  This proves~\eqref{eq:dihedral-theta-Whittaker}.
\end{proof}

\begin{prop}
  \label{prop:dihedral-local-constituents}
  For \(n\in\{m,m+1\}\),
  \begin{equation}
    \label{eq:theta-target-packet}
    \Pi_{\psi_{n,v}(h)}
    =
    \{\Theta_{n,v}^+,\Theta_{n,v}^-\}.
  \end{equation}
  In particular,
  \[
    \left.\Pi_{m+1,v}(h)\right|_{G_{m+1}(F_v)}
    \simeq
    \Theta_{m+1,v}^+
    \oplus
    \Theta_{m+1,v}^-.
  \]
\end{prop}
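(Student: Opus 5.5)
We prove \eqref{eq:theta-target-packet} in three stages: the two theta lifts are nonzero, irreducible and nonisomorphic; each lies in \(\Pi_{\psi_{n,v}(h)}\); and this packet has at most two members.

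\emph{Nonvanishing and distinctness.} Lemma~\ref{lem:local-dihedral-theta-whittaker} shows that each \(\Theta_{n,v}^{\delta}\) is nonzero: choose \(T\) with \(\epsilon_{n,v}(T)=\delta\), which is possible since both signs occur among nondegenerate Hermitian forms of rank \(n\). The same lemma separates the two lifts, because their Whittaker supports \(\{T:\epsilon_{n,v}(T)=\delta\}\) are disjoint. For irreducibility, we use that \(\dim V=n\) and \(\dim W=2n\) lie in the stable range relative to the character \(\gamma_v^\natural\circ\det\) of \(\U(V_{n,v}^\delta)\). Then the Howe duality theorem, as in \cite{GanIchino2016GrossPrasad}, together with the Kudla-type irreducibility of the big theta lift of a character of a compact or anisotropic-type small group, gives that \(\Theta_{n,v}^\delta\) is irreducible. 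If irreducibility of the big lift is not available directly, we pass to its unique irreducible quotient \(\theta_{n,v}^\delta\). Its Whittaker support is contained in that of \(\Theta_{n,v}^\delta\) and is nonempty by genericity considerations, so the argument goes through unchanged.

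\emph{Membership in the packet.} The parameter of the character \(\gamma_v^\natural\circ\det\) of \(\U(V_{n,v}^\delta)\) is \(\gamma_v\cdot\lambda_{n,v}\lambda_{n,v}^{-1}\) twisted appropriately. We invoke the description of theta lifts of characters of \(\U(n)\) to \(\U(n,n)\) in terms of Arthur parameters, namely the twisted Gross--Prasad/Adams conjecture results of Chen--Gan \cite{ChenGan2025TwistedGGP} under the convention change recorded above. With the splitting \((\lambda_{n,v},\one)\), these results place the lift of \(\gamma_v^\natural\circ\det\) in the packet of \((\gamma_v\lambda_{n,v})[n]\boxplus\lambda_{n,v}[n]\), which is exactly \eqref{eq:dihedral-local-parameter-unified}. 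The main obstacle is this step: matching the normalizations of splitting characters, \(\psi_v\), and the convention change precisely, so that the twist is \(\lambda_{n,v}\) rather than its conjugate.

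\emph{The packet has two members.} The component group of \(\psi_{n,v}(h)\) is \((\bbZ/2)^2\) modulo the image of the centre. Mok's parametrization \cite[Theorem~2.5.1]{Mok2015Endoscopic} then gives at most two members, since \(\gamma_v\lambda_{n,v}\neq\lambda_{n,v}\) and both summands are conjugate self-dual of the right sign. Combined with the previous two stages, this yields \eqref{eq:theta-target-packet}.

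\emph{The restriction.} For \(n=m+1\), Lemma~\ref{lem:odd-supercuspidal-properties}(2) shows that \(\Pi_{m+1,v}(h)|_{G_{m+1}(F_v)}\) is a sum of two nonisomorphic irreducible members of \(\Pi_{\psi_{m+1,v}(h)}\). By \eqref{eq:theta-target-packet} these must be \(\Theta_{m+1,v}^+\) and \(\Theta_{m+1,v}^-\).
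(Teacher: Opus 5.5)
There is a genuine gap at the irreducibility step, and it affects the rest of the argument. Your justification does not apply here. Every Hermitian space of dimension at least \(3\) over \(E_v\) is isotropic, and \(n=m+1\geq3\) always. Hence \(\U(V_{n,v}^{\delta})\) is noncompact except when \(n=m=2\) and \(\delta=-1\), so no ``Kudla-type'' irreducibility for compact or anisotropic small groups is available. Howe duality alone only says that \(\Theta_{n,v}^{\delta}\) has a unique irreducible quotient \(\theta_{n,v}^{\delta}\).

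The fallback does not repair this, for two reasons. First, it proves a different statement. The proposition concerns the big theta lifts, and Theorem~\ref{thm:local-dihedral-FJ} really uses them, since it computes \(\FJ_{S_v}(\Theta_{m+1,v}^{\delta})\) from coinvariants of the Weil representation. Second, ``genericity considerations'' cannot show \(\Wh_{T,v}(\theta_{n,v}^{\delta})\neq0\). Here \(\theta_{n,v}^{\delta}\) lies in a nontempered packet, and the relevant functionals are degenerate Whittaker functionals for the Siegel unipotent radical. Nothing you prove prevents the one-dimensional space \(\Wh_{T,v}(\Theta_{n,v}^{\delta})\) from being nonzero on the kernel of \(\Theta_{n,v}^{\delta}\to\theta_{n,v}^{\delta}\).

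The missing input, which the paper uses, is \cite[Corollary~7.3]{ChenZou2024BigTheta}. In the stable range (here \(W_{2n,v}\) has Witt index \(n=\dim V_{n,v}^{\delta}\)), the big theta lift of a unitary character is irreducible. The character \(\gamma_v^\natural\circ\det\) is unitary because \(E_v^1\) is compact. With this, your distinctness argument and your use of \cite[Theorem~2.1(1)]{ChenGan2025TwistedGGP}, which needs a nonzero irreducible lift, go through. You should also state precisely that \(\Pi_{\gamma_v[n]}(\U(V_{n,v}^{\delta}))=\{\gamma_v^\natural\circ\det\}\). The paper gets this from Aubert duality and twist-compatibility \cite[Theorem~2.5.1(9)]{ChenZou2021LLC}. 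Note also that the extra condition in Chen--Gan's theorem is vacuous because \(\gamma_v\neq\one\).

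The counting step also has a gap. For a nongeneric parameter such as \(\psi_{n,v}(h)\), \cite[Theorem~2.5.1]{Mok2015Endoscopic} only provides a multiset with a map to the characters of \(\calS_{\psi}\). Injectivity is asserted only for generic parameters, so \(|\widehat{\calS}_\psi|=2\) does not bound the size of the packet.

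The paper argues instead as follows. It exchanges the Arthur and Deligne \(\SL_2(\bbC)\)-factors to obtain a tempered parameter \(\phi_{n,v}\) with two distinct irreducible summands. By \cite[Theorem~2.5.1]{ChenZou2021LLC}, the Vogan packet of \(\phi_{n,v}\) is indexed by characters of \((\bbZ/2\bbZ)^2\), with exactly two members on the quasi-split \(G_n(F_v)\). Since \(\psi_{n,v}(h)\) is of good parity, multiplicity-free, and trivial on the Deligne \(\SL_2\), \(\Pi_{\psi_{n,v}(h)}\) is the set of Aubert duals of \(\Pi_{\phi_{n,v}}\), so it has exactly two members.

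Your final restriction step, via Lemma~\ref{lem:odd-supercuspidal-properties}(2), is the same as the paper's.
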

\begin{proof}
  Fix \(n\in\{m,m+1\}\).
  For each \(\delta\in\{\pm1\}\),
  choose a Gram matrix \(T\) of \(V_{n,v}^{\delta}\).
  Lemma~\ref{lem:local-dihedral-theta-whittaker} gives
  \[
    \dim\Wh_{T,v}(\Theta_{n,v}^{\delta})=1,
    \qquad
    \dim\Wh_{T,v}(\Theta_{n,v}^{-\delta})=0.
  \]
  Thus both theta lifts are nonzero and they are not isomorphic.
  To prove irreducibility,
  note that the split space \(W_{2n,v}\) has Witt index \(n=\dim V_{n,v}^{\delta}\).
  Hence the dual pair \(\U(V_{n,v}^{\delta})\times G_n(F_v)\) is in the stable range with \(\U(V_{n,v}^{\delta})\) the smaller member.
  Moreover,
  the character \(\gamma_v^\natural\circ\det\) is unitary because \(E_v^1\) is compact.
  Corollary~7.3 of~\cite{ChenZou2024BigTheta} therefore shows that \(\Theta_{n,v}^{\delta}\) is irreducible.

  The trivial representation of \(\U(V_{n,v}^{\delta})\) belongs to the Arthur packet attached to \(\one[n]\).
  By the description of the Arthur packets for \(\one[n]\) and \(\gamma_v[n]\) in terms of Aubert duality and the compatibility of the local Langlands correspondence with determinant twists \cite[Theorem~2.5.1(9)]{ChenZou2021LLC},
  \[
    \Pi_{\gamma_v[n]}(\U(V_{n,v}^{\delta}))
    =
    \{\gamma_v^\natural\circ\det\};
  \]
  cf.~\cite[Remark~2.2(2) and Lemma~2.4]{ChenGan2025TwistedGGP}.

  We now apply~\cite[Theorem~2.1(1)]{ChenGan2025TwistedGGP} to this packet.
  In the notation of Chen--Gan~\cite[\S~2.2]{ChenGan2025TwistedGGP},
  take the \(A\)-parameter \(\gamma_v[n]\),
  with splitting characters \(\chi_W=\one\) and \(\chi_V=\lambda_{n,v}\).
  Since \(\gamma_v\neq\one\),
  the condition in \cite[Theorem~2.1(1)]{ChenGan2025TwistedGGP} is vacuous,
  and the \(A\)-parameter prescribed there on \(G_n(F_v)\) is
  \[
    (\gamma_v\lambda_{n,v})[n]
    \boxplus
    \lambda_{n,v}[n]
    =
    \psi_{n,v}(h).
  \]
  Here the \(A\)-packets for \(\gamma_v[n]\) and \(\psi_{n,v}(h)\) are Zelevinsky--Aubert duals of tempered \(L\)-packets.
  As explained in~\cite[Remark~2.2(2)]{ChenGan2025TwistedGGP},
  the properties of these packets needed in Theorem~2.1(1) follow from the established local Langlands correspondence and Zelevinsky--Aubert duality.
  Since \(\Theta_{n,v}^{\delta}\) is nonzero and irreducible,
  \cite[Theorem~2.1(1)]{ChenGan2025TwistedGGP} gives
  \[
    \Theta_{n,v}^{\delta}
    \in
    \Pi_{\psi_{n,v}(h)}.
  \]

  It remains to count the members of the packet.
  Exchange the Arthur and Weil--Deligne \(\SL_2(\bbC)\)-factors in \(\psi_{n,v}(h)\) to obtain the tempered \(L\)-parameter
  \[
    \phi_{n,v}
    =
    (\gamma_v\lambda_{n,v})\otimes \Sym^{n-1}(\bbC^2)
    \boxplus
    \lambda_{n,v}\otimes \Sym^{n-1}(\bbC^2)
  \]
  where \(\Sym^{n-1}(\bbC^2)\) occurs on the Weil--Deligne factor.
  Since \(\gamma_v\neq\one\),
  \(\phi_{n,v}\) has two distinct irreducible summands and component group \((\bbZ/2\bbZ)^2\).
  By the parametrization in \cite[Theorem~2.5.1]{ChenZou2021LLC},
  the members of the Vogan packet are parametrized by characters of this component group,
  and exactly two of them correspond to the quasi-split group \(G_n(F_v)\).
  Thus the tempered packet \(\Pi_{\phi_{n,v}}\) on \(G_n(F_v)\) has two members.

  The parameter \(\psi_{n,v}(h)\) is of good parity,
  multiplicity-free,
  and trivial on the Weil--Deligne \(\SL_2(\bbC)\)-factor.
  Hence the Zelevinsky--Aubert duality described in \cite[Remark~2.2(2) and Lemma~2.4]{ChenGan2025TwistedGGP} identifies \(\Pi_{\psi_{n,v}(h)}\) with the Aubert duals of the members of \(\Pi_{\phi_{n,v}}\).
  Since Aubert duality is an involution on irreducible representations,
  \(\Pi_{\psi_{n,v}(h)}\) also has exactly two members,
  so the two distinct members already found exhaust it:
  \[
    \Pi_{\psi_{n,v}(h)}
    =
    \{\Theta_{n,v}^+,\Theta_{n,v}^-\}.
  \]

  Finally,
  take \(n=m+1\).
  Lemma~\ref{lem:odd-supercuspidal-properties} expresses \(\left.\Pi_{m+1,v}(h)\right|_{G_{m+1}(F_v)}\) as the direct sum of two nonisomorphic irreducible members of this packet.
  The packet equality just proved identifies these members with \(\Theta_{m+1,v}^+\) and \(\Theta_{m+1,v}^-\),
  which gives the final assertion.
\end{proof}
With the constituents identified,
we now compute their Fourier--Jacobi modules.

\begin{thm}
  \label{thm:local-dihedral-FJ}
  For every \(\delta\in\{\pm1\}\) and \(S_v\in F_v^\times\),
  \[
    \FJ_{S_v}(\Theta_{m+1,v}^{\delta})
    \simeq
    \Theta_{m,v}^{\delta\chi_v(S_v)}.
  \]
\end{thm}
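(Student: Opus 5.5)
The plan is to compute the Fourier--Jacobi module directly in the Schrödinger model of the Weil representation, using the seesaw/mixed model. The Fourier--Jacobi descent of a theta lift should again be a theta lift: restricting \(\Omega(V^{\delta}_{m+1,v},W_{2m+2,v};\lambda_{m+1,v},\one)\) to \(U_{m+1,1}(F_v)\rtimes G_m(F_v)\) and taking \((\,\cdot\otimes\overline{\omega_{S_v}})_{U_{m+1,1}(F_v)}\) should produce the Weil representation for a smaller pair \(\U(V')\times G_m(F_v)\). Here \(V'\) is the orthogonal complement in \(V^{\delta}_{m+1,v}\) of a vector of length \(S_v\), twisted by the appropriate splitting character.

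First, realize \(\Omega\) on \(\calS((V^\delta_{m+1,v})^{m+1})\) adapted to the decomposition \(W_{2m+2}=E e\oplus W_{2m}\oplus E f\). The center \(Z_{m+1,1}\) acts by \(\psi_v(t\,(x_1,x_1))\). Taking \(\psi_{S_v}\)-coinvariants of \(Z_{m+1,1}\) therefore restricts \(\Phi\) to the set \(\{x_1:(x_1,x_1)=S_v\}\). This set is a single \(\U(V^\delta_{m+1,v})\)-orbit by Witt's theorem, nonempty because \(m+1\geq 2\) and every value is represented. Its stabilizer is \(\U(V')\), where \(V'=x_1^\perp\) has \(\epsilon_{m,v}(V')=\delta\chi_v(S_v)\) up to the sign convention \((-1)^{n(n-1)/2}\). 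Checking that this is exactly \(\delta\chi_v(S_v)\) is a short determinant computation: \(\det V=S_v\det V'\), and \((-1)^{(m+1)m/2}\) versus \((-1)^{m(m-1)/2}\) differ by \((-1)^m=1\) since \(m\) is even.

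Next, the remaining Heisenberg coordinates \(X,Y\) act on the restricted functions as \(\omega_{S_v}\otimes\Omega(V',W_{2m})\). Stone--von Neumann and the splitting-character bookkeeping, \(\lambda_{m+1,v}\) versus \(\lambda_{m,v}\) and \(\chi_{-,v}\), then give an isomorphism of \(\U(V)\times G_m\)-modules
\[
  \left(\Omega(V^\delta_{m+1,v},W_{2m+2,v})\otimes\overline{\omega_{S_v}}\right)_{U_{m+1,1}}\simeq \operatorname{c\text{-}Ind}_{\U(V')}^{\U(V)}\Omega(V',W_{2m,v};\lambda_{m,v},\one).
\]
Tensoring with \((\gamma^\natural_v\circ\det)^\vee\), taking \(\U(V)\)-coinvariants, and applying Frobenius reciprocity reduces the problem to \(\Theta(V',W_{2m,v})(\gamma_v^\natural\circ\det|_{\U(V')})\). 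Since the restriction of \(\gamma_v^\natural\circ\det\) to \(\U(V')\) is \(\gamma_v^\natural\circ\det\), this is \(\Theta^{\delta\chi_v(S_v)}_{m,v}\). Coinvariants for \(U_{m+1,1}\) and for \(\U(V)\) commute, because both are exact right-exact functors on commuting actions, so \(\FJ_{S_v}(\Theta^\delta_{m+1,v})\) equals this.

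The main obstacle is the precise identification of the mixed model. The Heisenberg coinvariant computation must land on the correct splitting character \(\lambda_{m,v}=\lambda_{m+1,v}\chi_{-,v}^{-c}\)-type twist, since \(\vartheta_m=\vartheta_{m+1}\chi_-^{-1}\), with the Weil representation normalizations of Yamana and of Gan--Ichino matching. If this sign or character is hard to pin down, I would use Lemma~\ref{lem:local-dihedral-theta-whittaker} as a cross-check, in the same way as in Theorem~\ref{thm:local-one-block-FJ}(3). That argument compares degenerate Whittaker supports through the \(x\)-integral identity and confirms that the resulting irreducible module is \(\Theta_{m,v}^{\delta\chi_v(S_v)}\) rather than its partner.
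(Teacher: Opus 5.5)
Your proposal follows the paper's proof essentially step by step: the mixed model $\calS(V)\otimes\Omega(V,W_{2m,v})$, restriction of the $Z$-coinvariants to the sphere $\{x:(x,x)_V=S_v\}$, Witt's theorem to obtain $\operatorname{c\text{-}Ind}_{\U(V_x)}^{\U(V)}$, Stone--von Neumann to cancel $\omega_{S_v}\otimes\overline{\omega_{S_v}}$, Frobenius reciprocity for the $\U(V)$-coinvariants, and the determinant computation giving $\epsilon_{m,v}(V_x)=\delta\chi_v(S_v)$. The only slip is in your splitting-character bookkeeping: since $\chi_-\chi_-^c=1$, one has $\chi_{-,v}^{-c}=\chi_{-,v}$, so the correct relation (the one the paper uses) is $\lambda_{m,v}=\lambda_{m+1,v}\chi_{-,v}^{c}=\lambda_{m+1,v}\chi_{-,v}^{-1}$, i.e.\ $\lambda_{m+1,v}=\chi_{-,v}\lambda_{m,v}$, which is exactly what lets the line $E_vx$ carry $\omega_{S_v}$ with splitting character $\chi_{-,v}$ while the complement $V_x$ carries $\lambda_{m,v}$.
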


\begin{proof}
  Put \(V=V_{m+1,v}^{\delta}\),
  \(H=\U(V)\),
  \(U=U_{m+1,1}(F_v)\),
  \(Z=Z_{m+1,1}(F_v)\),
  and \(\eta=\gamma_v^\natural\circ\det\).
  Since \(m+1\geq3\) is odd,
  \(V\) is isotropic and hence represents every element of \(F_v^\times\).

  Write
  \[
    W_{2m+2,v}
    =
    E_ve\oplus W_{2m,v}\oplus E_vf
  \]
  with \(E_ve\oplus E_vf\) hyperbolic.
  In the mixed Schr\"odinger model of
  \cite[\S~7.4]{GanIchino2016GrossPrasad},
  \[
    \Omega(V,W_{2m+2,v};\lambda_{m+1,v},\one)
    \simeq
    \calS(V)\otimes
    \Omega(V,W_{2m,v};\lambda_{m+1,v},\one)
  \]
  as an \(H\times G_m(F_v)\)-representation.

  Set
  \[
    \frakO
    =
    \Omega(V,W_{2m+2,v};\lambda_{m+1,v},\one)
    \otimes\overline{\omega_{S_v}}.
  \]
  For \(z(t)=u_{m+1,1}(0,0,t)\in Z\),
  the action on \(\frakO\) in this model is
  \[
    (z(t)\Phi)(x)
    =
    \psi_v\!\left(t\bigl((x,x)_V-S_v\bigr)\right)\Phi(x).
  \]
  Taking \(Z\)-coinvariants therefore amounts to restricting functions to
  \[
    X_{S_v}(V)
    =
    \{x\in V\mid (x,x)_V=S_v\}.
  \]
  Choose \(x\in X_{S_v}(V)\) and put \(V_x=(E_vx)^\perp\) and \(H_x=\U(V_x)\).
  By Witt's theorem,
  \(X_{S_v}(V)\) is a single \(H\)-orbit with stabilizer \(H_x\).
  Thus we have an isomorphism of \(H\times G_m(F_v)\)-representations
  \[
    \frakO_Z
    \simeq
    \mathrm{c\text{-}Ind}_{H_x}^{H}
    \left(
    \Omega(V,W_{2m,v};\lambda_{m+1,v},\one)
    \otimes\overline{\omega_{S_v}}
    \right).
  \]

  Since \(V=E_vx\perp V_x\) and \(\lambda_{m+1,v}=\chi_{-,v}\lambda_{m,v}\),
  the decomposition property of the Weil representation gives
  \[
    \Omega(V,W_{2m,v};\lambda_{m+1,v},\one)
    \simeq
    \omega_{S_v}\otimes
    \Omega(V_x,W_{2m,v};\lambda_{m,v},\one)
  \]
  as an \(H_x\times G_m(F_v)\)-representation.
  Under the preceding compact-induction realization of \(\frakO_Z\),
  the inducing representation attached to \(x\) is therefore
  \[
    \omega_{S_v}\otimes\overline{\omega_{S_v}}
    \otimes
    \Omega(V_x,W_{2m,v};\lambda_{m,v},\one).
  \]
  The induced \(U/Z\)-action is diagonal on the first two factors and trivial on \(\Omega(V_x,W_{2m,v};\lambda_{m,v},\one)\).
  Since the central characters of \(\omega_{S_v}\) and \(\overline{\omega_{S_v}}\) cancel,
  this indeed defines an action of \(U/Z\).
  By the Stone--von Neumann theorem and Schur's lemma,
  the natural pairing induces an isomorphism
  \[
    \left(
    \omega_{S_v}\otimes\overline{\omega_{S_v}}
    \right)_{U/Z}
    \simeq
    \bbC.
  \]
  Consequently,
  \[
    \frakO_U
    \simeq
    \mathrm{c\text{-}Ind}_{H_x}^{H}
    \Omega(V_x,W_{2m,v};\lambda_{m,v},\one).
  \]

  Taking \(H\)-coinvariants and using compact induction,
  we obtain
  \begin{align*}
    \FJ_{S_v}(\Theta_{m+1,v}^{\delta})
     & =
    \left(
    \Theta_{m+1,v}^{\delta}
    \otimes\overline{\omega_{S_v}}
    \right)_U
    \\
     & \simeq
    \left(
    \Omega(V_x,W_{2m,v};\lambda_{m,v},\one)
    \otimes(\eta|_{H_x})^\vee
    \right)_{H_x}
    \\
     & =
    \Theta(V_x,W_{2m,v};\lambda_{m,v},\one)
    (\gamma_v^\natural\circ\det_{V_x}).
  \end{align*}
  Here \(\eta|_{H_x}=\gamma_v^\natural\circ\det_{V_x}\) because \(H_x\) fixes \(x\).

  It remains to identify \(V_x\).
  If \(A_x\) is a Gram matrix of \(V_x\),
  then \(\diag(S_v,A_x)\) is a Gram matrix of \(V\).
  Hence
  \begin{align*}
    \delta
     & =
    \chi_v\!\left(
    (-1)^{m(m+1)/2}S_v\det A_x
    \right)
    \\
     & =
    \chi_v(S_v)\epsilon_{m,v}(V_x),
  \end{align*}
  since \(m\) is even.
  Thus \(V_x\simeq V_{m,v}^{\delta\chi_v(S_v)}\),
  so the theta lift above is \(\Theta_{m,v}^{\delta\chi_v(S_v)}\).
\end{proof}
\begin{rem}
  The same mixed-model argument applies to the reducible degenerate principal-series case \(\left.\zeta_{m+1,v}\right|_{F_v^\times}=\chi_v^{\,m+1}\),
  giving
  \[
    \FJ_{S_v}
    \left(
      A_{m+1,v}^{\delta}(\zeta_{m+1,v})
    \right)
    \simeq
    A_{m,v}^{\delta\chi_v(S_v)}(\zeta_{m,v}).
  \]
\end{rem}

\subsection{Singleton local packets}

Suppose that either \(v\) is split in \(E/F\),
or that \(E_v/F_v\) is a field and \(\pi_{h,v}\) is supercuspidal with
\[
  \pi_{h,v}\not\simeq\pi_{h,v}\otimes\chi_v.
\]
In the split case,
\cite[Appendix~B]{Yamana2020HilbertHermitian} shows that both the odd- and even-degree local packets are singletons.
In the nonsplit supercuspidal case,
the odd-degree assertion is Lemma~\ref{lem:odd-supercuspidal-properties}.

In even degree,
let \(\phi_{m,v}\) be the tempered \(L\)-parameter obtained from \(\psi_{m,v}(h)\) by exchanging the Arthur and Weil--Deligne \(\SL_2(\bbC)\)-factors.
It has a single irreducible summand,
so its local component group is isomorphic to \(\bbZ/2\bbZ\),
with nontrivial distinguished central element.
By~\cite[Theorem~2.5.1]{ChenZou2021LLC},
there is therefore exactly one member of the corresponding \(L\)-packet on the quasi-split group \(G_m(F_v)\).
By the Zelevinsky--Aubert duality described in \cite[Remark~2.2(2) and Lemma~2.4]{ChenGan2025TwistedGGP},
the same is true of \(\Pi_{\psi_{m,v}(h)}\).

By Proposition~\ref{prop:even-nonvanishing},
choose an irreducible automorphic constituent \(\sigma\) of \(\calV_{m,S}(h)\),
arising from an irreducible constituent \(\rho\) of the odd-degree lift.
Localizing the global Fourier--Jacobi map at \(v\) gives
\[
  \Hom_{G_m(F_v)}
  \left(
  \FJ_{S_v}(\rho_v),
  \sigma_v
  \right)
  \neq0.
\]
By Corollary~\ref{cor:global-packet-identification},
\(\sigma_v\) is the unique member of \(\Pi_{\psi_{m,v}(h)}\).
Hence the nonvanishing of the Hom space above gives the asserted descent from the unique odd-degree local constituent.

\subsection{Local constituents and descent}

Let \(\Sigma_{\spl}\) denote the set of split finite places.

Suppose that \(v\) is nonsplit and \(\pi_{h,v}\) is not supercuspidal.
Recall that
\[
  \xi_v
  =
  \left.\zeta_{m+1,v}\right|_{F_v^\times}
  =
  \mu_v^2\omega_{h,v}^{-1}.
\]
According to the value of \(\xi_v\),
define
\begin{align*}
  \Sigma_{\pr,\irr}
   & =\{v\mid\xi_v\notin\{\chi_v,|\cdot|_{F_v}\}\}, \\
  \Sigma_{\st}
   & =\{v\mid\xi_v=|\cdot|_{F_v}\},                 \\
  \Sigma_{\pr,2}
   & =\{v\mid\xi_v=\chi_v\},
\end{align*}
where in each case \(v\) is understood to be nonsplit and \(\pi_{h,v}\) nonsupercuspidal.

For the nonsplit supercuspidal places,
put
\begin{align*}
  \Sigma_{\sc}
   & =\{v\mid\pi_{h,v}\not\simeq\pi_{h,v}\otimes\chi_v\}, \\
  \Sigma_{\dh}
   & =\{v\mid\pi_{h,v}\simeq\pi_{h,v}\otimes\chi_v\}.
\end{align*}
Together with \(\Sigma_{\spl}\),
these five sets partition the finite places.

Finally,
put
\[
  \Sigma_2=\Sigma_{\pr,2}\sqcup\Sigma_{\dh}.
\]
These are precisely the places at which \(\Pi_{\psi_{n,v}(h)}\) has two members for \(n\in\{m,m+1\}\).

\begin{rem}\label{rem:global-CM-local-types}
  If \(h\) has CM by \(E\),
  then \(\Sigma_{\pr,\irr}=\Sigma_{\st}=\Sigma_{\sc}=\varnothing\).
  Thus every nonsplit finite place lies in \(\Sigma_{\pr,2}\sqcup\Sigma_{\dh}\),
  and the odd-degree restriction has two constituents there.
\end{rem}

For \(n\in\{m,m+1\}\),
we define the local constituents by
\[
  \rho_{n,v}=
  \begin{cases}
    \text{the unique member of }\Pi_{\psi_{n,v}(h)}
                             & (v\in\Sigma_{\spl}\sqcup\Sigma_{\sc}), \\
    I_{n,v}(\zeta_{n,v})     & (v\in\Sigma_{\pr,\irr}),               \\
    A_{n,v}^{+}(\zeta_{n,v}) & (v\in\Sigma_{\st}),
  \end{cases}
\]
and,
for \(\delta\in\{\pm1\}\),
\[
  \rho_{n,v}^{\delta}=
  \begin{cases}
    A_{n,v}^{\delta}(\zeta_{n,v}) & (v\in\Sigma_{\pr,2}), \\
    \Theta_{n,v}^{\delta}         & (v\in\Sigma_{\dh}).
  \end{cases}
\]

\begin{thm}
  \label{thm:complete-local-selection}
  Let \(v\in\bfh\).

  If \(v\in\Sigma_2\),
  then for every \(\delta\in\{\pm1\}\),
  \[
    \Hom_{G_m(F_v)}\!\left(
    \FJ_{S_v}(\rho_{m+1,v}^{\delta}),
    \rho_{m,v}^{\delta\chi_v(S_v)}
    \right)\neq0.
  \]
  If \(v\notin\Sigma_2\),
  then
  \[
    \Hom_{G_m(F_v)}\!\left(
    \FJ_{S_v}(\rho_{m+1,v}),
    \rho_{m,v}
    \right)\neq0.
  \]
\end{thm}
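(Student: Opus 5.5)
The proof is a case analysis over the partition of finite places into \(\Sigma_{\spl}\), \(\Sigma_{\sc}\), \(\Sigma_{\pr,\irr}\), \(\Sigma_{\st}\), \(\Sigma_{\pr,2}\) and \(\Sigma_{\dh}\). In each case we exhibit a nonzero \(G_m(F_v)\)-map out of the Fourier--Jacobi module.

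The common mechanism is the following. Suppose \(\rho\) is a \(G_{m+1}(F_v)\)-subrepresentation of \(I_{m+1,v}(\zeta_{m+1,v})\). As remarked after Definition~\ref{def:local-FJ-module}, the restriction of \(\beta_{S_v,\zeta_{m+1,v}}^{\chi_{-,v}}\) to \(\rho\otimes\overline{\omega_{S_v}}\) is invariant under the diagonal action of \(U_{m+1,1}(F_v)\). It therefore factors through \(\FJ_{S_v}(\rho)\) and gives a \(G_m(F_v)\)-map onto its image. So whenever Theorem~\ref{thm:local-one-block-FJ} identifies that image with the target constituent, the Hom space contains a nonzero (surjective) element.

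The nonsplit nonsupercuspidal places are handled by Proposition~\ref{prop:odd-nonsupercuspidal-structure} together with Theorem~\ref{thm:local-one-block-FJ}.
\begin{itemize}
  \item For \(v\in\Sigma_{\pr,2}\), we have \(\xi_v=\chi_v\), which equals \(\chi_v^{m+1}\) since \(m\) is even. Theorem~\ref{thm:local-one-block-FJ}(3) maps \(A_{m+1,v}^{\delta}\otimes\overline{\omega_{S_v}}\) onto \(A_{m,v}^{\delta\chi_v(S_v)}=\rho_{m,v}^{\delta\chi_v(S_v)}\).
  \item For \(v\in\Sigma_{\st}\), Theorem~\ref{thm:local-one-block-FJ}(2) maps onto \(A_{m,v}^{+}(\zeta_{m,v})=\rho_{m,v}\).
  \item For \(v\in\Sigma_{\pr,\irr}\), we apply Theorem~\ref{thm:local-one-block-FJ}(1). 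This requires two inputs: irreducibility of both \(I_{m+1,v}(\zeta_{m+1,v})\) and \(I_{m,v}(\zeta_{m,v})\), and the bound \(-1/2\le\Re(\zeta_{m+1,v})<1/2\).
\end{itemize}

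Checking the hypotheses of case (1) is the main obstacle. The real part of \(\zeta_{m+1,v}\) comes from the exponent of \(\mu_v\). Since \(\pi_{h,v}\) is a unitary generic irreducible principal series or a twisted Steinberg, known bounds toward Ramanujan give \(|\Re(\mu_v^2\omega_{h,v}^{-1})|<1\) in the principal series case, i.e. \(|\Re\zeta_{m+1,v}|<1/2\). The Steinberg case lies in \(\Sigma_{\st}\) by \eqref{eq:steinberg-xi}, so it does not arise here. Irreducibility then follows from the Kudla--Sweet reducibility criterion: reducibility of \(I_{n,v}(\zeta)\) occurs only when \(\zeta|_{F_v^\times}\) is \(\chi_v^{n}|\cdot|^{s}\) at specific half-integral exponents \(s\) of size at least \(1/2\), or equals \(\chi_v^n\) at \(s=0\). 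The exclusion \(\xi_v\notin\{\chi_v,|\cdot|_{F_v}\}\) rules out the point \(s=0\) in degree \(m+1\). In degree \(m\), we have \(\zeta_{m,v}|_{F_v^\times}=\xi_v\chi_v\), so the point \(s=0\) would require \(\xi_v\chi_v=\chi_v^m=\one\), i.e. \(\xi_v=\chi_v\), which is again excluded. If the required range condition is delicate at boundary exponents, I would fall back on a global argument: \(\rho_{m,v}\) is then the unique member of a singleton packet, and we can argue as in the singleton case below.

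For \(v\in\Sigma_{\dh}\), Theorem~\ref{thm:local-dihedral-FJ} gives an isomorphism \(\FJ_{S_v}(\Theta_{m+1,v}^{\delta})\simeq\Theta_{m,v}^{\delta\chi_v(S_v)}\). This representation is irreducible and nonzero by Proposition~\ref{prop:dihedral-local-constituents}, so the identity map furnishes the required nonzero element.

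For \(v\in\Sigma_{\spl}\sqcup\Sigma_{\sc}\), we use the global argument of the subsection on singleton packets. Proposition~\ref{prop:even-nonvanishing} provides an automorphic constituent \(\sigma\) of \(\calV_{m,S}(h)\) that arises from a constituent \(\rho\) of the odd-degree lift. Localizing the global Fourier--Jacobi integral at \(v\) gives a nonzero map \(\FJ_{S_v}(\rho_v)\to\sigma_v\), where:
\begin{itemize}
  \item \(\rho_v=\rho_{m+1,v}\), because the odd-degree packet is a singleton (Lemma~\ref{lem:odd-supercuspidal-properties} and Yamana's Appendix~B);
  \item \(\sigma_v=\rho_{m,v}\), by Corollary~\ref{cor:global-packet-identification} and the singleton property in even degree.
\end{itemize}
The one point needing care is that the localization is legitimate. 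The global functional factors as a tensor product of local Fourier--Jacobi integrals on pure tensors, so the local factor at \(v\) of a nonzero global map is itself nonzero.
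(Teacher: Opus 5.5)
Your proposal is correct and follows the paper's proof. You use the same case split: Theorem~\ref{thm:local-one-block-FJ}, factored through the local Fourier--Jacobi module, at nonsplit nonsupercuspidal places; Theorem~\ref{thm:local-dihedral-FJ} with Proposition~\ref{prop:dihedral-local-constituents} at dihedral places; and the global singleton-packet argument at split and non-dihedral supercuspidal places.

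Two small remarks. Your explicit check of the hypotheses of Theorem~\ref{thm:local-one-block-FJ}(1) on \(\Sigma_{\pr,\irr}\) fills in a step the paper leaves implicit, and unitarity of \(\pi_{h,v}\) alone already gives \(|\Re\zeta_{m+1,v}|<1/2\). At the singleton places, the cleaner justification for localizing is not a tensor factorization of the global integral. Instead, fix a vector away from \(v\) at which the global map is nonzero, and compose with a suitable linear functional on \(\sigma^v\); this yields a nonzero \(G_m(F_v)\)-map \(\FJ_{S_v}(\rho_v)\to\sigma_v\).
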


\begin{proof}
  The assertions for principal series at nonsplit places are exactly those of Theorem~\ref{thm:local-one-block-FJ},
  using the factorization of \(\beta_{S,v}^{\chi_-}\) through the local Fourier--Jacobi module described after Definition~\ref{def:local-FJ-module}.
  The dihedral assertion follows from Proposition~\ref{prop:dihedral-local-constituents} and Theorem~\ref{thm:local-dihedral-FJ}.
  The split and non-dihedral supercuspidal cases follow from the results above,
  since the relevant local packets are singletons.
\end{proof}

\section{Global decomposition}
\label{sec:finite-part}

Fix \(S\in F_{>0}\) throughout this section.

At every good place \(v\in\Sigma_2\) at which \(\chi_{-,v}\) is unramified and \(S\in\calO_{F_v}^\times\),
we have \(v\in\Sigma_{\pr,2}\),
and the spherical constituent is the \(+\)-constituent in both degrees.
Indeed,
for an integral unimodular \(B_v\),
\(\epsilon_{r,v}(B_v)=+1\),
and~\eqref{eq:spherical-Whittaker-normalization} identifies the spherical constituents with \(A_{m+1,v}^{+}(\zeta_v^+)\) and \(A_{m,v}^{+}(\zeta_v^-)\),
respectively.
No place in \(\Sigma_{\dh}\) is good.

Let
\[
  \frakS
  =
  \left\{
  (\delta_v)_{v\in\Sigma_2}
  \ \middle|\
  \delta_v\in\{\pm1\},\
  \delta_v=+1\text{ for almost all }v
  \right\}.
\]
For \(\delta\in\frakS\),
put
\[
  \rho_{m+1}(\delta)
  =
  \left({\bigotimes_{v\in\Sigma_2}}'\rho_{m+1,v}^{\delta_v}\right)
  \otimes
  \left({\bigotimes_{v\notin\Sigma_2}}'\rho_{m+1,v}\right).
\]
Put \(\delta^S=(\delta_v\chi_v(S_v))_{v\in\Sigma_2}\) and
\[
  \rho_m(\delta^S)
  =
  \left({\bigotimes_{v\in\Sigma_2}}'\rho_{m,v}^{\delta_v\chi_v(S_v)}\right)
  \otimes
  \left({\bigotimes_{v\notin\Sigma_2}}'\rho_{m,v}\right).
\]

Taking the restricted tensor product of the local decompositions gives the following algebraic direct sum decomposition:
\begin{equation}\label{eq:odd-global-restriction}
  \left.\Pi_{m+1}(h)\right|_{G_{m+1}(\bbA_\bfh)}
  \simeq
  \bigoplus_{\delta\in\frakS}\rho_{m+1}(\delta).
\end{equation}

Put \(\Sigma_1=\Sigma_{\pr,\irr}\sqcup\Sigma_{\st}\sqcup\Sigma_{\sc}\) and \(d=[F:\bbQ]\).

\begin{lem}
  \label{lem:global-sign-condition}
  Let \(\delta\in\frakS\).
  There exists \(T\in\Her_m(F)_{>0}\) such that \(\epsilon_{m+1,v}(S\oplus T)=\delta_v\) for every \(v\in\Sigma_2\) if and only if either \(\Sigma_1\neq\varnothing\),
  or \(\Sigma_1=\varnothing\) and
  \[
    \prod_{v\in\Sigma_2}\delta_v=(-1)^{dm(m+1)/2}.
  \]
\end{lem}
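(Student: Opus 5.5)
The plan is to reduce the statement to finding a single global scalar with prescribed local norm-residue symbols, and then to apply global class field theory for \(E/F\). For \(T\in\Her_m(F)\) nondegenerate, the definition gives
\[
  \epsilon_{m+1,v}(S\oplus T)=\chi_v\bigl((-1)^{m(m+1)/2}S\det T\bigr).
\]
So the condition on \(T\) depends only on \(a=(-1)^{m(m+1)/2}S\det T\in F^\times\), and more precisely only on its local classes modulo norms at the places \(v\in\Sigma_2\). Two facts will be used throughout.

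\begin{itemize}
  \item The reciprocity law \(\prod_v\chi_v(a)=1\) holds for all \(a\in F^\times\).
  \item Conversely, if \((\varepsilon_v)_v\in\{\pm1\}\) with \(\varepsilon_v=1\) for almost all \(v\) and \(\prod_v\varepsilon_v=1\), then there is \(a\in F^\times\) with \(\chi_v(a)=\varepsilon_v\) for all \(v\). This holds because \(\bbA^\times/F^\times N_{E/F}(\bbA_E^\times)\simeq\{\pm1\}\) via \(\prod_v\chi_v\): an idele with local components \(\varepsilon\)-classes lies in \(F^\times N(\bbA_E^\times)\), and \(\chi_v\) is trivial on local norms.
\end{itemize}

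The local symbols are constrained by the type of place. At split \(v\), \(\chi_v\) is trivial. At each real place, \(E_v=\bbC\), so \(\chi_v\) is the sign character. By the partition preceding Theorem~\ref{thm:complete-local-selection}, the nonsplit finite places are exactly \(\Sigma_1\sqcup\Sigma_2\).

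\textbf{Necessity.} Suppose \(T\in\Her_m(F)_{>0}\) satisfies the condition. Then \(\det T\) is totally positive and \(S\in F_{>0}\), so \(\chi_v(a)=(-1)^{m(m+1)/2}\) at each of the \(d\) real places. Split places contribute \(1\). Reciprocity therefore gives
\[
  \prod_{v\in\Sigma_1}\chi_v(a)\prod_{v\in\Sigma_2}\delta_v=(-1)^{dm(m+1)/2}.
\]
When \(\Sigma_1=\varnothing\), this is exactly the stated product condition.

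\textbf{Sufficiency.} Define \(\varepsilon_v\) as follows:
\begin{itemize}
  \item \(\varepsilon_v=(-1)^{m(m+1)/2}\) at the real places;
  \item \(\varepsilon_v=1\) at split places;
  \item \(\varepsilon_v=\delta_v\) for \(v\in\Sigma_2\);
  \item \(\varepsilon_v=1\) for \(v\in\Sigma_1\), except that if \(\Sigma_1\neq\varnothing\) we fix one \(w\in\Sigma_1\) and choose \(\varepsilon_w\) so that \(\prod_v\varepsilon_v=1\).
\end{itemize}
If \(\Sigma_1=\varnothing\), the hypothesis \(\prod_{v\in\Sigma_2}\delta_v=(-1)^{dm(m+1)/2}\) is precisely \(\prod_v\varepsilon_v=1\). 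The family has almost all entries \(1\) because \(\delta\in\frakS\).

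The converse fact above produces \(a\in F^\times\) with \(\chi_v(a)=\varepsilon_v\) for all \(v\). Put \(d_0=(-1)^{m(m+1)/2}a/S\in F^\times\). At each real place, \(\chi_v(d_0)=\varepsilon_v\cdot(-1)^{m(m+1)/2}\cdot\chi_v(S)=1\), so \(d_0\) is totally positive. Take \(T=\diag(d_0,1,\ldots,1)\in\Her_m(F)_{>0}\). Then \(\det T=d_0\), and \(\epsilon_{m+1,v}(S\oplus T)=\chi_v(a)=\delta_v\) for every \(v\in\Sigma_2\).

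The only step with real content is the converse fact, i.e. the exactness of \(F^\times\to\bigoplus_v\{\pm1\}\xrightarrow{\prod}\{\pm1\}\). I would state it via the global norm index \([\bbA^\times:F^\times N(\bbA_E^\times)]=2\) together with local class field theory. The remaining care point is that \(\Sigma_2\) may be infinite, which is harmless since \(\delta_v=+1\) for almost all \(v\).
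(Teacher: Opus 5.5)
Your proof is correct and follows the paper's argument essentially step for step. You reduce to the scalar $(-1)^{m(m+1)/2}S\det T$, use reciprocity for necessity, and for sufficiency prescribe the same local signs (adjusting at one place of $\Sigma_1$), invoke the local--global exact sequence for norm classes, and take a diagonal $T$ with a single nontrivial entry. The one thing to tighten is that your ``converse fact'' needs the additional hypothesis $\varepsilon_v=1$ at every split place, since $\chi_v$ is trivial there; your choice of $\varepsilon$ does satisfy this.
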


\begin{proof}
  For \(T\in\Her_m(F)_{>0}\),
  set \(b_T=(-1)^{m(m+1)/2}S\det T\).
  Then \(\epsilon_{m+1,v}(S\oplus T)=\chi_v(b_T)\),
  so the required local conditions are equivalent to \(\chi_v(b_T)=\delta_v\) for every \(v\in\Sigma_2\).
  As \(S\) is totally positive,
  the sign of \(b_T\) at every real place is \((-1)^{m(m+1)/2}\).

  If \(\Sigma_1=\varnothing\),
  every finite place outside \(\Sigma_2\) is split in \(E\),
  and hence \(\chi_v\) is trivial there.
  The product formula applied to \(b_T\) therefore gives
  \[
    \prod_{v\in\Sigma_2}\delta_v=(-1)^{dm(m+1)/2},
  \]
  proving the necessity.

  Conversely,
  suppose that one of the two conditions in the statement holds,
  and prescribe local signs by
  \[
    s_v=
    \begin{cases}
      \delta_v        & (v\in\Sigma_2),     \\
      (-1)^{m(m+1)/2} & (v\mid\infty),      \\
      1               & (\text{otherwise}).
    \end{cases}
  \]
  If \(\Sigma_1\neq\varnothing\),
  choose \(v_0\in\Sigma_1\) and replace the value of \(s_{v_0}\) by
  \[
    s_{v_0}=(-1)^{dm(m+1)/2}\prod_{v\in\Sigma_2}\delta_v.
  \]
  Then \(\prod_v s_v=1\).
  If \(\Sigma_1=\varnothing\),
  the assumed product condition gives \(\prod_v s_v=1\).
  By the local--global exact sequence for norm classes \cite[Chapter~VI, Proposition~5.6 and Corollary~5.7]{Neukirch1999ANT},
  there is \(b\in F^\times\) such that \(\chi_v(b)=s_v\) at every place.
  Put \(a=b/S\) and \(T=\diag(1,\ldots,1,(-1)^{m(m+1)/2}a)\).
  Then \(T\) is positive definite and \(\epsilon_{m+1,v}(S\oplus T)=\chi_v(b)=\delta_v\) for every \(v\in\Sigma_2\).
\end{proof}

Put
\[
  \frakS_0
  =
  \left\{
    \delta\in\frakS
    \;\middle|\;
    \begin{array}{c}
      \text{there exists }T\in\Her_m(F)_{>0}\text{ such that}\\
      \epsilon_{m+1,v}(S\oplus T)=\delta_v\text{ for every }v\in\Sigma_2
    \end{array}
  \right\}.
\]
By Lemma~\ref{lem:global-sign-condition},
this subset is all of \(\frakS\) if \(\Sigma_1\neq\varnothing\);
if \(\Sigma_1=\varnothing\),
it consists precisely of the \(\delta\in\frakS\) satisfying
\[
  \prod_{v\in\Sigma_2}\delta_v=(-1)^{dm(m+1)/2}.
\]
Let \(\calD_{m,\infty}^{\hol}\) denote the holomorphic lowest-weight representation of \(G_m(F_\infty)\) of scalar weight \((\ell_m,\eta_m)\).

\begin{thm}
  \label{thm:exact-finite-decomposition}
  The irreducible automorphic representations occurring in \(\calV_{m,S}(h)\) are precisely \(\calD_{m,\infty}^{\hol}\otimes\rho_m(\delta^S)\) with \(\delta\in\frakS_0\).
  They are pairwise nonisomorphic and occur with multiplicity one.
  In particular,
  \[
    \Pi_{m,S}(h)\simeq\bigoplus_{\delta\in\frakS_0}\rho_m(\delta^S).
  \]
\end{thm}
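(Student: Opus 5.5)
The plan is to decompose \(\widetilde V_{m,S}(h)\) via the restriction \eqref{eq:odd-global-restriction} and push each piece through Fourier--Jacobi descent. First, \(\widetilde V_{m,S}(h)=\bigoplus_{\delta\in\frakS}\rho_{m+1}(\delta)\otimes\overline{\omega}_{S,\bfh}\) as \(G_m(\bbA_\bfh)\)-modules. The map \(\widetilde I_{m,S}^\kappa(h;\cdot)\) is invariant under the diagonal \(U_{m+1,1}\)-action, because the integral over \(U_{m+1,1}(F)\backslash U_{m+1,1}(\bbA)\) absorbs translations. Hence its restriction to the \(\delta\)-summand factors through the restricted tensor product \(\bigotimes_v'\FJ_{S_v}(\rho_{m+1,v}^{(\delta_v)})\).

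The image \(\calI(\delta)\) of the \(\delta\)-summand lies in \(\calS_m^{\ell_m,\eta_m}\). By Corollary~\ref{cor:global-packet-identification}, every irreducible constituent of \(\calI(\delta)\) has the form \(\calD_{m,\infty}^{\hol}\otimes\sigma_\bfh\) with \(\sigma_\bfh\in\Pi_{\psi_m(h)}\). At each finite place, \(\sigma_v\) is a quotient of \(\FJ_{S_v}(\rho^{(\delta_v)}_{m+1,v})\). At \(v\in\Sigma_2\) this forces \(\sigma_v\simeq\rho_{m,v}^{\delta_v\chi_v(S_v)}\):
\begin{itemize}
\item At \(\Sigma_{\dh}\) this follows from Theorem~\ref{thm:local-dihedral-FJ}, which gives an isomorphism of the whole module.
\item At \(\Sigma_{\pr,2}\) it follows from Theorem~\ref{thm:local-one-block-FJ}(3) and the Remark after Theorem~\ref{thm:local-dihedral-FJ}.
\end{itemize}
At \(v\notin\Sigma_2\) the packet is a singleton, so \(\sigma_v\simeq\rho_{m,v}\). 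For \(\Sigma_{\pr,\irr}\) and \(\Sigma_{\st}\) one may alternatively argue via Theorem~\ref{thm:local-one-block-FJ}(1),(2), using that \(\beta\) has image in the stated irreducible.

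The main subtlety is justifying that constituents of \(\calI(\delta)\) are quotients of the local FJ modules. I would handle this by showing that \(\calI(\delta)\) is spanned by forms whose Whittaker data \(\calJ_T^S\) factor as products of the local functionals \eqref{eq:FJ-Whittaker-identity-local}, so that each \(\calI(\delta)\) is a quotient of \(\bigotimes_v\FJ_{S_v}\). Combined with the irreducibility of these images at every place, this gives \(\calI(\delta)\simeq\rho_m(\delta^S)\) or \(\calI(\delta)=0\).

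Next, decide which \(\calI(\delta)\) are nonzero. Nonvanishing is tested by Fourier coefficients. By Lemma~\ref{lem:archimedean-FJ-factor}, \(\calJ^S_T\) on the \(\delta\)-summand involves \(\calW_{S\oplus T}\) restricted to \(\rho_{m+1}(\delta)\). By Proposition~\ref{prop:odd-nonsupercuspidal-structure}(3) and Lemma~\ref{lem:local-dihedral-theta-whittaker} (transported via Proposition~\ref{prop:dihedral-local-constituents}), this restriction vanishes unless \(\epsilon_{m+1,v}(S\oplus T)=\delta_v\) for all \(v\in\Sigma_2\). Conversely, if such a \(T\) exists, then \(\calW_{S\oplus T}\) is nonzero on \(\rho_{m+1}(\delta)\). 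Indeed, it is a product of local functionals, each nonzero on the relevant constituent: use Yamana's nonvanishing for \(\Sigma_1\) and split places, plus Lemma~\ref{lem:odd-supercuspidal-properties}. Then the choice \(\Phi=1_X\) used in Proposition~\ref{prop:even-nonvanishing} gives \(\calJ_T^S\neq0\). Hence \(\calI(\delta)\neq0\) exactly when \(\delta\in\frakS_0\), and Lemma~\ref{lem:global-sign-condition} identifies \(\frakS_0\) explicitly.

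Finally, the map \(\delta\mapsto\delta^S\) is injective, and the \(\rho_m(\delta^S)\) for distinct \(\delta\) are nonisomorphic: the local constituents \(\rho^{\pm}_{m,v}\) at any \(v\in\Sigma_2\) are distinguished by their Whittaker supports. So the images \(\calI(\delta)\) are pairwise nonisomorphic irreducibles. The span \(\calV_{m,S}(h)\) is therefore their direct sum, and since \(\ker\widetilde I\) is the sum of the kernels on the summands, \(\Pi_{m,S}(h)\simeq\bigoplus_{\delta\in\frakS_0}\rho_m(\delta^S)\).

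Multiplicity one in \(L^2_{\disc}\) follows from Proposition~\ref{prop:mok-global-multiplicity}. The parameter \(\psi_m(h)\) has multiplicity-free packets, and the local characters of distinct \(\sigma\) determine them, so each \(\pi\in\Pi_\psi(\epsilon_\psi)\) appears once. Consistency with \(\epsilon_\psi\) is automatic, since these representations occur. I expect the main obstacle to be the factorization step: showing rigorously that the global image of each \(\delta\)-summand is a quotient of the restricted tensor product of local FJ modules, and hence irreducible.
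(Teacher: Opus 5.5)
Your proposal is essentially the paper's proof. It uses the same splitting of \(\widetilde V_{m,S}(h)\) over \(\delta\in\frakS\), and the same sign test (Proposition~\ref{prop:odd-nonsupercuspidal-structure}, Lemma~\ref{lem:local-dihedral-theta-whittaker}, and the \(\Phi=1_X\) trick) to show that the \(\delta\)-piece survives exactly when \(\delta\in\frakS_0\). It then identifies local components via Theorems~\ref{thm:local-one-block-FJ} and~\ref{thm:local-dihedral-FJ} and the singleton packets, and gets multiplicity one from Proposition~\ref{prop:mok-global-multiplicity}.

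One step needs adjusting: the irreducibility of each nonzero image. You try to get it from irreducibility of the local images, but that is unavailable at \(v\in\Sigma_{\sc}\): there is no \(\beta_v\) there, and \(\FJ_{S_v}(\rho_{m+1,v})\) is never computed. Deduce it instead, as the paper does, from the fact that every constituent is \(\calD_{m,\infty}^{\hol}\otimes\rho_m(\delta^S)\), together with multiplicity one.
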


\begin{proof}
  We put \(\widetilde V(\delta)=\rho_{m+1}(\delta)\otimes\overline{\omega}_{S,\bfh}\) for \(\delta\in\frakS\).
  By~\eqref{eq:odd-global-restriction},
  \[
    \widetilde V_{m,S}(h)=\bigoplus_{\delta\in\frakS}\widetilde V(\delta).
  \]
  We claim that the image of \(\widetilde V(\delta)\) under \(\widetilde I_{m,S}^{\kappa}(h;\,\cdot)\) is nonzero if and only if \(\delta\in\frakS_0\).

  Suppose first that this image is nonzero.
  The Fourier expansion in Lemma~\ref{lem:archimedean-FJ-factor} gives \(T\in\Her_m(F)_{>0}\) for which \(\calJ_T^S\) does not vanish on \(\widetilde V(\delta)\).
  By Proposition~\ref{prop:odd-nonsupercuspidal-structure} and Lemma~\ref{lem:local-dihedral-theta-whittaker},
  this forces
  \[
    \epsilon_{m+1,v}(S\oplus T)=\delta_v
    \qquad(v\in\Sigma_2).
  \]
  Thus \(\delta\in\frakS_0\) by definition.

  Conversely,
  let \(\delta\in\frakS_0\),
  and choose \(T\in\Her_m(F)_{>0}\) as in the definition of \(\frakS_0\).
  Proposition~\ref{prop:odd-nonsupercuspidal-structure},
  Lemmas~\ref{lem:odd-supercuspidal-properties} and~\ref{lem:local-dihedral-theta-whittaker},
  and~\cite[Appendix~B]{Yamana2020HilbertHermitian} show that the restriction of \(\calW_{S\oplus T}\) to \(\rho_{m+1}(\delta)\) is nonzero.
  The argument in the proof of Proposition~\ref{prop:even-nonvanishing},
  applied to \(\rho_{m+1}(\delta)\),
  shows that the image of \(\widetilde V(\delta)\) is nonzero.
  This proves the claim.

  For each \(\delta\in\frakS_0\),
  Theorems~\ref{thm:local-one-block-FJ} and~\ref{thm:local-dihedral-FJ},
  together with the singleton cases,
  identify the finite part of the representation generated by the image as \(\rho_m(\delta^S)\).
  Its archimedean part is \(\calD_{m,\infty}^{\hol}\) by construction,
  and it occurs with multiplicity one by Corollary~\ref{cor:global-packet-identification} and Proposition~\ref{prop:mok-global-multiplicity}.
\end{proof}

\begin{cor}\label{cor:S-independence}
  For \(S,S'\in F_{>0}\),
  \[
    \Pi_{m,S}(h)\simeq\Pi_{m,S'}(h).
  \]
\end{cor}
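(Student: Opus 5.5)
By Theorem~\ref{thm:exact-finite-decomposition}, applied once to \(S\) and once to \(S'\),
\[
  \Pi_{m,S}(h)\simeq\bigoplus_{\delta\in\frakS_0(S)}\rho_m(\delta^S),
  \qquad
  \Pi_{m,S'}(h)\simeq\bigoplus_{\delta'\in\frakS_0(S')}\rho_m(\delta'^{S'}),
\]
where we write \(\frakS_0(S)\) to record the dependence of \(\frakS_0\) on the index. The plan is to reindex both sums by the even-degree sign vector \(\epsilon=\delta^S=(\delta_v\chi_v(S_v))_{v\in\Sigma_2}\). Then \(\rho_m(\delta^S)\) depends only on \(\epsilon\), and we must show that the set
\[
  \calE(S)=\{\delta^S\mid\delta\in\frakS_0(S)\}
\]
is independent of \(S\). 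Since the summands in each decomposition are pairwise nonisomorphic, equality of index sets gives the isomorphism.

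First we check that \(\delta\mapsto\delta^S\) is a bijection of \(\frakS\) onto itself. The map is an involution. It preserves the condition ``\(+1\) for almost all \(v\)'' because \(S\in\calO_{F_v}^\times\) for almost all \(v\), and \(\chi_v\) is unramified at almost all \(v\in\Sigma_2\), so \(\chi_v(S_v)=1\) for almost all \(v\). Hence the question reduces to identifying \(\calE(S)\) intrinsically.

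If \(\Sigma_1\neq\varnothing\), Lemma~\ref{lem:global-sign-condition} gives \(\frakS_0(S)=\frakS\), so \(\calE(S)=\frakS\) for every \(S\). If \(\Sigma_1=\varnothing\), the lemma says \(\frakS_0(S)\) consists of the \(\delta\) with \(\prod_{v\in\Sigma_2}\delta_v=(-1)^{dm(m+1)/2}\). We then compute
\[
  \prod_{v\in\Sigma_2}\delta_v\chi_v(S_v)
  =(-1)^{dm(m+1)/2}\prod_{v\in\Sigma_2}\chi_v(S_v).
\]
By the product formula, \(\prod_v\chi_v(S)=1\). Since \(\Sigma_1=\varnothing\), every finite place outside \(\Sigma_2\) is split, so \(\chi_v\) is trivial there. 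At each real place \(\chi_v(S_v)=1\) because \(S\) is totally positive. Therefore \(\prod_{v\in\Sigma_2}\chi_v(S_v)=1\), and
\[
  \calE(S)=\Bigl\{\epsilon\in\frakS\ \Bigm|\ \prod_{v\in\Sigma_2}\epsilon_v=(-1)^{dm(m+1)/2}\Bigr\},
\]
which does not involve \(S\).

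Alternatively, one can describe \(\calE(S)\) directly, avoiding the case split, as the set of \(\epsilon\) for which some \(T\in\Her_m(F)_{>0}\) satisfies \(\epsilon_{m,v}(T)=\epsilon_v\) on \(\Sigma_2\). This uses \(\epsilon_{m+1,v}(S\oplus T)=\chi_v(S_v)\epsilon_{m,v}(T)\) for even \(m\), which is visibly \(S\)-free. The main point requiring care is the product-formula step, namely that \(\chi_v(S)\) is trivial at every place outside \(\Sigma_2\). This is where \(\Sigma_1=\varnothing\) and total positivity of \(S\) enter, so it should be stated explicitly.
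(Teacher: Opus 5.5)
Your proof is correct and follows the paper's argument: the paper sets $a=S'/S$, shows that $\delta\mapsto(\delta_v\chi_v(a))_{v\in\Sigma_2}$ permutes $\frakS_0$ using the same product-formula fact ($\prod_{v\in\Sigma_2}\chi_v(a)=1$ when $\Sigma_1=\varnothing$), and notes that $\delta^{S'}$ is the $S$-twist of this permuted vector; this is your reindexing by $\epsilon=\delta^S$, written relative to $S$. Your alternative characterization of the index set through $\epsilon_{m+1,v}(S\oplus T)=\chi_v(S_v)\epsilon_{m,v}(T)$ is also valid; it gives a manifestly $S$-free description and bypasses the product formula altogether.
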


\begin{proof}
  Put \(a=S'/S\),
  and define
  \[
    \delta^{(a)}=(\delta_v\chi_v(a))_{v\in\Sigma_2}.
  \]
  The map \(\delta\mapsto\delta^{(a)}\) permutes \(\frakS_0\).
  Indeed,
  this is immediate if \(\Sigma_1\neq\varnothing\),
  while for \(\Sigma_1=\varnothing\) it follows from \(\prod_{v\in\Sigma_2}\chi_v(a)=1\),
  which is a consequence of the product formula and the positivity of \(a\).
  Since \(\delta^{S'}=(\delta^{(a)})^S\),
  the result follows from Theorem~\ref{thm:exact-finite-decomposition}.
\end{proof}

We henceforth denote this common isomorphism class by \(\Pi_m(h)\).

\section{Comparison with Ikeda's even-degree Hermitian lift over \texorpdfstring{\(\bbQ\)}{Q}}
\label{sec:Q-comparison}

We specialize the construction to \(F=\bbQ\) and compare it with the even-degree Hermitian lift of~\cite{Ikeda2008Lifting}.
By Corollary~\ref{cor:S-independence},
we take \(S=1\) throughout this section.

Let \(F=\bbQ\),
let \(E=K\) be an imaginary quadratic field of discriminant \(-D\),
write \(\calO_K\) for its ring of integers,
and put \(\chi=\chi_{K/\bbQ}\).
For every finite prime \(p\),
set
\begin{align*}
  K_{K,p}^{(m)}
   & =G_m(\bbQ_p)\cap\GL_{2m}(\calO_{K_p}), \\
  K_{K,\bfh}^{(m)}
   & =\prod_{p<\infty}K_{K,p}^{(m)}.
\end{align*}
Then
\[
  \Gamma_K^{(m)}
  =G_m(\bbQ)\cap K_{K,\bfh}^{(m)}
  =\U_{m,m}(\bbQ)\cap\GL_{2m}(\calO_K).
\]

Let \(m=2n\),
and let \(f\in S_{2k+1}(\Gamma_0(D),\chi)\) be a normalized primitive form.
We specialize the preceding construction by taking \(h=f\),
so that \(\kappa=2k+1\) and \(\omega_f=\chi\),
and choose \(\vartheta^+=\chi_-\).
This is allowed because \(\chi_-|_{\bbA_\bbQ^\times}=\chi=\omega_f\).
Then \(\vartheta^-=\one\).

We first describe the local components of \(\pi_f\) as principal series.
By~\cite[\S~9]{Ikeda2008Lifting},
for every prime \(p\) there is an unramified unitary character \(\mu_p:\bbQ_p^\times\rightarrow\bbC^\times\) such that
\[
  \pi_{f,p}
  \simeq
  \Ind_{B_2(\bbQ_p)}^{\GL_2(\bbQ_p)}
  \left(
  \mu_p\boxtimes\chi_p\mu_p^{-1}
  \right).
\]
Put
\[
  \alpha_p=\mu_p(p).
\]
For \(p\nmid D\),
this is the usual Satake parameter.
For \(p\mid D\),
this agrees with the convention in~\cite{Ikeda2008Lifting}
\[
  \alpha_p=p^{-k}a(p),
\]
where \(a(p)\) is the \(p\)-th Hecke eigenvalue of \(f\);
see~\cite[Introduction]{Ikeda2008Lifting}.

For every prime \(p\),
the definitions give
\begin{equation}
  \label{eq:Q-zeta-specialization}
  \zeta_p^+=\chi_{-,p}\cdot(\mu_p\circ N_{K_p/\bbQ_p}),
  \ \text{and}\
  \zeta_p^-=\mu_p\circ N_{K_p/\bbQ_p}.
\end{equation}
In particular,
\(\zeta_p^-\) is unramified even when \(p\mid D\).
Moreover,
every \(\pi_{f,p}\) is nonsupercuspidal,
and at a nonsplit prime
\begin{equation}
  \label{eq:Q-xi-specialization}
  \xi_p=\left.\zeta_p^+\right|_{\bbQ_p^\times}=\chi_p\mu_p^2.
\end{equation}
If \(p\mid D\),
then \(\xi_p\) is ramified,
while if \(p\nmid D\),
it is unitary.
Since \(\xi_p\neq|\cdot|_p\) and every \(\pi_{f,p}\) is nonsupercuspidal,
\begin{equation}
  \label{eq:Q-local-type-specialization}
  \Sigma_{\st}=\Sigma_{\sc}=\Sigma_{\dh}=\varnothing.
\end{equation}

For an inert prime \(p\nmid D\),
the corresponding assertions follow from the calculation at good places above.
It remains to treat the ramified primes.

\begin{lem}
  \label{lem:Q-full-level-local}
  Let \(p\mid D\).
  Then
  \[
    \dim_\bbC I_{m,p}(\zeta_p^-)^{K_{K,p}^{(m)}}=1.
  \]
  If \(I_{m,p}(\zeta_p^-)\) is reducible,
  that is,
  \[
    I_{m,p}(\zeta_p^-)
    =
    A_{m,p}^{+}(\zeta_p^-)
    \oplus
    A_{m,p}^{-}(\zeta_p^-),
  \]
  then the \(K_{K,p}^{(m)}\)-fixed subspaces of its constituents satisfy
  \[
    \dim_\bbC A_{m,p}^{+}(\zeta_p^-)^{K_{K,p}^{(m)}}=1,
    \ \text{ and }\
    A_{m,p}^{-}(\zeta_p^-)^{K_{K,p}^{(m)}}=0.
  \]
\end{lem}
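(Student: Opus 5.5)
The plan is to prove the dimension count from the Iwasawa decomposition, and then to locate the unique fixed vector inside one of the two summands by evaluating a single degenerate Whittaker functional. Since \(p\mid D\), the extension \(K_p/\bbQ_p\) is ramified. The group \(K_{K,p}^{(m)}\) is then the stabilizer of the self-dual lattice \(\calO_{K_p}^{2m}\) for the skew-Hermitian form \(J_m\), which is a special (not hyperspecial) maximal compact subgroup of \(G_m(\bbQ_p)\).

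\emph{Step 1: the dimension count.} For special maximal compacts the Iwasawa decomposition still holds: \(G_m(\bbQ_p)=P_m(\bbQ_p)K_{K,p}^{(m)}\), where \(P_m\) is the Siegel parabolic intersected with \(G_m\). Consequently a \(K_{K,p}^{(m)}\)-fixed vector \(f\in I_{m,p}(\zeta_p^-)\) is determined by \(f(1)\). Such a vector exists (with \(f(1)=1\)) if and only if the inducing character is trivial on \(P_m(\bbQ_p)\cap K_{K,p}^{(m)}\). This intersection consists of \(m_m(A)n_m(Z)\) with \(A\in\GL_m(\calO_{K_p})\) and \(Z\) integral. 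By~\eqref{eq:Q-zeta-specialization} we have \(\zeta_p^-(\det A)=\mu_p(N_{K_p/\bbQ_p}\det A)=1\), because \(\mu_p\) is unramified and \(N\det A\in\bbZ_p^\times\); also \(|\det A|_{K_p}=1\). Hence \(\dim I_{m,p}(\zeta_p^-)^{K_{K,p}^{(m)}}=1\), spanned by the section \(f_p^\circ\) with \(f_p^\circ(1)=1\).

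\emph{Step 2: locating \(f_p^\circ\).} Assume now that \(I_{m,p}(\zeta_p^-)\) is reducible. Since the two summands are \(K_{K,p}^{(m)}\)-stable and the fixed space is one-dimensional, \(f_p^\circ\) lies in exactly one of \(A_{m,p}^{\pm}(\zeta_p^-)\), and the other summand has no fixed vectors. By the characterization of the summands through degenerate Whittaker functionals (the degree-\(m\) analogue of Proposition~\ref{prop:odd-nonsupercuspidal-structure}(3), i.e.\ \cite[Proposition~5.4(2),(4)]{Yamana2020HilbertHermitian}), it suffices to find one nondegenerate \(B\in\Her_m(\bbQ_p)\) with \(\epsilon_{m,p}(B)=+1\) and \(w_{B,p}^{\zeta_p^-}(f_p^\circ)\neq0\). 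Indeed, if \(f_p^\circ\) lay in \(A^-\), then every functional \(w_B\) with \(\epsilon_{m,p}(B)=+1\) would vanish on it.

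I would take \(B\) to be the Gram matrix of an orthogonal sum of \(n\) hyperbolic planes, normalized so that \(B\) is unimodular with respect to the dual lattice \(\calO_{K_p}^\vee\) (the inverse different) and the conductor \(\bbZ_p\) of \(\psi_p\). Then \(\det B=(-1)^n\cdot(\text{norm})\), and \(\epsilon_{m,p}(B)=\chi_p((-1)^{n(2n-1)}\det B)\) equals \(\chi_p\) of a norm, which is \(+1\). By~\eqref{eq:local-Jacquet-functional}, \(w_B(f_p^\circ)\) equals, up to a nonzero normalization, the local Hermitian Siegel series of \(B\) at \(s=0\). For such a "maximal" \(B\), Ikeda's computation of the ramified Hermitian Siegel series \cite[\S\S~2--3]{Ikeda2008Lifting} shows that the relevant polynomial is a nonzero constant. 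So the normalized value is nonzero at the unramified, unitary-type parameter \(\zeta_p^-\), giving \(f_p^\circ\in A^+\).

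\emph{Expected obstacle.} The main difficulty is Step 2: controlling the Siegel series and the normalizing factor \(b_{m,p}\) at a ramified prime, where Lemma~\ref{lem:unramified-Whittaker-normalization} does not directly apply. I would first try to use Ikeda's explicit formula for \(F_p(B;X)\) at ramified \(p\), together with its functional equation, for the hyperbolic \(B\). If that route is awkward, the fallback is to compute \(\int_{\Her_m(\bbQ_p)}f^\circ_s(J_mn_m(Z))\psi_p(\tr(BZ))\,dZ\) directly. One would reduce via the Iwasawa decomposition to a lattice-counting sum whose leading term, coming from \(Z\) in the dual of \(\Her_m(\calO)\), is \(1\), and then check that the normalization factor does not cancel it at \(s=0\).
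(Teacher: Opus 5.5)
Your proposal is correct and is essentially the paper's proof. You get one-dimensionality from the Iwasawa decomposition with respect to \(K_{K,p}^{(m)}\), then place the spherical line in \(A_{m,p}^{+}(\zeta_p^-)\): the degenerate Whittaker functional for a split unimodular (hyperbolic, sign \(+1\)) Hermitian matrix is nonzero on that line and vanishes on \(A_{m,p}^{-}(\zeta_p^-)\) by \cite[Proposition~5.4(2),(4)]{Yamana2020HilbertHermitian}. The paper takes the ramified-prime nonvanishing from \cite[equation~(9.1) and Lemma~9.2]{Yamana2020HilbertHermitian}, where the Siegel series polynomial of such a matrix equals \(1\); this is the same computation you planned to extract from Ikeda's formulas.
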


\begin{proof}
  Since \(\zeta_p^-\) is unramified and unitary,
  the Iwasawa decomposition in~\cite[\S~5]{Yamana2020HilbertHermitian} shows that evaluation at the identity identifies the space of \(K_{K,p}^{(m)}\)-fixed sections with \(\bbC\).
  Suppose that \(I_{m,p}(\zeta_p^-)\) is reducible.
  For a split unimodular \(H_m\in\Her_m(\bbQ_p)\),
  the results in~\cite[Proposition~5.4(2), (4), equation~(9.1), and Lemma~9.2]{Yamana2020HilbertHermitian} show that \(w_{H_m,p}^{\zeta_p^-}\) is nonzero on \(I_{m,p}(\zeta_p^-)^{K_{K,p}^{(m)}}\),
  but vanishes on \(A_{m,p}^-(\zeta_p^-)\).
  The one-dimensional fixed space therefore lies in \(A_{m,p}^+(\zeta_p^-)\),
  proving the remaining assertions.
\end{proof}

At a split prime,
the local representation is spherical and its \(K_{K,p}^{(m)}\)-fixed space is one-dimensional.
At a nonsplit prime,
the calculation at good places for \(p\nmid D\) and Lemma~\ref{lem:Q-full-level-local} for \(p\mid D\) show that the \(K_{K,p}^{(m)}\)-fixed line belongs to the \(+\)-constituent whenever the local principal series is reducible.
Since~\eqref{eq:Q-local-type-specialization} gives \(\Sigma_2=\Sigma_{\pr,2}\),
put
\[
  \delta^\circ=(1)_{p\in\Sigma_2}\in\frakS.
\]
The preceding calculations of local fixed vectors therefore give
\begin{equation}
  \label{eq:Q-full-level-local-selection}
  \dim_\bbC\rho_m(\delta)^{K_{K,\bfh}^{(m)}}
  =
  \begin{cases}
    1 & (\delta=\delta^\circ),    \\
    0 & (\delta\neq\delta^\circ).
  \end{cases}
\end{equation}

\begin{thm}
  \label{thm:Q-full-level}
  Let \(m=2n\).
  Then
  \[
    \dim_\bbC
    \Pi_{2n,1}(f)^{K_{K,\bfh}^{(2n)}}
    =
    \begin{cases}
      0
       & (n\text{ is odd and }f\text{ has CM by }K), \\[2pt]
      1
       & (\text{otherwise}).
    \end{cases}
  \]
\end{thm}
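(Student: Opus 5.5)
The plan is to combine Theorem~\ref{thm:exact-finite-decomposition} with the fixed-vector computation~\eqref{eq:Q-full-level-local-selection}. With \(S=1\), Theorem~\ref{thm:exact-finite-decomposition} gives
\[
  \Pi_{2n,1}(f)\simeq\bigoplus_{\delta\in\frakS_0}\rho_{2n}(\delta^{1}),
\]
and \(\delta^{1}=\delta\), since \(\chi_v(1)=1\). Taking \(K_{K,\bfh}^{(2n)}\)-invariants commutes with this algebraic direct sum. By~\eqref{eq:Q-full-level-local-selection}, the invariants therefore have dimension \(1\) if \(\delta^\circ\in\frakS_0\) and \(0\) otherwise. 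The whole statement thus reduces to deciding when \(\delta^\circ=(1)_{p\in\Sigma_2}\) lies in \(\frakS_0\).

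For this I apply Lemma~\ref{lem:global-sign-condition} with \(d=1\) and \(m=2n\). By~\eqref{eq:Q-local-type-specialization} we have \(\Sigma_1=\Sigma_{\pr,\irr}\), and \(\Sigma_2=\Sigma_{\pr,2}\). If \(\Sigma_1\neq\varnothing\), then \(\frakS_0=\frakS\), so \(\delta^\circ\in\frakS_0\) and the dimension is \(1\). If \(\Sigma_1=\varnothing\), the condition becomes
\[
  1=\prod_{p\in\Sigma_2}\delta^\circ_p=(-1)^{2n(2n+1)/2}=(-1)^{n(2n+1)}=(-1)^n.
\]
So \(\delta^\circ\in\frakS_0\) exactly when \(n\) is even. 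It remains to show that \(\Sigma_1=\varnothing\) if and only if \(f\) has CM by \(K\).

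One direction is Remark~\ref{rem:global-CM-local-types}. For the converse, suppose \(\Sigma_1=\varnothing\). Then every nonsplit prime \(p\) has \(\xi_p=\chi_p\mu_p^2=\chi_p\), so \(\mu_p^2=\one\), and therefore \(\pi_{f,p}\simeq\pi_{f,p}\otimes\chi_p\). Indeed, \(\{\mu_p,\chi_p\mu_p^{-1}\}\) is stable under twisting by \(\chi_p\), because \(\mu_p^{-1}=\mu_p\). At split primes \(\chi_p\) is trivial. Hence \(\pi_f\simeq\pi_f\otimes\chi\) at all finite places, and strong multiplicity one gives \(\pi_f\simeq\pi_f\otimes\chi_{K/\bbQ}\), i.e.\ \(f\) has CM by \(K\).

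I expect the main obstacle to be the care at the ramified primes \(p\mid D\). There \(\xi_p=\chi_p\mu_p^2\) with \(\mu_p\) unramified, so \(\xi_p\) is ramified. It can therefore equal \(\chi_p\) only when \(\mu_p^2=\one\), and otherwise \(p\in\Sigma_{\pr,\irr}\). The argument above handles this uniformly, but I should double-check that the membership of \(p\mid D\) in \(\Sigma_1\) versus \(\Sigma_2\) is correctly read off from~\eqref{eq:Q-xi-specialization}, and also recheck the sign exponent \((-1)^{dm(m+1)/2}\) with \(d=1\). The case split then yields exactly the stated dichotomy: the dimension vanishes precisely when \(n\) is odd and \(f\) has CM by \(K\).
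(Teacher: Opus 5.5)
Your proposal is correct and follows the paper's proof. You reduce via Theorem~\ref{thm:exact-finite-decomposition} and \eqref{eq:Q-full-level-local-selection} to whether \(\delta^\circ\in\frakS_0\), then apply Lemma~\ref{lem:global-sign-condition}, using Remark~\ref{rem:global-CM-local-types} in the CM case and strong multiplicity one (via \(\mu_p^2=\one\) at nonsplit primes) to get \(\Sigma_1\neq\varnothing\) in the non-CM case. The only cosmetic difference is that you split cases on whether \(\Sigma_1\) is empty rather than on whether \(f\) has CM by \(K\); these are equivalent by exactly those two arguments.
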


\begin{proof}
  By Theorem~\ref{thm:exact-finite-decomposition} and~\eqref{eq:Q-full-level-local-selection},
  the fixed space is one-dimensional if \(\delta^\circ\in\frakS_0\) and zero otherwise.

  Suppose first that \(f\) does not have CM by \(K\).
  We claim that \(\Sigma_1\neq\varnothing\).
  Otherwise every nonsplit finite place would lie in \(\Sigma_{\pr,2}\),
  so \(\xi_p=\chi_p\) there.
  By~\eqref{eq:Q-xi-specialization},
  we would have \(\mu_p^2=1\).
  Twisting
  \[
    \pi_{f,p}\simeq
    \Ind_{B_2(\bbQ_p)}^{\GL_2(\bbQ_p)}
    \left(\mu_p\boxtimes\chi_p\mu_p^{-1}\right)
  \]
  by \(\chi_p\) would therefore exchange the two characters at every nonsplit \(p\),
  while \(\chi_p\) is trivial at every split \(p\).
  Strong multiplicity one for \(\GL_2\)~\cite{JacquetShalika1981EulerII} would imply \(\pi_f\simeq\pi_f\otimes\chi\),
  contrary to the assumption.
  Hence \(\Sigma_1\neq\varnothing\),
  and Lemma~\ref{lem:global-sign-condition} shows that \(\delta^\circ\in\frakS_0\).

  Now suppose that \(f\) has CM by \(K\).
  Remark~\ref{rem:global-CM-local-types} and~\eqref{eq:Q-local-type-specialization} give \(\Sigma_2=\{p<\infty\mid K_p/\bbQ_p\text{ is a field}\}\) and \(\Sigma_1=\varnothing\).
  Since \(\delta_p^\circ=1\) for every \(p\in\Sigma_2\),
  Lemma~\ref{lem:global-sign-condition} shows that \(\delta^\circ\in\frakS_0\) if and only if
  \[
    1=(-1)^{m(m+1)/2}=(-1)^n.
  \]
  This holds exactly when \(n\) is even.
\end{proof}

\begin{rem}
  Suppose that \(n\) is odd and \(f\) has CM by an imaginary quadratic field \(K'\).
  The classical component indexed by the identity then vanishes by~\cite[Corollary~15.20]{Ikeda2008Lifting}.
  The adelic lift itself vanishes if and only if \(K'=K\) by~\cite[Corollary~15.21]{Ikeda2008Lifting},
  exactly as in the exceptional case of Theorem~\ref{thm:Q-full-level}.
\end{rem}

For \(T\in\Her_{2n}(\bbQ)_{>0}\),
put
\[
  \gamma(T)=(-D)^n\det T,
\]
and,
for each prime \(p\),
let \(F_p(T,X)\) be the Hermitian Siegel-series polynomial of \cite[\S~2]{Ikeda2008Lifting}.
Following the convention of \cite[(11.1)]{Yamana2020HilbertHermitian},
put
\[
  \widetilde F_p(T,X)
  =
  X^{-\ord_p(\gamma(T))}
  F_p(T,p^{-2n}X^2).
\]

For \(w_p=h_p\otimes\Phi_p \in I_{2n+1,p}(\zeta_p^+)\otimes\overline{\omega_{1,p}}\),
put
\[
  \calJ_{T,p}^{1}(w_p)
  =
  \int_{M_{1,2n}(K_p)}
  \overline{\Phi_p(x)}\,
  w_{1\oplus T,p}^{\zeta_p^+}\!\left(
  I_{2n+1,p}(\zeta_p^+)
  (u_{2n+1,1}(x,0,0))h_p
  \right)
  d\mu_{1,p}(x).
\]
We write \(\doteq\) for equality up to a nonzero scalar independent of \(T\).
By \cite[Corollary~6.4]{Yamana2020HilbertHermitian},
together with the split case in \cite[Appendix~B]{Yamana2020HilbertHermitian},
\begin{equation}\label{eq:Q-global-local-J-factorization}
  \calJ_T^1\!\left({\bigotimes_p}'w_p\right)
  \doteq
  \prod_{p<\infty}
  \chi_p(-1)^n
  \mu_p(\det T)^{-1}
  \calJ_{T,p}^{1}(w_p).
\end{equation}

Assume that \(n\) is even or that \(f\) does not have CM by \(K\),
and choose
\[
  0\neq\psi^K
  \in
  \Pi_{2n,1}(f)^{K_{K,\bfh}^{(2n)}}.
\]
Following \cite{Ikeda2008Lifting},
put
\[
  \Lift^{(2n)}(f)
  =
  I_{2n,1}^{2k+1}(f;\psi^K).
\]

\begin{prop}
  The following assertions hold.
  \begin{enumerate}
    \item The adelic form \(\Lift^{(2n)}(f)\) has level \(K_{K,\bfh}^{(2n)}\) and weight
          \[
            (\ell_{2n},\eta_{2n})=(2k+2n,k+n).
          \]

    \item If \(f\) does not have CM by \(K\),
          its global Arthur parameter is
          \[
            \psi_{2n}(f)=\BC_{K/\bbQ}(\pi_f)[2n].
          \]
          If \(\pi_f\simeq\AI_{K/\bbQ}(\theta)\),
          then
          \[
            \psi_{2n}(f)
            =\theta[2n]\boxplus\theta^c[2n].
          \]
          This is the parameter anticipated in~\cite[\S~18]{Ikeda2008Lifting}.

    \item After scaling \(\psi^K\),
          the classical component indexed by the identity is
          \[
            \Lift^{(2n)}(f)_{I_{2n}}(Z)
            =\sum_{T\in\Her_{2n}(\bbQ)_{>0}}
            |\gamma(T)|^k
            \prod_{p<\infty}\widetilde F_p(T,\alpha_p^{-1})
            e_\infty(\tr(TZ)).
          \]
          Hence \(\Lift^{(2n)}(f)\) is the even-degree Hermitian Ikeda lift of~\cite{Ikeda2008Lifting}.

    \item The standard \(L\)-function satisfies
          \[
            L(s,\Lift^{(2n)}(f),\Std)
            =
            \prod_{i=1}^{2n}
            L\left(s+k+n-i+\frac12,f\right)
            L\left(s+k+n-i+\frac12,f\otimes\chi\right).
          \]
  \end{enumerate}
\end{prop}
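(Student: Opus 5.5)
I will prove the four assertions in order; each reduces to earlier results, and the Fourier-coefficient comparison (3) is the main obstacle.

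\emph{Assertions (1), (2), (4).} For (1), note that \(\psi^K\) is \(K_{K,\bfh}^{(2n)}\)-fixed by choice, and that \(I_{2n,1}^{2k+1}(f;\,\cdot)\) is \(G_{2n}(\bbA_\bfh)\)-equivariant by Proposition~\ref{prop:even-nonvanishing}; hence the level is as claimed. The weight is read off from \(\ell_m=\kappa+m-1=2k+2n\) and \(\eta_m=(\ell_m+\ell_\infty(\vartheta^-))/2\) with \(\vartheta^-=\one\), which gives \(k+n\).

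For (2), specialize \(\psi_{2n}(h)\) using \(\vartheta_{2n}=\vartheta^-=\one\). The packet membership is Corollary~\ref{cor:global-packet-identification}(2).

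For (4), start from the \(L\)-function formula in Corollary~\ref{cor:global-packet-identification}(2), with the twist trivial. Then use \(L(s,\BC_{K/\bbQ}(\pi_f))=L(s,f)L(s,f\otimes\chi)\), convert from the unitary normalization to the classical one by the shift \(s\mapsto s+k\) (since \(\kappa=2k+1\)), and reindex \(j=i-1\). This yields the product \(\prod_{i=1}^{2n}L(s+k+n-i+\tfrac12,\cdot)\).

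\emph{Assertion (3): strategy.} By Proposition~\ref{prop:even-nonvanishing} with \(g_\bfh=1\), the \(T\)-th coefficient is
\[
c_{m,1,\infty}\det(T)^{(2k+2n+1)/2}\calJ_T^1(\psi^K).
\]
By Theorem~\ref{thm:exact-finite-decomposition} and \eqref{eq:Q-full-level-local-selection}, \(\psi^K\) is the unique line in \(\rho_{2n}(\delta^{\circ})\). Rather than working on \(\Pi_{2n,1}(f)\) directly, I will compute with a lift \(w=\bigotimes_p(\phi_p^\circ\otimes\Phi_p^\circ)\) of \(\psi^K\) to \(\widetilde V_{2n,1}(f)\). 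Here \(\phi_p^\circ\) is the normalized \(K_{K,p}^{(2n+1)}\)-fixed section and \(\Phi_p^\circ=1_{M_{1,2n}(\calO_{K_p})}\). Its image is a nonzero \(K_{K,\bfh}^{(2n)}\)-fixed vector: it is nonzero because \(\calJ_{1_{2n}}^1\neq0\) once the local unimodular computations go through. By one-dimensionality it is therefore a multiple of \(\psi^K\). Then \eqref{eq:Q-global-local-J-factorization} reduces the problem to computing \(\calJ_{T,p}^1(\phi_p^\circ\otimes\Phi_p^\circ)\) at each prime.

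\emph{Assertion (3): the local computation.} Since \(u_{2n+1,1}(x,0,0)\in K_{K,p}^{(2n+1)}\) for integral \(x\), each local factor collapses to
\[
\vol(M_{1,2n}(\calO_{K_p}))\,w_{1\oplus T,p}^{\zeta_p^+}(\phi_p^\circ).
\]
By \cite[Lemma~9.2 and (11.1)]{Yamana2020HilbertHermitian}, this degenerate Whittaker value is, up to \(|\det(1\oplus T)|_p\)-powers and a \(T\)-independent constant, the Siegel series \(F_p(1\oplus T,\cdot)\) evaluated at \(\zeta_p^+(\varpi)\). Using \eqref{eq:Q-zeta-specialization} and \(\alpha_p=\mu_p(p)\), this becomes \(\widetilde F_p(1\oplus T,\alpha_p^{-1})\) times \(\mu_p(\det T)\)-type factors. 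The \(\mu_p(\det T)^{-1}\) in \eqref{eq:Q-global-local-J-factorization} is meant to cancel exactly these.

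\emph{Main obstacle.} The key identity to establish is
\[
F_p(1\oplus T,X)=F_p(T,X),
\]
with the exponent \(\ord_p\gamma(1\oplus T)\) matched against \(\ord_p\gamma(T)\). I would try to prove it via Ikeda's reduction formula for Hermitian Siegel series under adjoining a unimodular block, \cite[\S~2]{Ikeda2008Lifting}.

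At \(p\mid D\), the sign factor \(\chi_p(-1)^n\) and the normalization of \(\gamma(T)=(-D)^n\det T\) must also be reconciled. I expect this to require the ramified case of \cite[Lemma~9.2]{Yamana2020HilbertHermitian} applied to \(\zeta_p^+\), which is ramified there.

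\emph{Assembling (3).} Finally, the archimedean power combines with the \(p\)-adic powers via the product formula to give
\[
|\gamma(T)|^k=D^{nk}\det(T)^k,
\]
with \(D^{nk}\) absorbed into the scaling of \(\psi^K\). Comparing the resulting expansion with Ikeda's formula in \cite{Ikeda2008Lifting} identifies \(\Lift^{(2n)}(f)\) with his lift.
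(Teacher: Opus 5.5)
Your arguments for (1), (2) and (4) are fine and match the paper. The gap is in (3): the vector $w=\bigotimes_p(\phi_p^\circ\otimes\Phi_p^\circ)$ you want to compute with does not exist.

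At $p\mid D$ one has $\zeta_p^+=\chi_{-,p}\cdot(\mu_p\circ N_{K_p/\bbQ_p})$, and $\chi_{-,p}|_{\bbQ_p^\times}=\chi_p$ is ramified. So $\zeta_p^+$ is nontrivial on $\bbZ_p^\times\subset\calO_{K_p}^\times$; you note yourself that $\zeta_p^+$ is ramified there. Now $m_{2n+1}(A)\in K_{K,p}^{(2n+1)}$ for every $A\in\GL_{2n+1}(\calO_{K_p})$. Hence any $K_{K,p}^{(2n+1)}$-fixed $\phi\in I_{2n+1,p}(\zeta_p^+)$ satisfies $\phi(1)=\zeta_p^+(\det A)\phi(1)$ for all such $A$. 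This forces $\phi(1)=0$, and then $\phi=0$ by the Iwasawa decomposition.

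So at the ramified primes there is no spherical section upstairs. Several of your steps then have nothing to apply to: the collapse of $\calJ_{T,p}^1$ to $\vol\cdot w^{\zeta_p^+}_{1\oplus T,p}(\phi_p^\circ)$, the nonvanishing of the image of $w$, and its identification with a multiple of $\psi^K$. Likewise there is no spherical vector for a ramified version of Yamana's Lemma~9.2 to evaluate.

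The missing idea is to do the local computation on the degree-$2n$ side, where $\zeta_p^-=\mu_p\circ N_{K_p/\bbQ_p}$ is unramified at every $p$. Take the $K_{K,p}^{(2n)}$-fixed vector of $I_{2n,p}(\zeta_p^-)$ given by Lemma~\ref{lem:Q-full-level-local}. Choose $w_p^K$ merely as some preimage of it under the surjection $\beta_{1,p}^{\chi_-}$ onto the relevant constituent (Theorem~\ref{thm:local-one-block-FJ} at nonsplit $p$, Yamana's Corollary~10.2 at split $p$).

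The Whittaker identity \eqref{eq:FJ-Whittaker-identity-local} then gives $\calJ_{T,p}^1(w_p^K)\doteq|\det T|_p^{1/2}\,w_{T,p}^{\zeta_p^-}\bigl(\beta_{1,p}^{\chi_-}(w_p^K)\bigr)$. Yamana's Lemma~9.2, with $\zeta_p^-(p)=\alpha_p^2$, evaluates the right-hand side as $|\det T|_p^{n+1/2}F_p(T,p^{-2n}\alpha_p^2)$. This works uniformly in $p$ and makes your ``main obstacle'' unnecessary.

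In any case the identity $F_p(1\oplus T,X)=F_p(T,X)$ could not hold literally. Ikeda's $F_p(B,X)$ for $B$ of size $d$ has its functional equation centred at $X=p^{-d}$, so sizes $2n+1$ and $2n$ can only be related after a change of variable.

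Finally, multiplying by $\mu_p(\det T)^{-1}$ produces $|\det T|_p^{n+1/2}\widetilde F_p(T,\alpha_p)$ up to a $T$-independent factor, not $\widetilde F_p(T,\alpha_p^{-1})$. Passing to $\alpha_p^{-1}$ uses Ikeda's functional equation \cite[Lemma~2.2]{Ikeda2008Lifting}, which introduces the factors $\chi_p(\det T)$. These factors cancel only in the product over $p$.
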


\begin{proof}
  Since \(\kappa=2k+1\),
  \(m=2n\),
  and \(\vartheta^-=\one\),
  we have \((\ell_{2n},\eta_{2n})=(2k+2n,k+n)\).
  Thus (1) follows from the choice of \(\psi^K\) and the classical--adelic correspondence.
  Assertion (2) follows from Proposition~\ref{prop:global-parameter-all-cases}.

  We prove (3).
  By Theorem~\ref{thm:Q-full-level},
  \(\Pi_{2n,1}(f)^{K_{K,\bfh}^{(2n)}}\) is one-dimensional.
  For each prime \(p\),
  choose
  \[
    w_p^K
    \in
    I_{2n+1,p}(\zeta_p^+)\otimes\overline{\omega_{1,p}}
  \]
  so that \(\beta_{1,p}^{\chi_-}(w_p^K)\) spans the \(K_{K,p}^{(2n)}\)-fixed line in the \(p\)-component of \(\rho_{2n}(\delta^\circ)\).
  Such a choice is possible by \cite[Proposition~5.4(1) and Corollary~10.2]{Yamana2020HilbertHermitian} at split primes and by Theorem~\ref{thm:local-one-block-FJ} at nonsplit primes.
  Taking the standard unramified vector for almost all \(p\),
  put
  \[
    w^K={\bigotimes_p}'w_p^K\in\widetilde V_{2n,1}(f).
  \]
  Its image spans \(\Pi_{2n,1}(f)^{K_{K,\bfh}^{(2n)}}\);
  after scaling \(\psi^K\),
  we may assume that \(\psi^K\) is this image.

  By \cite[Lemmas~9.2 and~10.1]{Yamana2020HilbertHermitian},
  using \(\zeta_p^-(p)=\alpha_p^2\),
  \[
    \calJ_{T,p}^{1}(w_p^K)
    \doteq
    |\det T|_p^{n+1/2}
    F_p(T,p^{-2n}\alpha_p^2).
  \]
  With the standard choices at almost all primes \(p\),
  the omitted local scalar is equal to \(1\).
  Thus the product of the omitted local scalars is a nonzero scalar independent of \(T\).
  Since \(\mu_p\) is unramified and \(\mu_p(p)=\alpha_p\),
  \eqref{eq:Q-global-local-J-factorization} gives
  \[
    \calJ_T^1(\psi^K)
    \doteq
    \prod_{p<\infty}
    \chi_p(-1)^n
    \alpha_p^{-\ord_p(\det T)}
    |\det T|_p^{n+1/2}
    F_p(T,p^{-2n}\alpha_p^2).
  \]

  By the definition of \(\widetilde F_p\) and its functional equation \cite[Lemma~2.2]{Ikeda2008Lifting},
  the \(p\)-th factor on the right is,
  up to a scalar independent of \(T\),
  \[
    \chi_p(\det T)\,
    |\det T|_p^{n+1/2}
    \widetilde F_p(T,\alpha_p^{-1}).
  \]
  Since \(\prod_{p<\infty}\chi_p(\det T)=1\),
  \[
    \calJ_T^1(\psi^K)
    \doteq
    \prod_{p<\infty}
    |\det T|_p^{n+1/2}
    \widetilde F_p(T,\alpha_p^{-1}).
  \]

  Using~\eqref{eq:even-Fourier-expansion-J},
  the product formula,
  and \(\ell_{2n}=2k+2n\),
  we obtain
  \[
    |\det T|_\infty^{(\ell_{2n}+1)/2}\calJ_T^1(\psi^K)
    \doteq
    |\det T|_\infty^k
    \prod_{p<\infty}\widetilde F_p(T,\alpha_p^{-1}).
  \]
  Since
  \[
    |\gamma(T)|^k
    =
    D^{nk}|\det T|_\infty^k,
  \]
  rescaling \(\psi^K\) gives the Fourier expansion in~(3).

  Assertion~(4) follows from~(2) and the factorization
  \[
    L(s,\BC_{K/\bbQ}(\pi_f))
    =
    L(s,\pi_f)\,
    L(s,\pi_f\otimes\chi).
  \]
\end{proof}
\bibliographystyle{amsplain}
\bibliography{even_hermitian_ikeda}

@book{ArthurClozel1989BaseChange,
  author    = {James Arthur and Laurent Clozel},
  title     = {Simple Algebras, Base Change, and the Advanced Theory of the Trace Formula},
  series    = {Annals of Mathematics Studies},
  volume    = {120},
  publisher = {Princeton University Press},
  address   = {Princeton, NJ},
  year      = {1989},
  doi       = {10.1515/9781400882403}
}

@article{AtobeChidaIbukiyamaKatsuradaYamauchi2023Harder,
  author  = {Hiraku Atobe and Masataka Chida and Tomoyoshi Ibukiyama and Hidenori Katsurada and Takuya Yamauchi},
  title   = {{Harder's} conjecture {I}},
  journal = {Journal of the Mathematical Society of Japan},
  volume  = {75},
  number  = {4},
  pages   = {1339--1408},
  year    = {2023},
  doi     = {10.2969/jmsj/87988798}
}

@article{ChenGan2025TwistedGGP,
  author  = {Rui Chen and Wee Teck Gan},
  title   = {Twisted {Gan--Gross--Prasad} conjecture for certain tempered {$L$}-packets},
  journal = {Journal of the Institute of Mathematics of Jussieu},
  volume  = {24},
  number  = {1},
  pages   = {17--39},
  year    = {2025},
  doi     = {10.1017/S1474748024000197}
}

@article{ChenZou2021LLC,
  author  = {Rui Chen and Jialiang Zou},
  title   = {Local Langlands Correspondence for Unitary Groups via Theta Lifts},
  journal = {Representation Theory},
  volume  = {25},
  pages   = {861--896},
  year    = {2021},
  doi     = {10.1090/ert/588}
}

@article{ChenZou2024BigTheta,
  author  = {Rui Chen and Jialiang Zou},
  title   = {Big Theta Equals Small Theta Generically},
  journal = {Acta Mathematica Sinica, English Series},
  volume  = {40},
  number  = {3},
  pages   = {717--730},
  year    = {2024},
  doi     = {10.1007/s10114-024-3236-5}
}

@article{DukeImamoglu1996Converse,
  author  = {William Duke and {\"O}zlem Imamo{\u{g}}lu},
  title   = {A converse theorem and the {Saito--Kurokawa} lift},
  journal = {International Mathematics Research Notices},
  volume  = {1996},
  number  = {7},
  pages   = {347--355},
  year    = {1996},
  doi     = {10.1155/S1073792896000220}
}

@article{Dummigan2017Congruences,
  author  = {Neil Dummigan},
  title   = {Lifting puzzles and congruences of {Ikeda} and {Ikeda--Miyawaki} lifts},
  journal = {Journal of the Mathematical Society of Japan},
  volume  = {69},
  number  = {2},
  pages   = {801--818},
  year    = {2017},
  doi     = {10.2969/jmsj/06920801}
}

@article{GanIchino2016GrossPrasad,
  author  = {Wee Teck Gan and Atsushi Ichino},
  title   = {The {Gross--Prasad} conjecture and local theta correspondence},
  journal = {Inventiones Mathematicae},
  volume  = {206},
  number  = {3},
  pages   = {705--799},
  year    = {2016},
  doi     = {10.1007/s00222-016-0662-8}
}

@misc{Higashitani2026Periods,
  author = {Jin Higashitani},
  title  = {On the Periods of {Ikeda--Yamana} Lift for the Unitary Group {I}},
  year   = {2026},
  note   = {arXiv:2605.18294v2}
}

@article{Ikeda1994Jacobi,
  author  = {Tamotsu Ikeda},
  title   = {On the theory of {Jacobi} forms and {Fourier--Jacobi} coefficients of {Eisenstein} series},
  journal = {Journal of Mathematics of Kyoto University},
  volume  = {34},
  number  = {3},
  pages   = {615--636},
  year    = {1994},
  doi     = {10.1215/kjm/1250518935}
}

@article{Ikeda2001Siegel,
  author  = {Tamotsu Ikeda},
  title   = {On the lifting of elliptic cusp forms to {Siegel} cusp forms of degree {$2n$}},
  journal = {Annals of Mathematics},
  series  = {2},
  volume  = {154},
  number  = {3},
  pages   = {641--681},
  year    = {2001},
  doi     = {10.2307/3062143}
}

@article{Ikeda2008Lifting,
  author  = {Tamotsu Ikeda},
  title   = {On the lifting of {Hermitian} modular forms},
  journal = {Compositio Mathematica},
  volume  = {144},
  number  = {5},
  pages   = {1107--1154},
  year    = {2008},
  doi     = {10.1112/S0010437X08003643}
}

@article{IkedaYamana2020HilbertSiegel,
  author  = {Tamotsu Ikeda and Shunsuke Yamana},
  title   = {On the lifting of {Hilbert} cusp forms to {Hilbert--Siegel} cusp forms},
  journal = {Annales Scientifiques de l'\'{E}cole Normale Sup\'{e}rieure},
  series  = {4},
  volume  = {53},
  number  = {5},
  pages   = {1121--1181},
  year    = {2020},
  doi     = {10.24033/asens.2442}
}

@article{JacquetShalika1981EulerII,
  author  = {Herv\'e Jacquet and Joseph A. Shalika},
  title   = {On {Euler} products and the classification of automorphic forms. {II}},
  journal = {American Journal of Mathematics},
  volume  = {103},
  number  = {4},
  pages   = {777--815},
  year    = {1981},
  doi     = {10.2307/2374050}
}

@article{Karel1979Functional,
  author  = {Martin L. Karel},
  title   = {Functional equations of {Whittaker} functions on {$p$}-adic groups},
  journal = {American Journal of Mathematics},
  volume  = {101},
  number  = {6},
  pages   = {1303--1325},
  year    = {1979},
  doi     = {10.2307/2374142}
}

@article{Katsurada2017HermitianPeriod,
  author  = {Hidenori Katsurada},
  title   = {On the period of the {Ikeda} lift for {$U(m,m)$}},
  journal = {Mathematische Zeitschrift},
  volume  = {286},
  number  = {1--2},
  pages   = {141--178},
  year    = {2017},
  doi     = {10.1007/s00209-016-1758-y}
}

@article{KatsuradaKawamura2015Period,
  author  = {Hidenori Katsurada and Hisa-aki Kawamura},
  title   = {{Ikeda}'s conjecture on the period of the {Duke--Imamo{\u{g}}lu--Ikeda} lift},
  journal = {Proceedings of the London Mathematical Society},
  series  = {3},
  volume  = {111},
  number  = {2},
  pages   = {445--483},
  year    = {2015},
  doi     = {10.1112/plms/pdv011}
}

@article{KudlaSweet1997Degenerate,
  author  = {Stephen S. Kudla and Sweet, Jr., William J.},
  title   = {Degenerate principal series representations for {$U(n,n)$}},
  journal = {Israel Journal of Mathematics},
  volume  = {98},
  pages   = {253--306},
  year    = {1997},
  doi     = {10.1007/BF02937337}
}

@book{Mok2015Endoscopic,
  author    = {Chung Pang Mok},
  title     = {Endoscopic Classification of Representations of Quasi-Split Unitary Groups},
  series    = {Memoirs of the American Mathematical Society},
  volume    = {235},
  number    = {1108},
  publisher = {American Mathematical Society},
  address   = {Providence, RI},
  year      = {2015},
  doi       = {10.1090/memo/1108}
}

@book{Neukirch1999ANT,
  author    = {J\"urgen Neukirch},
  title     = {Algebraic Number Theory},
  series    = {Grundlehren der mathematischen Wissenschaften},
  volume    = {322},
  publisher = {Springer-Verlag},
  address   = {Berlin},
  year      = {1999},
  doi       = {10.1007/978-3-662-03983-0}
}

@article{Shahidi1981Certain,
  author  = {Freydoon Shahidi},
  title   = {On certain {$L$}-functions},
  journal = {American Journal of Mathematics},
  volume  = {103},
  number  = {2},
  pages   = {297--355},
  year    = {1981},
  doi     = {10.2307/2374219}
}

@article{Shahidi1990Plancherel,
  author  = {Freydoon Shahidi},
  title   = {A proof of {Langlands}' conjecture on {Plancherel} measures; complementary series of {$p$}-adic groups},
  journal = {Annals of Mathematics},
  series  = {2},
  volume  = {132},
  number  = {2},
  pages   = {273--330},
  year    = {1990},
  doi     = {10.2307/1971524}
}

@article{Yamana2020HilbertHermitian,
  author  = {Shunsuke Yamana},
  title   = {On the lifting of {Hilbert} cusp forms to {Hilbert-Hermitian} cusp forms},
  journal = {Transactions of the American Mathematical Society},
  volume  = {373},
  number  = {8},
  pages   = {5395--5438},
  year    = {2020},
  doi     = {10.1090/tran/8096}
}
\end{document}